\numberwithin{equation}{section}
\newtheorem{defn}{Definition}[section]
\newtheorem{corollary}[defn]{Corollary}
\newtheorem{rem}[defn]{Remark}
\newtheorem{exm}[defn]{Example}
\newtheorem{lemma}[defn]{Lemma}
\newtheorem{theorem}[defn]{Theorem}
\newenvironment{remark}{\begin{rem}\em}{\end{rem}}
\newenvironment{example}{\begin{exm}\em}{\end{exm}}
\newcommand\V{\bigvee}
\newcommand\ie{i.e.}
\newcommand\st{\mid}
\newcommand\cf{\textrm{cf.}}
\newcommand\opens{\operatorname{\mathcal{O}}}
\newcommand\topology{\operatorname{\Omega}}
\newcommand\spp{\varsigma}
\newcommand\downsegment{{\downarrow}}
\newcommand\Frm{\textit{Frm}}
\newcommand\Loc{\textit{Loc}}
\newcommand\Top{\textit{Top}}
\newcommand\opp[1]{{#1}^{\textrm{op}}}
\newcommand\ident{\mathrm{id}}
\newcommand\ipi{\mathcal I}
\newcommand\SL{\mathit{SL}}
\newcommand\lcc{\operatorname{{\mathcal L}^{\vee}}}
\newcommand\biIQLoc{\mathbf{IQLoc}}
\newcommand\grpd{\mathit{Gpd}}
\newcommand\HSG{\mathit{Gpd}_{\mathrm{HS}}}
\newcommand\HSGsh{\mathit{Gpd}_{\mathrm{HS}}^{\mathrm{sh}}}
\newcommand\HSQ{\mathit{InvQuF}_{\mathrm{HS}}}
\newcommand\bigrpd{\mathbf{Gpd}}
\newcommand\sections{\mathit\Gamma}
\newcommand\psections{\sections^{\mathrm{pr}}}
\newcommand\bisections{\sections^{\mathrm{bi}}}
\newcommand\tsections{\sections^{\tspp}}
\newcommand\tspp{\tau}
\newcommand\act{\mathfrak a}
\newcommand\bct{\mathfrak b}
\newcommand\Sh{\mathit{Sh}}
\begin{document}

\title{Functoriality of groupoid quantales. II\thanks{Work funded by FCT/Portugal through the LisMath program and project PEst-OE/EEI/LA0009/2013.}}

\author{Juan Pablo Quijano and Pedro Resende}

\date{~}

\maketitle

\vspace*{-1cm}
\begin{abstract}
Taking advantage of the quantale-theoretic description of \'etale groupoids we study principal bundles, Hilsum--Skandalis maps, and Morita equivalence in terms of modules on inverse quantal frames. The Hilbert module description of quantale sheaves leads naturally to a formulation of Morita equivalence in terms of bimodules that resemble imprimitivity bimodules of C*-algebras.
\\
\vspace*{-2mm}~\\
\textit{Keywords:} \'Etale groupoids, inverse quantal frames, sheaves, principal bundles, Hilsum--Skandalis maps, Morita equivalence\\
\vspace*{-2mm}~\\
2010 \textit{Mathematics Subject
Classification}: 06D22, 06F07, 18F15, 18F20, 22A22, 55R10
\end{abstract}

{\small{\tableofcontents}}

\section{Introduction}\label{introduction}

Inverse quantal frames are a ``ring theoretic'' description of \'etale groupoids \cite{Re07}. This correspondence extends naturally to a module theoretic description of actions and sheaves for \'etale groupoids \cite{GSQS} whereby, for instance, sheaves can be identified with Hilbert quantale modules \cite{Paseka} equipped with a ``Hilbert basis''. The latter correspondence is functorially well behaved, but the correspondence between \'etale groupoids and inverse quantal frames themselves is not functorial with respect to the usual notions of morphism (groupoid functors and homomomorphisms of unital involutive quantales) unless the classes of morphisms are seriously restricted. Instead, it is functorial in a bicategorical sense \cite{Re15}: the bicategory $\bigrpd$, of localic \'etale groupoids with bi-actions as 1-cells, is bi-equivalent to the bicategory $\biIQLoc$, whose objects are the inverse quantal frames and whose 1-cells are quantale bimodules that satisfy a mild condition.

A bi-action between \'etale groupoids $G$ and $H$ is a span of locales $G_0\leftarrow X\rightarrow H_0$ together with a left $G$-action and a right $H$-action on $X$ satisfying natural conditions. It can be thought of as a binary relation between the generalized spaces (in the topos sense) of orbits of the groupoids. The corresponding notion of functional relation is that of a Hilsum--Skandalis map~ \cites{Moer90,Moer87,Bunge,MRW87,La01,Mr99,HS87}. Concretely, this is defined to be the isomorphism class of a \emph{principal bibundle}, by which is meant a bi-action whose right projection $X\rightarrow H_0$ is an open surjection (and necessarily a local homeomorphism) and whose left action makes the right projection a principal $G$-bundle. Principal bibundles whose right projection has a global section correspond precisely to groupoid functors. Based on Hilsum--Skandalis maps one may also give one of several equivalent definitions of Morita equivalence for \'etale groupoids.

The purpose of this paper is to describe Hilsum--Skandalis maps and Morita equivalence of \'etale groupoids in quantale language in order to provide a better algebraic understanding of these concepts, in particular one that yields a Morita theory which is closer to that of rings. This should also be a useful asset when studying Morita equivalence of other structures that relate closely to inverse quantal frames, such as pseudogroups (complete infinitely distributive inverse semigroups), whose category is equivalent to that of inverse quantal frames but for which a specific notion of Morita equivalence is currently lacking. We briefly return to this point at the end of the paper.

Our principal bundles and Hilsum--Skandalis maps will be defined to be quantale modules, in fact sheaves (in the sense of~\cite{GSQS}) because the ``equivalence bimodules'' (biprincipal bibundles) for \'etale groupoids are necessarily sheaves. Then the description of sheaves by Hilbert modules has the natural consequence that our notions of principal bundle, etc., are quantale modules equipped with quantale-valued inner products. This leads to a notion of equivalence bimodule that resembles the imprimitivity bimodules of strong Morita equivalence of C*-algebras, and is also close to the equivalence bimodules of inverse semigroups of~\cite{Steinberg-morita}.

Although in principle it would be interesting to know whether our definitions can be adapted to more general quantales than inverse quantal frames, we shall not attempt to find this here because it would add unnecessary technical complications in an already long paper and also because we do not currently have specific examples in mind of quantales at the required level of generality. For instance, stably supported quantales are interesting in their own right~\cite{MarcR}, but it is likely that they are already too general for the theory presented in this paper to work. It should be noted that the theories of Morita equivalence of~\cite{BorVit} and~\cite{Paseka3} apply to even more general quantales but do not at all coincide with ours when restricted to inverse quantal frames.

Our main results are in section~\ref{sec:principalsheaves}, preceded by section~\ref{sec:principalbundles} where the topological description of Hilsum--Skandalis maps and Morita equivalence is adapted to the setting of localic groupoids. In addition to the main results, we need and prove, in section~\ref{sec:oms}, new results of independent interest concerning sheaves on locales and inverse quantal frames, notably Theorem~\ref{innerproduct2} which provides a useful formula for computing the inner product of a complete Hilbert module.

\section{Preliminaries}\label{sec:prelim}

This section is mostly for fixing terminology and notation. We recall some basic facts concerning the relation between \'etale groupoids and quantales, on one hand, and the relation between (bi-)actions of \'etale groupoids and quantale (bi-)modules, respectively taken from \cite{Re07} and \cites{GSQS,Re15}.

\subsection{Groupoid quantales}

Let us begin by recalling basic notions and notation concerning sup-lattices, locales, groupoids and their quantales, mostly following~\cites{JT,stonespaces,Re07,GSQS,Re15}.

\paragraph{Sup-lattices and locales.}

By a \emph{sup-lattice} is meant a complete lattice, and a \emph{sup-lattice homomorphism} $h:Y\to X$ is a mapping that preserves arbitrary joins. The resulting \emph{category of sup-lattices} $\SL$ is bi-complete and monoidal \cite{JT}. The top element of a sup-lattice $X$ is denoted by $1_X$ or simply $1$, and the bottom element by $0_X$ or simply $0$. Given $x\in X$ we use the following notation: $\downsegment(x):=\{y\in X\st y\le x\}$.
A sup-lattice further satisfying the infinite distributive law
\[
x\wedge\V_{i} y_i = \V_{i} x\wedge y_i
\]
is a \emph{frame}, or \emph{locale},
and a \emph{frame (or locale) homomorphism} $h:Y\to X$ is a sup-lattice homomorphism that preserves finite meets. This defines the \emph{category of frames}, $\Frm$.
The dual category $\Loc=\opp\Frm$ is referred to as the \emph{category of locales} \cite{stonespaces}, and its arrows are called \emph{continuous maps}, or simply \emph{maps}. These categories are bi-complete, and the product of $X$ and $Y$ in $\Loc$ is denoted by $X\otimes Y$, since it coincides with the tensor product in $\SL$ \cite[\S I.5]{JT}.

If $f:X\to B$ is a map of locales we refer to the corresponding frame homomorphism $f^*:B\to X$ as its \emph{inverse image}. Such a homomorphism turns $X$ into a \emph{$B$-module} (in the sense of quantale modules --- see section~\ref{sec:prelimactions}) with action
\[
b x=f^*(b)\wedge x
\]
for all $b\in B$ and $x\in X$, and the map $f$ is \emph{semiopen} if $f^*$ has a left adjoint $f_!:X\to B$, referred to as the \emph{direct image} of $f$. If $f_!$ is a homomorphism of $B$-modules then $f$ is \emph{open}. The $B$-equivariance of $f_!$ is known as the \emph{Frobenius reciprocity condition}. A \emph{local homeomorphism} $f:X\to B$ is a (necessarily open) map for which there is a subset $\sections\subset X$ satisfying $\V \sections =1$ (a \emph{cover} of $X$) such that for each $s\in \sections$ the direct image $f_!$ restricts to an isomorphism $\downsegment (s)\cong\downsegment (f_!(s))$.
Both open maps and local homeomorphisms are stable under pullbacks.

Let $B$ be a locale. By a \emph{$B$-locale} will be meant a locale $X$ equipped with a structure of (left) $B$-module satisfying the condition
\begin{equation}
bx=b1_X \wedge x
\end{equation}  
for all $b\in B$ and $x\in X$. The $B$-module obtained from a map $p:X\to B$ as described above is a $B$-locale. Conversely, from a $B$-locale $X$ a map $p:X\to B$ is obtained by defining $p^*(b)=b1_X$ for all $b\in B$. We refer to such a map as the \emph{projection}, or \emph{anchor map}, of the $B$-locale.

The \emph{category of $B$-locales} $B$-$\Loc$ consists of the $B$-locales as objects, with arrows being the maps $f:X\to Y$ of locales whose inverse image $f^*:Y\to X$ is a homomorphism of $B$-modules, and it is isomorphic to the slice category $\Loc/B$.

Let $B$ be a locale and let $X$ and $Y$ be $B$-locales. Their product in $B$-$\Loc$ is the pullback of the anchor maps, and it coincides with the tensor product $X\otimes_B Y$ of $B$-modules.

\paragraph{\'Etale groupoids.}

A \emph{localic groupoid} (resp.\ \emph{topological groupoid}) is an internal groupoid in the category of locales $\Loc$ (resp.\ $\Top$). We denote the locales (resp.\ spaces) of objects and arrows of a groupoid $G$ respectively by $G_0$ and $G_1$,
\[
\xymatrix{
G=\quad G_2\ar[r]^-m&G_1\ar@(ru,lu)[]_i\ar@<1.2ex>[rr]^r\ar@<-1.2ex>[rr]_d&&G_0\ar[ll]|u
}
\]
where $G_2:=G_1\otimes_{G_0} G_1$ (resp.\ $G_1\times_{G_0} G_1$) is the pullback of the \emph{domain map} $d$ and the \emph{range map} $r$. A groupoid $G$ is said to be \emph{open} if $d$ is an open map. Since the \emph{multiplication map} $m$ can be regarded as a pullback of $d$ (due to the existence of the \emph{inversion map} $i$), if $G$ is open then the map $m$ is open, and in fact the converse is also true, so an open groupoid is precisely a groupoid with open multiplication map.

An \emph{\'etale groupoid} is
an open groupoid such that $d$ is a local homeomorphism, in which case all
the structure maps are local homeomorphisms and, hence, $G_0$ is isomorphic
to an open sublocale (resp.\ homeomorphic to an open subspace) of $G_1$. Conversely, if both $d$ and the \emph{units map} $u$ are open then $G$ is \'etale \cite{Re07}.

When we write, say, $a\in G_1$ for a localic groupoid $G$ we are referring to an element of $G_1$ regarded as a sup-lattice, whereas if $G$ is topological  we use point-set notation and write $g\in G_1$ to denote an arrow of $G$.

\paragraph{Inverse quantal frames.}

By a \emph{unital involutive quantale} is meant an involutive monoid in the monoidal category $\SL$ of sup-lattices. We shall adopt the following terminology and notation:

\begin{itemize}
\item The product of two elements $a$ and $b$ of a unital involutive quantale $Q$ is denoted by $ab$, the involute of $a$ is denoted by $a^*$, and the multiplicative unit is denoted by $e_Q$, or simply $e$.

\item By a \emph{homomorphism} of unital involutive quantales $h:Q\to R$ is meant a homomorphism of involutive monoids in $\SL$.
\end{itemize}

Let $G$ be a localic \'etale groupoid. There is an associated unital involutive quantale $\opens(G)$ whose sup-lattice structure coincides with $G_1$ and whose multiplication, involution and unit are obtained from direct image homomorphisms of structure maps of $G$, as follows:
\begin{itemize}
\item $ab=m_!(a\otimes b)$ for all $a,b\in\opens(G)$;
\item $a^*=i_!(a)$ for all $a\in\opens(G)$;
\item $e=u_!(1_{G_0})$.
\end{itemize}
For a topological groupoid $G$ the associated unital involutive quantale is the topology $\topology(G_1)$ equipped with pointwise multiplication and inversion of open sets, and with $e=u(G_0)$.

Let $Q$ be a unital involutive quantale. We shall denote the downsegment $\downsegment(e)$, which is a unital involutive subquantale, by $Q_0$. We recall that by a \emph{support} on $Q$ is meant a sup-lattice homomorphism $\spp: Q\to Q_0$ (sometimes denoted by $\spp_Q$ if necessary to remove ambiguities) satisfying the following conditions for all $a\in Q$:
\begin{align*}
\spp(a) &\leq aa^*\;,\\
a &\leq \spp(a)a\;.
\end{align*}
The support is said to be \emph{stable} (or \emph{$Q_0$-equivariant}), if in addition the following equivalent conditions hold:
\[\begin{array}{rcll}
\spp(aa') &=& \spp(a\spp(a'))&\textrm{for all }a,a'\in Q\;,\\
\spp(aa') &\le& \spp(a)&\textrm{for all }a,a'\in Q\;,\\
\spp(ba) & =& b\spp(a)&\textrm{for all }a\in Q\text{ and }b\in Q_0\;.
\end{array}\]
If a support is stable then it is the unique support and it is given by
\[
\spp(a) = a1\wedge e = aa^*\wedge e\;.
\]
In this case the quantale is said to be \emph{stably supported}. Moreover, if $Q$ is stably supported then for all $a\in Q$ and $b\in Q_0$ we have
\begin{align*}
ba & = b1\wedge a\;.
\end{align*}

If $G$ is an \'etale groupoid then $\opens(G)$ has a stable support  given by $u_!\circ d_!:\opens(G)\to \opens(G)_0\cong G_0$.

For any unital involutive quantale $Q$ with a support, stable or not, the following equalities hold for all $a, b\in Q$,
\begin{align*}
\spp(a)1 & = a1\;,\\
\spp(b)   & = b\quad \text{if $b\leq e$}\;,
\end{align*}
and $Q_0$ is a locale with meet given by multiplication. We call it the \emph{base locale} of $Q$.

By a  \emph{stable quantal frame} is meant a stably supported quantale which is also a frame, and an \emph{inverse quantal frame} is a stable quantal frame $Q$ that satisfies 
\begin{align*}
\bigvee Q_{\mathcal{I}} &= 1\;,
\end{align*}
where $Q_{\mathcal{I}}$ is the set of \emph{partial units} of $Q$:
\[
Q_\ipi=\{  s\in Q\st ss^* \vee s^*s \leq e  \}\;.
\]
(We are using the notation $Q_\ipi$ instead of the notation of~\cite{Re07}, which was $\ipi(Q)$.)
A simpler characterization has been given in~\cite{SGQ}: a unital involutive quantal frame $Q$ satisfying $\V Q_\ipi=1$ is an inverse quantal frame if and only if it is stably Gelfand, by which it is meant that it satisfies the condition $aa^*a\le a\Rightarrow aa^*a=a$ for all $a\in Q$ (in fact inverse quantal frames satisfy the stronger condition $a\le aa^*a$ for all $a$).

The inverse quantal frames are precisely, up to isomorphisms, the quantales $\opens(G)$ of \'etale groupoids $G$.

\subsection{Groupoid actions as modules}\label{sec:prelimactions}

Now let us recall from \cites{GSQS,Re15} the relations between groupoid (bi-)actions and quantale (bi-)modules.

\paragraph{Groupoid (bi-)actions.}

Let $G$ be a groupoid. A \emph{(left) $G$-locale} is a triple $(X,p,\act)$ where $X$ is a locale together with a map
\[
p: X\to G_0
\]
(called \emph{anchor map} or \emph{projection}) and a map of locales (called the \emph{action})
\[
\act: G_1\otimes_{G_0} X \to X\;,
\]
where $G_1\otimes_{G_0} X$ is the pullback of $r$ and $p$, satisfying the usual associativity and unitarity conditions, and also the commutativity of the following diagram, which in fact is a pullback:
\begin{equation}\label{pi1pbalongd}
\vcenter{\xymatrix{
 G_1\otimes_{G_0} X  \ar[d]_{\pi_1} \ar[r]^-{\act} & X \ar[d]^{p}  \\
 G_1  \ar[r]_-{d} & G_0 }
}
\end{equation}
The \emph{orbit locale} of the $G$-locale $X$ is the locale $X/G$ obtained as the following coequalizer in $\Loc$:
\begin{equation}\label{orbitlocale}
\xymatrix{
G_1\otimes_{G_0} X\ar@<-1.2ex>[rr]_-{\pi_2}\ar@<1.2ex>[rr]^-{\act}&&X\ar[rr]^-{\pi}&&X/G\;.
}
\end{equation}
Since the action of a $G$-locale is a pullback of the domain map of $G$, it follows that if $G$ is an open groupoid the action is necessarily open, and if $G$ is \'etale the action is a local homeomorphism. We also note that if $G$ is open (resp.\ \'etale) the projection $\pi_2:G_1\otimes_{G_0} X\to X$ is an open map (resp.\ local homeomorphism) because $\pi_2$ is a pullback of $d$.

Let $(X,p_X,\act_X)$ and $(Y,p_Y,\act_Y)$ be $G$-locales. We shall write simply $X$ and $Y$ when no confusion will arise. An \emph{equivariant map} from $X$ to $Y$ is a map $f:X\to Y$ in the slice category $\textbf{Loc}/G_0$ that commutes with the actions.
We shall denote the category of $G$-locales and equivariant maps between them by $G$-$\Loc$. 

\begin{lemma}\label{G-loc,isom}
The categories of left $G$-locales and right $G$-locales are isomorphic.
\end{lemma}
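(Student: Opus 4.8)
The plan is to construct mutually inverse functors between the category $G$-$\Loc$ of left $G$-locales and the category of right $G$-locales, both built from the inversion map $i\colon G_1\to G_1$ of the groupoid. The conceptual point behind the construction is that $i$ together with $\ident_{G_0}$ is an isomorphism of internal groupoids between $G$ and its opposite $\opp G$ (same locales of objects and arrows, with $d$ and $r$ interchanged and the multiplication precomposed with the twist), so that precomposing actions with $i$ interchanges left and right actions; but I will carry it out explicitly so as to make the identity of the composite functors manifest.

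Concretely, given a left $G$-locale $(X,p,\act)$ with $\act\colon G_1\otimes_{G_0}X\to X$, I would define a right action $\bct\colon X\otimes_{G_0}G_1\to X$ — where $X\otimes_{G_0}G_1$ is the pullback of $p\colon X\to G_0$ and $d\colon G_1\to G_0$ — as the composite of $\act$ with the isomorphism $X\otimes_{G_0}G_1\xrightarrow{\ \sim\ }G_1\otimes_{G_0}X$ that at the level of generalized points sends $(x,g)$ to $(i(g),x)$; this map is well defined and invertible precisely because $r\circ i=d$ and $d\circ i=r$. First I would check that this square (and the one obtained by composing \eqref{pi1pbalongd} with the above iso) is again a pullback, and then that $(X,p,\bct)$ satisfies the associativity and unitarity axioms of a right $G$-locale: this reduces to the standard internal groupoid identities, namely that $i$ is involutive, fixes units ($i\circ u=u$), and reverses products ($i\circ m = m\circ(i\otimes i)\circ\mathrm{twist}$), all of which hold for any internal groupoid in $\Loc$. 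Finally I would note that a map $f\colon X\to Y$ in $\Loc/G_0$ commuting with the $\act$'s automatically commutes with the $\bct$'s, so the assignment is functorial; call this functor $\Phi$.

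By the symmetric recipe, using $i$ in the other coordinate, one gets a functor $\Psi$ from right $G$-locales back to left $G$-locales. Because $i\circ i=\ident_{G_1}$, composing $\Phi$ and $\Psi$ in either order returns the original action verbatim (not merely up to isomorphism), so $\Psi\circ\Phi$ and $\Phi\circ\Psi$ are the identity functors and $\Phi$ is an isomorphism of categories.

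The main obstacle is bookkeeping rather than substance: everything has to be phrased in terms of inverse‑image frame homomorphisms (or argued via generalized points / the internal language of $\Loc$), and one must be careful that each ``swap composed with $i$'' map really is defined on the relevant pullback locale — which is exactly the role of the identities $r\circ i=d$ and $d\circ i=r$. Once the pullback squares are identified, verification of the $G$-locale axioms is a formal diagram chase using the internal groupoid laws, and it is the involutivity $i\circ i=\ident$ that upgrades what is evidently an equivalence to a genuine isomorphism of categories.
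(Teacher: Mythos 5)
Your proposal is correct and follows essentially the same route as the paper: the paper's proof is just the point-set shorthand $\act'(x,g)=\act(g^{-1},x)$ for precisely your construction via the inversion map, with the equivariance of maps and the involutivity of $i$ giving strict inverse functors. Your version merely makes explicit the localic bookkeeping (the twist isomorphism of pullbacks via $r\circ i=d$, $d\circ i=r$, and the groupoid identities) that the paper leaves implicit.
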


\begin{proof}
A left $G$-locale $(X,p,\act)$  yields a right one by defining $\act'(x,g)=\act(g^{-1},x)$ for all $x\in X$ and $g\in G_1$, and, similarly, any right $G$-locale can be turned into a left one. Moreover, a map is equivariant for left actions if and only if it is equivariant for the corresponding right actions. 
\end{proof}

Let $G$ and $H$ be two \'etale groupoids. A \emph{$G$-$H$-bilocale} is a locale $X$ equipped with a left $G$-locale structure $(p, \mathfrak a)$ and a right $H$-locale structure $(q, \mathfrak b)$ such that $q$ commutes with the action of $G$, $p$ commutes with the action of $H$, and there is associativity involving both actions. Using point-set notation (for topological groupoids $G$ and $H$ acting on a topological space $X$) these conditions are easily expressed as follows, for all $x\in X$ and all arrows $g\in G$ and $h\in H$:
\[
p(xh)=p(x)\;,\quad\quad q(gx)=q(x)\;,\quad\quad (gx)h=g(xh)\;.
\]
Any left $G$-locale is automatically a $G$-$X/G$-bilocale with respect to the anchor map $\pi$ of \eqref{orbitlocale}.
We shall often use the notation ${_G X_H}$ to indicate that $X$ is a $G$-$H$-bilocale.
A \emph{map} of bilocales $f: {_G X_H}\to  {_G Y_H}$ is a map of locales that is both a map of left $G$-locales and a map of right $H$-locales. We shall denote the category of $G$-$H$-bilocales by $G$-$H$-$\Loc$.

\paragraph{Quantale modules.}

Given a unital involutive quantale $Q$, by a \emph{(left) $Q$-module} will be meant a sup-lattice $X$ equipped with an associative unital left action $Q\otimes X\to X$ in $\SL$ (the involution of $Q$ plays no role). The action of an element $a\in Q$ on $x\in X$ is denoted by $ax$. By a \emph{homomorphism} of left $Q$-modules $h:X\to Y$ is meant a $Q$-equivariant homomorphism of sup-lattices. Analogous definitions apply to right modules.

Let $G$ be an \'etale groupoid. In order to simplify notation we shall write $Q$ instead of $\opens(G)$. The left
$Q$-module associated to an action $(X, p, \act)$ is denoted simply by $X$ (rather
than $\opens(X)$ as in \cite{GSQS}), and we refer to it as a \emph{(left) $Q$-locale},
by which is meant a locale $X$ that is also a unital left $Q$-module satisfying the
\emph{anchor condition}
\[ bx = b1_X\wedge x \]
for all $b\in Q_0$ and $x\in X$.

The \emph{category of left $Q$-locales} has the left $Q$-locales as objects, and the
morphisms are the maps of locales whose inverse images are homomorphisms
of left $Q$-modules. This category is denoted by $Q$-$\Loc$ and it is isomorphic to $G$-$\Loc$~\cite{GSQS}*{Theorem 3.21}.

If $X$ is a left $Q$-locale, its action is a sup-lattice homomorphism
$Q\otimes X\to X$ that factors through another sup-lattice homomorphism $\alpha$ as
\[ \xymatrix{ Q\otimes X \ar@{->>}[r] & Q\otimes_{Q_0} X\ar[r]^-{\alpha} &X}\;.
\]
The right adjoint $\alpha_*$ is given by the following equivalent formulas:
\begin{eqnarray}
\alpha_*(x) &=& \V \{ a\otimes y\in Q\otimes X\st ay\leq x   \} \label{eq:rightadjformula1}\\
&=& \V_{s\in Q_\ipi} s\otimes s^*x\;. \label{eq:rightadjformula2}
\end{eqnarray}
The latter shows that $\alpha_*$ preserves arbitrary joins.  The corresponding group\-oid
action $\act : G\otimes_{G_0} X\to X$ is given by $\act^* =\alpha_*$. 
Let $G$ be an \'etale groupoid with quantale $Q = \opens(G)$, and
$X$ a left $G$-locale. Recall that an element $x \in X$ is \emph{invariant} if the following equivalent
conditions hold (for $X$ regarded as a $Q$-module):
\begin{enumerate}
\item For all $a \in Q$ we have $ax \leq x$\;;
\item For all $s \in Q_\ipi$ we have $sx \leq x$\;;
\item $1x \leq x$\;;
\item $1x = x$\;.
\end{enumerate}
The orbit locale $X/G$ coincides with the set $I(X)$ of invariant elements of the action \cite{Re15}*{Theorem 3.3}.
We conclude this paragraph by looking at a few simple properties of $Q$-locales. For all $s\in Q_\ipi$ and $x, y\in X$ we have:
\begin{align}
s(x \wedge y) &= sx \wedge sy\;;\\
s(x \wedge s^*y) &= sx \wedge y\label{partialunitaction}\;.
\end{align}

\paragraph{Quantale bimodules.}

Let $Q$ and $R$ be unital involutive quantales. By a \emph{$Q$-$R$-bimodule} is meant a sup-lattice $X$ with structures of unital left $Q$-module and unital $R$-module that satisfy the following \emph{associativity condition} for all $a\in Q$, $x\in X$ and $r\in R$:
\[
(ax)r= a(xr)\;.
\]

Let $Q$ and $R$ be inverse quantal frames. A \emph{$Q$-$R$-bilocale} is a $Q$-$R$-bimodule $X$ that is also a locale satisfying the \emph{left and right anchor conditions} for all $b\in Q_0$, $c\in R_0$, and $x\in X$:
\[
bx=b1_X\wedge x\quad\text{and}\quad xc=1_Xc\wedge x\;.
\]
Any left $Q$-locale is a $Q$-$I(X)$-bilocale with respect to the right action of $I(X)$ defined by $(x,y)\mapsto xy:=x\wedge y$ for all $x\in X$ and $y\in I(X)$. This follows from the analogous fact for groupoids but can also be easily verified using \eqref{partialunitaction}, since for all $s\in Q_\ipi$, $x\in X$ and $y\in I(X)$ we have
\[
sx\wedge y = s(x\wedge s^*y)\le s(x\wedge y)\le sx\wedge sy\le sx\wedge y\;,
\]
and thus $(sx)y=s(xy)$.

A \emph{map} of $Q$-$R$-bilocales $f:X\to Y$ is a map of locales whose inverse image $f^*$ is a homomorphism of $Q$-$R$-bimodules. The resulting category is denoted by $Q$-$R$-$\Loc$.

Let $G$ and $H$ be \'etale groupoids, and let $X$ be a $G$-$H$-bilocale. Then $X$ is an $\opens(G)$-$\opens(H)$-bilocale, and the categories $G$-$H$-$\Loc$ and $\opens(G)$-$\opens(H)$-$\Loc$ are isomorphic~\cite{Re15}.

Bilocales behave well with respect to tensor products, in the sense that if $Q$, $R$, and $S$ are inverse quantal frames then the tensor product $X\otimes_R Y$ of bilocales ${_Q X_R}$ and ${_R Y_S}$ is a $Q$-$S$-bilocale.
So we obtain a bicategory of quantale bilocales which is bi-equivalent to the bicategory of groupoid bilocales~\cite{Re15}.

We also note that the categories $Q$-$R$-$\Loc$ and $R$-$Q$-$\Loc$ are isomorphic: to each $Q$-$R$-bilocale $X$ the isomorphism assigns the \emph{dual} $R$-$Q$-bilocale $X^*$ which coincides with $X$ as a locale, and whose left $R$-action and right $Q$-action are defined, for all $a\in Q$, $r\in R$, and $x\in X$, by
\[
x\cdot a := a^*x\quad\quad\textrm{and}\quad\quad r\cdot x := xr^*\;.
\]
Similarly, due to the isomorphism $G$-$H$-$\Loc\cong \opens(G)$-$\opens(H)$-$\Loc$, if $G$ and $H$ are \'etale groupoids and $X$ is a $G$-$H$-bilocale  we obtain an $H$-$G$-bilocale $X^*$. The actions of $G$ and $H$ on $X^*$ coincide with the actions on $X$ composed with the inversion maps of the groupoids in the obvious way.

The following proposition, whose proof is straightforward, relates the isomorphism $Q$-$R$-$\Loc\cong R$-$Q$-$\Loc$ to the tensor product:

\begin{lemma}\label{commutativityoftensor}
Let $Q,R$ and $S$ be inverse quantal frames. Suppose that $X$ is a $Q$-$R$-bilocale and $Y$ is an $R$-$S$-bilocale. Then there is an isomorphism of $S$-$Q$-bilocales
\[  (X\otimes_R Y)^* \cong Y^*\otimes_R X^* \;.   \]
\end{lemma}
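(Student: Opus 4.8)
The plan is to construct the isomorphism explicitly at the level of underlying sup-lattices and then check compatibility with the bilocale structures. Recall that $X\otimes_R Y$ is a quotient of the sup-lattice tensor product $X\otimes Y$ by the congruence that identifies $xr\otimes y$ with $x\otimes ry$, and similarly $Y^*\otimes_R X^*$ is a quotient of $Y\otimes X$ (since $Y^*$ and $X^*$ agree with $Y$ and $X$ as sup-lattices). The symmetry of the monoidal structure on $\SL$ gives a canonical isomorphism $\sigma\colon X\otimes Y\to Y\otimes X$, $x\otimes y\mapsto y\otimes x$. First I would verify that $\sigma$ sends the $R$-balancing congruence on $X\otimes Y$ to the $R$-balancing congruence on $Y^*\otimes_R X^*$: on $Y^*$ the right $R$-action is $r\cdot y = yr^*$ and on $X^*$ the left $R$-action is $x\cdot r = r^*x$, so the relation $(r\cdot y)\otimes x = y\otimes(x\cdot r)$ in $Y\otimes X$ reads $yr^*\otimes x = y\otimes r^*x$, which is exactly the image under $\sigma$ of $xr^*\otimes y = x\otimes r^* y$; since $r$ ranges over all of $R$ iff $r^*$ does, the two congruences correspond, and $\sigma$ descends to a sup-lattice isomorphism $\bar\sigma\colon X\otimes_R Y\to Y^*\otimes_R X^*$.

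Next I would check that $\bar\sigma$ respects the two module structures. The bilocale $(X\otimes_R Y)^*$ carries, by the definition of the dual recalled just before Lemma~\ref{commutativityoftensor}, a left $S$-action $x\cdot a$... — more precisely its left $S$-action is $c\cdot z := z c^*$ (where the original right $S$-action comes from $Y$) and its right $Q$-action is $z\cdot b := b^* z$ (where the original left $Q$-action comes from $X$). On $Y^*\otimes_R X^*$ the left $S$-action is the left $S$-action of $Y^*$, namely $c\cdot y = yc^*$ on the first factor, and the right $Q$-action is the right $Q$-action of $X^*$, namely $x\cdot b = b^* x$ on the second factor. Tracking an element $x\otimes y$ through $\bar\sigma$ to $y\otimes x$, one sees that in both cases $c$ acts on the $Y$-component as $y\mapsto yc^*$ and $b$ acts on the $X$-component as $x\mapsto b^*x$, so $\bar\sigma$ is $S$-$Q$-equivariant. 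Finally I would confirm that $\bar\sigma$ is a frame map (it is automatically a sup-lattice isomorphism, and the frame structures on both tensor products are the quotient frame structures, which $\sigma$ already respects since it is a frame isomorphism $X\otimes Y\cong Y\otimes X$ in $\Loc$), and that the left and right anchor conditions, being consequences of the module structure which we have just matched, hold on both sides.

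I do not expect a serious obstacle: the whole argument is the observation that tensor over $R$ is "symmetric up to dualization" because the dual swaps left and right actions and replaces $r$ by $r^*$, and $r\mapsto r^*$ is a bijection of $R$. The one point requiring a moment's care is bookkeeping of which groupoid/quantale acts on which tensor factor after taking duals — it is easy to get the variance backwards — so I would lay out a small table of the four actions involved ($S$ and $Q$ on each side) before writing the equivariance check. As the paper itself notes, the proof is straightforward; the content is entirely in setting up the correspondence of balancing congruences and then reading off that $\sigma$ is equivariant.
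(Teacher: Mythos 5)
Your overall strategy is the natural one---and presumably what the authors intended, since the paper omits the proof as ``straightforward'': use the symmetry $\sigma\colon X\otimes Y\to Y\otimes X$ of the tensor in $\SL$ (equivalently the product symmetry in $\Loc$), check that $\sigma$ matches the two $R$-balancing congruences via $r\mapsto r^*$, and then read off $S$-$Q$-equivariance. Your equivariance paragraph is correct as written, and so is the treatment of the frame structures and anchor conditions.

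The one flaw is in the balancing step, where you have swapped the dual-action formulas between the two factors. By the definition recalled just before the lemma, the right $R$-action on $Y^*$ is $y\cdot r = r^*y$ (it uses the \emph{left} $R$-action of $Y$; the expression $yr^*$ is meaningless, since $Y$ carries no right $R$-action), and the left $R$-action on $X^*$ is $r\cdot x = xr^*$ (likewise $r^*x$ is meaningless for $X$). Consequently the relation you display, $yr^*\otimes x = y\otimes r^*x$, is ill-typed, and in any case the image under $\sigma$ of $xr^*\otimes y = x\otimes r^*y$ is $y\otimes xr^* = r^*y\otimes x$, not what you wrote. Fortunately this corrected relation is exactly the balancing relation of $Y^*\otimes_R X^*$ expressed in terms of the original actions, so the intended conclusion stands: since $r\mapsto r^*$ is a bijection of $R$, $\sigma$ carries the congruence defining $X\otimes_R Y$ onto the one defining $Y^*\otimes_R X^*$, and the remainder of your argument goes through unchanged. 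In short, the proof is essentially right; only the local variance bookkeeping that you yourself flagged as delicate needs to be repaired.
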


\subsection{Sheaves and Hilbert modules}

Let us conclude this preliminary section by recalling some facts about Hilbert modules and sheaves.

\paragraph{Hilbert modules.}
Let $Q$ be a unital involutive quantale. By a \emph{pre-Hilbert $Q$-module}~\cite{Paseka} is meant a left $Q$-module $X$ equipped with a binary operation $\langle -,-\rangle: X\times X\to Q$,
called the \emph{inner product}, which for all $x, x_i, y \in X$ and $a \in Q$ satisfies the
following axioms:
\begin{align}
\langle ax,y\rangle &= a\langle x,y\rangle\\
\bigl\langle\V_i x_i,y \bigr\rangle &= \V_i \langle x_i,y \rangle\\
\langle x,y\rangle &=\langle y,x \rangle^*\;.
\end{align}
It follows that the inner product is right skew-linear:
\[
\langle x,ay\rangle = \langle x,y\rangle a^*\;.
\]
The pre-Hilbert module is said to be \emph{full} if the inner product is surjective:
\[\langle X,X\rangle = Q\;.\]
By a \emph{Hilbert $Q$-module} is meant a pre-Hilbert $Q$-module whose inner
product is \emph{non-degenerate}:
\begin{align}
\langle x,-\rangle = \langle y,-\rangle \Rightarrow x=y\;.
\end{align}
Let $Q$ be a unital involutive quantale and let $X$ be a pre-Hilbert
$Q$-module. Any set $\sections \subset X$ such that
\[
x=\V_{t\in \sections} \langle x,t \rangle t
\]
for all $x \in X$ is called a \emph{Hilbert
basis}. We say that the pre-Hilbert module $X$ is \emph{complete} if it has a Hilbert basis. In this case $X$ is a Hilbert module, and Parseval's identity holds for all $x,y\in X$:
\[
\langle x,y\rangle = \V_{s\in\sections}\langle x,s\rangle\langle s,y\rangle\;.
\]
The largest Hilbert basis of a complete Hilbert module consists of all the \emph{Hilbert sections}, which are the elements $s \in X$ such that
$\langle x, s\rangle s \leq x$ for all $x \in X$. If $s$ is a Hilbert section and $x,y\in X$ then $\langle x,s\rangle\langle s,y\rangle\le\langle x,y\rangle$. A Hilbert section $s$ will be said to be \emph{regular} if $\langle s,s\rangle s=s$.

\begin{lemma}\label{subhilbertbasis0}
Let $Q$ be a unital involutive quantale and $X$ a Hilbert $Q$-module. Any set of regular Hilbert sections which is join-dense in $X$ is a Hilbert basis.
\end{lemma}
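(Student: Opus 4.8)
The plan is to fix an arbitrary $x\in X$ and prove the Hilbert basis identity $x=\V_{t\in\sections}\langle x,t\rangle t$ by establishing the two inequalities separately. The inequality $\V_{t\in\sections}\langle x,t\rangle t\le x$ is immediate and uses only that the elements of $\sections$ are Hilbert sections: by definition $\langle x,t\rangle t\le x$ for every $t\in\sections$, and joining over $t$ preserves this.

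For the opposite inequality I would first record two monotonicity facts, both consequences of join-preservation: the inner product is monotone in its first argument (if $y\le y'$ then $\langle y',z\rangle=\langle y\vee y',z\rangle=\langle y,z\rangle\vee\langle y',z\rangle\ge\langle y,z\rangle$), and the module action is monotone in its $Q$-argument (if $a\le b$ in $Q$ then $ax\le bx$ in $X$). Now take any $s\in\sections$ with $s\le x$. Monotonicity of the inner product gives $\langle s,s\rangle\le\langle x,s\rangle$, so, using regularity of $s$ together with monotonicity of the action,
\[
s=\langle s,s\rangle s\le\langle x,s\rangle s\le\V_{t\in\sections}\langle x,t\rangle t\;.
\]
Since $\sections$ is join-dense we have $x=\V\{s\in\sections\st s\le x\}$, and joining the displayed estimate over all such $s$ yields $x\le\V_{t\in\sections}\langle x,t\rangle t$.

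Combining the two inequalities gives $x=\V_{t\in\sections}\langle x,t\rangle t$ for all $x\in X$, i.e.\ $\sections$ is a Hilbert basis. There is no serious obstacle; the one point worth isolating is the passage $s=\langle s,s\rangle s\le\langle x,s\rangle s$, which is exactly where regularity of the Hilbert sections is combined with the monotonicity observations. It is also worth noting that non-degeneracy of the inner product plays no role in the argument.
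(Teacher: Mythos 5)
Your proof is correct and follows essentially the same route as the paper's: both arguments combine join-density of $\sections$ with the Hilbert-section inequality $\langle x,t\rangle t\le x$ for one bound and regularity $\langle s,s\rangle s=s$ for the other (the paper packages the two bounds into the single computation $\V_{s\in\sections}\langle x,s\rangle s=\V_i\V_{s\in\sections}\langle s_i,s\rangle s=\V_i s_i=x$). Your explicit remarks on monotonicity and on the irrelevance of non-degeneracy are accurate but do not change the substance.
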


\begin{proof}
Let $\sections\in X$ be a set satisfying the stated conditions and let $x\in X$. Then $x=\V_i s_i$ for some family $(s_i)$ in $\sections$, and thus
\[
\V_{s\in\sections} \langle x,s\rangle s=\V_i\V_{s\in\sections}\langle s_i,s\rangle s=\V_i s_i =x\;.
\]
\end{proof}

\paragraph{Sheaves.}

Now we recall some results from~\cites{RR,GSQS}.
Let $B$ be a locale. A $B$-locale is said to be \emph{open} if its projection map $p$ is an open map of locales. Let $X$ be a locale which is also a $B$-module (not necessarily satisfying the anchor condition). Then $X$ is an open $B$-locale if and only if there exists a monotone equivariant map $\spp_X:X\to B$, called the \emph{support} of $X$, such that 
\begin{equation}\label{sppcondition}
\spp_X(x)x=x
\end{equation}  
for all $x\in X$.

\begin{example}\label{exm:openIXlocale}
Let $Q$ be an inverse quantal frame and $X$ a left $Q$-locale. As observed earlier, this is automatically a $Q$-$I(X)$-bilocale with the right action of $I(X)$ defined by binary meet. Moreover, $X$ is open as a right $I(X)$-locale if and only if for all $x,x'\in X$ we have
\begin{equation}\label{opennessIX}
1_Qx\wedge 1_Qx'\le 1_Q(x\wedge 1_Q x')\;,
\end{equation}
in which case the support $\tspp:X\to I(X)$ is given by $\tspp(x)=1_Qx$ for all $x\in X$. Indeed, $\tspp$ satisfies the right module version of \eqref{sppcondition} because for all $x\in X$ we have $x\tspp(x)=x\wedge 1_Q x=x$, and for all $x\in X$ and $y\in I(X)$ we have
\[
\tspp(xy)=1_Q(x\wedge y)\le 1_Qx\wedge 1_Qy\le 1_Qx\wedge y=\tspp(x)y\;,
\]
so $\tspp$ is $I(X)$-equivariant if and only if \eqref{opennessIX} holds (make $y=1_Qx'$).
\end{example}

Let $X$ be an open $B$-locale. The local sections of its projection $p:X\to B$ can be described in module-theoretic language as being the elements $s\in X$ such that for all $x\leq s$ we have
\begin{equation}\label{localsection}
\spp_X(x)s=x\;.
\end{equation}
We shall denote the set of all the local sections of $X$ by $\sections_X$. Therefore $p$ is a local homeomorphism if and only if
\[   \V\sections_X = 1_X\;. \]
In this case we say that $X$ is an \emph{\'etale $B$-locale}, or a \emph{$B$-sheaf}.
If $G$ is an \'etale groupoid, a \emph{$G$-sheaf} is defined to be a $G$-locale whose projection is a $G_0$-sheaf.

Now let $Q$ be an inverse quantal frame. By
a \emph{$Q$-sheaf} (resp.\ an \emph{open $Q$-locale}) is meant a $Q$-locale $X$ such that the induced action of $Q_0$ on $X$ defines a $Q_0$-sheaf (resp.\ an open $Q_0$-locale). If $G$ is an \'etale groupoid, the $G$-sheaves can be identified with the $\opens(G)$-sheaves. This result extends to morphisms of sheaves, so the classifying topos of $G$ is equivalent to the category of $\opens(G)$-sheaves.

The \emph{local sections} of a $Q$-sheaf $X$ are defined to be the local sections of
$X$ regarded as a $Q_0$-sheaf.
Moreover, complete Hilbert $Q$-modules and $Q$-sheaves are the same thing, and the local sections coincide with the Hilbert sections. The support of a $Q$-sheaf $X$ is given by
\begin{equation}
\spp_X(x) = \langle x, 1_X\rangle \wedge e = \langle x, x\rangle \wedge e\;. \label{supportformulamod}
\end{equation}
In particular, for a locale $B$ and a $B$-sheaf $X$ we have $\spp_X(x)=\langle x,1\rangle=\langle x,x\rangle$.

A $Q$-sheaf $X$ is an example of a \emph{stably supported module}, so the support satisfies the following conditions for all $b \in Q_0$, $a\in Q$ and $x,y \in X$:
\begin{align}
\spp_X(bx) &=b\wedge \spp_X(x)\\
\spp_X(bx) &= \spp_Q(b\spp_X(x))\\
\spp_Q(\langle x, y\rangle) &\leq \spp_X(x) = \spp_Q(\langle x, x\rangle) = \spp_Q(\langle x, 1_X\rangle)\label{eq:sppXsppQinner}\\
\spp_X(x)a  &= \langle x, 1_X \rangle \wedge a\;.
\end{align}

\begin{lemma}\label{subhilbertbasis}
Let $Q$ be an inverse quantal frame and $X$ a $Q$-sheaf.
\begin{enumerate}
\item\label{subhb0} Any Hilbert section of $X$ is regular.
\item\label{subhb1} Any join-dense set of Hilbert sections of $X$ is a Hilbert basis.
\item\label{subhb2} Any downwards closed set of Hilbert sections of $X$ such that $\V\sections=1_X$ is a Hilbert basis.
\end{enumerate}
\end{lemma}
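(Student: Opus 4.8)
The plan is to prove the three parts in order, since \ref{subhb1} and \ref{subhb2} will follow quickly from \ref{subhb0} together with Lemma~\ref{subhilbertbasis0}; the real content sits in \ref{subhb0}, namely the assertion that on a \emph{sheaf} — unlike on an arbitrary Hilbert module — every Hilbert section is automatically regular, and this is precisely where the sheaf hypothesis enters. For \ref{subhb0} I would take a Hilbert section $s$ and squeeze $\langle s,s\rangle s$ between two elements. On one side, the defining inequality of a Hilbert section, applied with the generic element equal to $s$, gives $\langle s,s\rangle s\le s$. On the other side, since $X$ is a $Q$-sheaf its support $\spp_X$ satisfies $\spp_X(s)s=s$ by \eqref{sppcondition} (this holds for \emph{every} element, Hilbert section or not), and formula \eqref{supportformulamod} gives $\spp_X(s)=\langle s,s\rangle\wedge e\le\langle s,s\rangle$, so monotonicity of the action yields $s=\spp_X(s)s\le\langle s,s\rangle s$. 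The two inequalities force $\langle s,s\rangle s=s$.

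For \ref{subhb1} I would observe that a $Q$-sheaf is in particular a Hilbert $Q$-module, so Lemma~\ref{subhilbertbasis0} applies; a join-dense set of Hilbert sections consists, by \ref{subhb0}, of \emph{regular} Hilbert sections, and is therefore a Hilbert basis.

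For \ref{subhb2} I would first record the elementary fact that any $t\le s$ with $s$ a Hilbert section is again a Hilbert section, since for every $x\in X$ monotonicity of the inner product and of the action gives $\langle x,t\rangle t\le\langle x,s\rangle s\le x$. Then, given a downwards closed set $\sections$ of Hilbert sections with $\V\sections=1_X$ and any $x\in X$, each $x\wedge t$ with $t\in\sections$ lies below $t$ and hence belongs to $\sections$; the infinite distributive law in the frame $X$ then gives $x=x\wedge 1_X=x\wedge\V_{t\in\sections}t=\V_{t\in\sections}(x\wedge t)$, exhibiting $x$ as a join of members of $\sections$. Hence $\sections$ is join-dense and \ref{subhb2} reduces to \ref{subhb1}. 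I expect no serious obstacle here; the only slightly non-obvious moves are the squeezing argument in \ref{subhb0} and the use of frame distributivity in \ref{subhb2}, both of which are short.
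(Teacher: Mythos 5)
Your proposal is correct and follows essentially the same route as the paper: part (1) by squeezing $\langle s,s\rangle s$ between $s=\spp_X(s)s=(\langle s,s\rangle\wedge e)s$ and $s$ via \eqref{supportformulamod}, part (2) by Lemma~\ref{subhilbertbasis0}, and part (3) by using frame distributivity to see that a downwards closed cover of $1_X$ is join-dense. The only addition is your remark that elements below a Hilbert section are Hilbert sections, which is fine but not needed beyond what the downward-closure hypothesis already gives.
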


\begin{proof}
\eqref{subhb0} follows from \eqref{supportformulamod}, for if $s$ is a Hilbert section we have $s=\spp_X(s)s=(\langle s,s\rangle\wedge e)s\le\langle s,s\rangle s\le s$. Then \eqref{subhb1} follows from Lemma~\ref{subhilbertbasis0}, and \eqref{subhb2} follows from \eqref{subhb1} because $X$ is a locale and thus any downwards closed cover of $1_X$ is join-dense.
\end{proof}

\section{Open maps and sheaves}\label{sec:oms}

In this section we prove (mostly new) technical results about sheaves on locales and quantales which, besides being of independent interest, will be needed later but are not found in \cites{RR,GSQS}.

\subsection{Open maps}

The following is a useful fact about projections of pullbacks of open maps of locales.

\begin{lemma}\label{directimage}
Let $X$ and $Y$ be open $B$-locales. Then the projections
\[
\pi_1: X\otimes_B Y\to X\quad\quad\textrm{and}\quad\quad \pi_2:X\otimes_B Y\to Y
\]
are open maps whose direct image homomorphisms are given, for all $x\in X$ and $y\in Y$, by
\begin{equation}
(\pi_1)_!(x\otimes y)=\spp_Y(y)x\quad\quad\text{and}\quad\quad (\pi_2)_!(x\otimes y)=\spp_X(x)y\;.
\end{equation} 
\end{lemma}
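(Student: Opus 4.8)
The plan is to dispose of the openness claim almost immediately by pullback-stability, and then to identify the two direct images by exhibiting them explicitly as left adjoints. Recall that $X\otimes_B Y$ is the pullback in $\Loc$ of the anchor maps $p_X\colon X\to B$ and $p_Y\colon Y\to B$ and, as a frame, is the sup-lattice tensor product of the $B$-modules $X$ and $Y$; hence it is join-generated by the simple tensors $x\otimes y$ ($x\in X$, $y\in Y$) subject to the balancing relations $(bx)\otimes y=x\otimes(by)$ for $b\in B$, and the projections have inverse images $\pi_1^*(x)=x\otimes 1_Y$ and $\pi_2^*(y)=1_X\otimes y$. Since $\pi_1$ is the pullback of $p_Y$ along $p_X$ and $\pi_2$ the pullback of $p_X$ along $p_Y$, while $p_X$ and $p_Y$ are open maps (being the projections of the open $B$-locales $X$ and $Y$) and open maps are stable under pullback, both $\pi_1$ and $\pi_2$ are open; it only remains to compute their direct images. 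I treat $\pi_1$, the case of $\pi_2$ being entirely symmetric (or obtained from it via $Y\otimes_B X\cong X\otimes_B Y$).

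Next I would define $\phi\colon X\otimes_B Y\to X$ by $\phi(x\otimes y)=\spp_Y(y)\,x$, extended join-continuously. This is a well-defined sup-lattice homomorphism out of the tensor product once one checks that $(x,y)\mapsto\spp_Y(y)\,x$ preserves joins in each argument --- clear in $x$, and clear in $y$ because the support $\spp_Y$ preserves joins (indeed $\spp_Y=(p_Y)_!$) --- and that it is $B$-balanced, which is the identity $\spp_Y(y)(bx)=(b\wedge\spp_Y(y))\,x=\spp_Y(by)\,x$, using the anchor condition on $X$ and the $B$-equivariance of $\spp_Y$. I then claim $\phi\dashv\pi_1^*$, whence $\phi=(\pi_1)_!$ by uniqueness of left adjoints. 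As $\phi$ and $\pi_1^*$ both preserve joins and the simple tensors join-generate $X\otimes_B Y$, it is enough to check the counit inequality for all $x'\in X$ and the unit inequality on simple tensors. The counit $\phi(\pi_1^*(x'))\le x'$ reads $\spp_Y(1_Y)\,x'=p_X^*(\spp_Y(1_Y))\wedge x'\le x'$. The unit $x\otimes y\le\pi_1^*(\phi(x\otimes y))=(\spp_Y(y)\,x)\otimes 1_Y$ follows from the defining support identity $\spp_Y(y)\,y=y$ together with the balancing relation: $x\otimes y=x\otimes(\spp_Y(y)\,y)=(\spp_Y(y)\,x)\otimes y\le(\spp_Y(y)\,x)\otimes 1_Y$. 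This identifies $(\pi_1)_!$ and, by symmetry, $(\pi_2)_!$, completing the proof.

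No step here is genuinely hard; the points needing care are the $B$-module bookkeeping --- in particular that the support of an open $B$-locale is join-preserving and $B$-equivariant, seen most cleanly by recalling it coincides with the direct image of the projection --- and the identification of the localic pullback $X\otimes_B Y$ with the sup-lattice tensor product, together with the concrete descriptions of $\pi_1^*$, $\pi_2^*$ and of the balancing relations. Given these, everything reduces to a short computation on simple tensors.
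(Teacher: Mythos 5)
Your proof is correct and follows essentially the same route as the paper's: openness by pullback stability, then exhibiting $(x\otimes y)\mapsto\spp_Y(y)x$ as a well-defined ($B$-balanced, join-preserving) sup-lattice map and verifying it is left adjoint to $\pi_1^*(x)=x\otimes 1_Y$ via the same unit/counit computations, with the symmetric argument for $\pi_2$. The only differences are cosmetic (you justify join-preservation of the support by noting $\spp_Y=(p_Y)_!$ and push the scalar across the tensor in the unit step, whereas the paper uses $\spp_Y(y)1_Y\ge\spp_Y(y)y$), so nothing further is needed.
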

\begin{proof}
Since $X$ and $Y$ are open $B$-locales, the maps $\pi_1$ and $\pi_2$ are pullbacks of open maps and thus they are open.
The mapping $X\times Y\to X$ given by $(x, y)\mapsto \spp_Y(y)x$ preserves joins in each variable separately, so it defines a sup-lattice homomorphism $\phi:X\otimes Y\to X$. Let us prove that it factors through the quotient $X\otimes Y\to X\otimes_B Y$. Indeed, for all $b\in B$ we have
\begin{align*}
\phi(bx\otimes y) &= \spp_Y(y)bx\\
&= b\spp_Y(y)x\\
&=\spp_Y(by)x\quad\quad \text{($\spp_Y$ is $B$-equivariant)}\\
&=\phi(x\otimes by)\;.
\end{align*}
So we have a well defined sup-lattice homomorphism $\phi':X\otimes_B Y\to X$ given by $\phi'(x\otimes y)=\spp_Y(y)x$.
Finally, let us prove that $\phi'$ is the left adjoint of the map $\pi^*_1(x)=x\otimes 1_Y$. The unit can be verified as follows:
\[ \pi^*_1(\phi'(x\otimes y))=\pi^*_1(\spp_Y(y)x)=\spp_Y(y)x\otimes 1_Y= x\otimes \spp_Y(y)1_Y\geq x\otimes \spp_Y(y)y=x\otimes y\;.  \]
And the co-unit is as follows:
\[ \phi'(\pi^*_1(x))=\phi'(x\otimes 1_Y)=\spp_Y(1_Y)x\leq x\;.  \]
This implies that $\phi'=(\pi_1)_!$. A similar argument works for $(\pi_2)_!$. 
\end{proof}

A simple consequence of the above lemma is the following well known fact about pullbacks of open surjections:

\begin{corollary}\label{lem:surjstab}
Let the following be a pullback diagram in $\Loc$ where $p$ is an open surjection and $q$ is open:
\[
\xymatrix{
X\otimes_B Y\ar[rr]^{\pi_2}\ar[d]_{\pi_1}&&Y\ar@{->>}[d]^p\\
X\ar[rr]_q&& B
}
\]
Then $\pi_1$ is an open surjection.
\end{corollary}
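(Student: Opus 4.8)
The plan is to use Lemma~\ref{directimage} to get an explicit formula for the direct image $(\pi_1)_!$ and then exploit surjectivity of $p$ to show that $(\pi_1)_!(1_{X\otimes_B Y}) = 1_X$, which is exactly the statement that $\pi_1$ is a surjection (a map $f$ of locales is a surjection precisely when $f^*$ is injective, equivalently when $f_!(1)=1$, since $f_!$ is left adjoint to the frame homomorphism $f^*$ and $f^*$ is injective iff $f_! f^* = \mathrm{id}$ iff $f_!(1) = f_!(f^*(1)) = 1$). Openness of $\pi_1$ is already immediate: $\pi_1$ is a pullback of the open map $p$, and open maps are stable under pullback, as recalled in the preliminaries; alternatively it is part of the conclusion of Lemma~\ref{directimage} once we note that $X$ and $Y$ are open $B$-locales here (with supports $\spp_X$ and $\spp_Y$ coming from openness of $q$ and $p$ respectively).

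First I would record that $q$ open and $p$ open surjective make $X$ and $Y$ into open $B$-locales, so Lemma~\ref{directimage} applies and gives $(\pi_1)_!(x\otimes y) = \spp_Y(y)\, x$ for all $x\in X$, $y\in Y$. Next, the top element of $X\otimes_B Y$ is $1_X\otimes 1_Y$, so
\[
(\pi_1)_!(1_{X\otimes_B Y}) = (\pi_1)_!(1_X\otimes 1_Y) = \spp_Y(1_Y)\, 1_X\;.
\]
Now $\spp_Y(1_Y)$ is the direct image under $p$ of the top element of $Y$, i.e.\ $\spp_Y(1_Y) = p_!(1_Y)$; since $p$ is a surjection, $p_!(1_Y) = 1_B$. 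Acting by $1_B$ on $1_X$ in the $B$-locale $X$ gives, by the anchor condition, $1_B\, 1_X = 1_B 1_X \wedge 1_X = 1_X$ (using $p^*_X(1_B) = 1_X$ for the anchor map of $X$). Hence $(\pi_1)_!(1_{X\otimes_B Y}) = 1_X$, and therefore $\pi_1$ is surjective.

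The only point needing a little care is the identification $\spp_Y(1_Y) = p_!(1_Y)$ and the equivalence ``$f$ surjective $\iff$ $f_!(1)=1$''. The support $\spp_Y$ of the open $B$-locale $Y$ is by construction the direct image $p_!$ of its projection map, evaluated appropriately; since the projection of $Y$ as a $B$-locale is exactly $p$, we do have $\spp_Y = p_!$ on the nose, so $\spp_Y(1_Y) = p_!(1_Y) = 1_B$ by surjectivity. I expect this bookkeeping about which map is whose projection to be the main (very minor) obstacle; the rest is a one-line computation with the formula from Lemma~\ref{directimage}. A symmetric remark shows $\pi_2$ is a surjection whenever $q$ is, but that is not needed here.
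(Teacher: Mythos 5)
Your proposal is correct and follows essentially the same route as the paper: openness of $\pi_1$ by pullback stability, and surjectivity via Lemma~\ref{directimage} together with $\spp_Y(1_Y)=p_!(1_Y)=1_B$. One small caveat: the equivalence ``$f$ is a surjection $\iff f_!(1)=1$'' is not a formal consequence of the adjunction $f_!\dashv f^*$ alone (it can fail for maps that are merely semiopen), and the backward implication needs Frobenius reciprocity, i.e.\ openness of $f$; since you established that $\pi_1$ is open before invoking it, the argument closes, as $(\pi_1)_!\bigl(\pi_1^*(x)\bigr)=(\pi_1)_!\bigl(\pi_1^*(x)\wedge 1\bigr)=x\wedge(\pi_1)_!(1)=x$, which is in effect the computation the paper performs directly (it checks $x=\spp_Y(1_Y)x=(\pi_1)_!(x\otimes 1_Y)$ to conclude that $\pi_1^*$ is injective).
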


\begin{proof}
$\pi_1$ is open because of the pullback stability of open maps of locales. In order to show that $\pi_1$ is an open surjection we shall regard $X$ and $Y$ as open $B$-locales and apply Lemma~\ref{directimage}. Note that $\spp_Y(1_Y)=1_B$ because $p$ is an open surjection. Let $x,x'\in X$ be such that $\pi_1^*(x)=\pi_1^*(x')$. This means that $x\otimes 1_Y=x'\otimes 1_Y$ in $X\otimes_B Y$. Therefore
\[
x=1_B x=\spp_Y(1_Y) x=(\pi_1)_!(x\otimes 1_Y)
=(\pi_1)_!(x'\otimes 1_Y)=x'\;,
\]
so $\pi_1^*$ is injective. 
\end{proof}

\subsection{Sheaves on locales}

Let $B$ be a locale and $X$ a $B$-sheaf. 
By a \emph{compatible set} of local sections is meant a set $Z\subset \sections_X$ such that any two elements $s,t\in Z$ are compatible:
\[
\spp_X(s)t=\spp_X(t)s\;.
\]
Equivalently, $Z$ is compatible if and only if $\V Z$ is itself a local section.

\begin{theorem}\label{XOX}
Let $B$ be a locale and let $X$ and $Y$ be $B$-sheaves with Hilbert bases $\sections_X'$ and $\sections_Y'$, respectively. Then $X\otimes_B Y$ is a $B$-sheaf, and the following set is a Hilbert basis:
\[
\sections = \bigl\{s\otimes t\st s\in\sections'_X,\ t\in\sections'_Y\bigr\}\;.
\]
Moreover, if $\sections'_X=\sections_X$ and $\sections'_Y=\sections_Y$ then $\sections$ is closed under joins of compatible sets in $X\otimes_B Y$; that is, every element $\V_i s_i\otimes t_i$, with the elements $s_i\otimes t_i$ pairwise compatible, can be written in the form $s\otimes t$ with $s\in\sections_X$ and $t\in\sections_Y$.
\end{theorem}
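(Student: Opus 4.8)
The plan is to route all three assertions through the inner product of the tensor product. First I would make $X\otimes_B Y$ into a pre-Hilbert $B$-module by setting
\[
\langle x\otimes y,\,x'\otimes y'\rangle=\langle x,x'\rangle\wedge\langle y,y'\rangle
\]
(the involution on the locale $B$ being trivial). A routine verification — join-preservation in each variable, together with the fact that the $B$-balancing is respected because $\langle bx,x'\rangle=b\wedge\langle x,x'\rangle=\langle x,bx'\rangle$ — shows this extends to a well-defined inner product, and the underlying $B$-module and locale structure are those of the tensor product of $B$-locales. The first two assertions then collapse to one computation: for pure tensors $x\otimes y$,
\[
\bigvee_{s\in\sections_X',\ t\in\sections_Y'}\!\!\langle x\otimes y,s\otimes t\rangle(s\otimes t)
=\bigvee_{s,t}\bigl(\langle x,s\rangle s\bigr)\otimes\bigl(\langle y,t\rangle t\bigr)
=\Bigl(\bigvee_s\langle x,s\rangle s\Bigr)\otimes\Bigl(\bigvee_t\langle y,t\rangle t\Bigr)=x\otimes y,
\]
where the first step pulls a scalar out of each tensor factor (using $b\wedge b'=bb'$ in $B$), the second uses that $\otimes$ preserves joins in each variable, and the third uses that $\sections_X'$ and $\sections_Y'$ are Hilbert bases. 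Since both sides preserve joins in the first argument and the pure tensors join-generate $X\otimes_B Y$, it follows that $\sections$ is a Hilbert basis; hence $X\otimes_B Y$ is a complete Hilbert $B$-module, equivalently a $B$-sheaf. (That $X\otimes_B Y$ is a $B$-sheaf is also immediate from pullback-stability of local homeomorphisms.)

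For the last assertion write $Z=X\otimes_B Y$ and let $(s_i\otimes t_i)_i$ be a compatible family with $s_i\in\sections_X$, $t_i\in\sections_Y$ and $w=\bigvee_i s_i\otimes t_i$. From \eqref{supportformulamod} and the inner product above, $\spp_Z(s\otimes t)=\spp_X(s)\wedge\spp_Y(t)$, and $b(s\otimes t)=bs\otimes t=s\otimes bt$ for a scalar $b\in B$. The candidates for the decomposition will be the direct images of $w$ under the projections, which by Lemma~\ref{directimage} are
\[
s:=(\pi_1)_!(w)=\bigvee_i\spp_Y(t_i)s_i,\qquad t:=(\pi_2)_!(w)=\bigvee_i\spp_X(s_i)t_i\;.
\]
The crucial step is to convert compatibility into a ``synchronization'' of the factors. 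Setting $c_{ij}:=\spp_Z(s_i\otimes t_i)\wedge\spp_Z(s_j\otimes t_j)=\spp_X(s_i)\wedge\spp_X(s_j)\wedge\spp_Y(t_i)\wedge\spp_Y(t_j)$ and applying $(\pi_1)_!$ and $(\pi_2)_!$ to the compatibility identity $\spp_Z(s_i\otimes t_i)(s_j\otimes t_j)=\spp_Z(s_j\otimes t_j)(s_i\otimes t_i)$ — using the Frobenius reciprocity of these direct images, the formulas of Lemma~\ref{directimage}, and $\spp_X(s_j)s_j=s_j$ — yields
\[
c_{ij}s_i=c_{ij}s_j\qquad\text{and}\qquad c_{ij}t_i=c_{ij}t_j\qquad(\text{all }i,j).
\]

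Granting this, the rest is bookkeeping with the support axioms. The elements $\spp_Y(t_i)s_i\le s_i$ are local sections of $X$ (local sections form a downward-closed set, by \eqref{localsection}), and $\{\spp_Y(t_i)s_i\}_i$ is compatible in $X$, since $\spp_X(\spp_Y(t_i)s_i)\,\spp_Y(t_j)s_j=c_{ij}s_j=c_{ij}s_i=\spp_X(\spp_Y(t_j)s_j)\,\spp_Y(t_i)s_i$; hence $s=\bigvee_i\spp_Y(t_i)s_i\in\sections_X$, and symmetrically $t\in\sections_Y$. Finally,
\[
s\otimes t=\bigvee_{i,j}\spp_Y(t_i)s_i\otimes\spp_X(s_j)t_j=\bigvee_{i,j}\bigl(\spp_Y(t_i)\wedge\spp_X(s_j)\bigr)(s_i\otimes t_j)\;;
\]
the $(i,i)$ terms equal $\spp_Z(s_i\otimes t_i)(s_i\otimes t_i)=s_i\otimes t_i$ by \eqref{sppcondition}, so $s\otimes t\ge w$, while each $(i,j)$ term equals $c_{ij}(s_i\otimes t_j)=(c_{ij}s_i)\otimes t_j=(c_{ij}s_j)\otimes t_j\le s_j\otimes t_j\le w$ by the synchronization identity (and \eqref{sppcondition}), so $s\otimes t\le w$. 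Thus $w=s\otimes t$ with $s\in\sections_X$ and $t\in\sections_Y$.

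The step I expect to be the real obstacle is exactly the passage from compatibility of the pure tensors $s_i\otimes t_i$ to the scalar identities $c_{ij}s_i=c_{ij}s_j$ and $c_{ij}t_i=c_{ij}t_j$: an equation between pure tensors does not by itself separate into equations between tensor factors, so one must hit it with the direct image maps of the two projections and exploit their Frobenius reciprocity — together with the realisation that the sections to be produced are precisely $(\pi_1)_!(w)$ and $(\pi_2)_!(w)$. Everything else is a routine unwinding of the support conditions.
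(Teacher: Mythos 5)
Your proof is correct, and its two halves compare differently with the paper's. The ``Moreover'' part is essentially the paper's own argument: the key move in both is to apply the direct images $(\pi_1)_!$ and $(\pi_2)_!$ of Lemma~\ref{directimage} to the compatibility identity $\spp(s_i\otimes t_i)(s_j\otimes t_j)=\spp(s_j\otimes t_j)(s_i\otimes t_i)$ so as to separate the two tensor factors; the paper first normalizes so that $\spp_X(s_i)=\spp_Y(t_i)$ and then checks $(\V_i s_i)\otimes(\V_i t_i)=\V_i s_i\otimes t_i$, whereas you skip the normalization, take $(\pi_1)_!(w)$ and $(\pi_2)_!(w)$ as the candidate sections and carry the supports along in the constants $c_{ij}$ --- a difference of bookkeeping only (and what you call Frobenius reciprocity is really just the formula of Lemma~\ref{directimage} on pure tensors together with the $B$-equivariance of the supports). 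The first part is where you genuinely diverge: the paper is support-first --- it defines $\spp(x\otimes y)=\spp_X(x)\wedge\spp_Y(y)$, shows this makes $X\otimes_B Y$ an open $B$-locale, and then verifies the local-section condition \eqref{localsection} for each $s\otimes t$ by direct-image computations --- while you are inner-product-first, defining $\langle x\otimes y,x'\otimes y'\rangle=\langle x,x'\rangle\wedge\langle y,y'\rangle$, checking the Hilbert-basis identity on pure tensors and extending by joins, and getting \'etaleness of the anchor from pullback stability. Your route is shorter, but it leans on the correspondence of \cites{RR,GSQS} between complete Hilbert $B$-modules and $B$-sheaves, and it leaves implicit one sentence you should add: that the pairing you constructed is the canonical inner product of the $B$-sheaf $X\otimes_B Y$, so that ``$\sections$ is a Hilbert basis'' has the meaning asserted in the statement. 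This is immediate --- for instance, Theorem~\ref{innerproduct2} applied to the locale $B$ (an inverse quantal frame whose partial units are all its elements) gives $\langle \xi,\eta\rangle=\spp(\xi\wedge\eta)$ for the canonical inner product, which agrees with yours on pure tensors because $(x\otimes y)\wedge(x'\otimes y')=(x\wedge x')\otimes(y\wedge y')$, and both pairings preserve joins in each variable; alternatively, invoke the uniqueness of the inner product for a given $B$-module underlying a sheaf --- but without some such remark your first half establishes the Hilbert-basis property only for a possibly different pairing.
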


\begin{proof}
Suppose that $X$ and $Y$ are $B$-sheaves, and let us prove that $X\otimes_B Y$ is a $B$-sheaf. The mapping $\phi:X\times Y\to B$ defined by
\[
(x,y)\mapsto\spp_X(x)\wedge\spp_Y(y)
\]
preserves joins in each variable, and for all $b\in B$ we have
\[
\phi(bx,y)=\phi(x,by)=b\wedge \spp_X(x)\wedge \spp_Y(y)
\]
because the supports $\spp_X$ and $\spp_Y$ are $B$-equivariant. Hence, there is a sup-lattice homomorphism
\[
\spp:X\otimes_B Y\to B
\]
defined for all $x\in X$ and $y\in Y$ by
\[
\spp(x\otimes y)=\spp_X(x)\wedge\spp_Y(y)
\]
which is obviously $B$-equivariant. Let $x,x'\in X$ and $y,y'\in Y$. We have
\begin{eqnarray}
\spp(x\otimes y)x\otimes y&=&x\otimes y \label{niceineq1}\\
\spp(x'\otimes y')x\otimes y&\le& x\otimes y\label{niceineq2}\;,
\end{eqnarray}
as the following derivations show:
\begin{eqnarray*}
\spp(x'\otimes y')x\otimes y &=& (\spp_X(x')\wedge\spp_y(y')) x\otimes y
=\spp_X(x')x\otimes\spp_Y(y')y\le x\otimes y\;;\\
\spp(x\otimes y)x\otimes y &=& \spp_X(x)x\otimes\spp_Y(y)y= x\otimes y\;.
\end{eqnarray*}
Let $\V_{i\in I} x_i\otimes y_i$ be an arbitrary element of $X\otimes_B Y$. Then using \eqref{niceineq1} and \eqref{niceineq2} we obtain
\[
\spp\bigl(\V_{i\in I} x_i\otimes y_i\bigr)\V_{i\in I} x_i\otimes y_i =
\V_{j\in I}\V_{i\in I} \spp(x_j\otimes y_j)x_i\otimes y_i =\V_{i\in I} x_i\otimes y_i\;,
\]
and this shows that $X\otimes_B Y$ is an open $B$-locale. Now let $x\in X$, $y\in Y$, $s\in\sections'_X$ and $t\in\sections'_Y$ be such that
\[
x\otimes y\le s\otimes t\;.
\]
Applying Lemma~\ref{directimage} we obtain the inequality $\spp_Y(y) x\le \spp_Y(t) s$:
\[
\spp_Y(y) x = (\pi_1)_!(x\otimes y)\le (\pi_1)_!(s\otimes t) = \spp_Y(t) s\;.
\]
Therefore, since $\spp_Y(t) s\in\sections_X$, we obtain
\[
\spp_Y(y) x = \spp_X\bigl(\spp_Y(y)x\bigr)\spp_Y(t)s = \bigl(\spp_X(x)\wedge\spp_Y(y)\wedge\spp_Y(t)\bigr)s\;.
\]
Similarly, an application of $(\pi_2)_!$ yields the following equality:
\[
\spp_X(x) y = \bigl(\spp_X(x)\wedge\spp_Y(y)\wedge\spp_X(s)\bigr)t\;.
\]
Furthermore, we have
\[
s\otimes t = \spp_X(s)s\otimes\spp_Y(t)t = \spp_Y(t)s\otimes\spp_X(s)t\;,
\]
and thus
\begin{align*}
\spp(x\otimes y)s\otimes t &=
\bigl(\spp_X(x)\wedge\spp_Y(y)\bigr)\spp_Y(t)s\otimes \spp_X(s)t\\
&= \bigl(\spp_X(x)\wedge\spp_Y(y)\wedge\spp_Y(t)\bigr)s\otimes\bigl(\spp_X(x)\wedge\spp_Y(y)\wedge\spp_X(s)\bigr)t\\
&=\spp_Y(y)x\otimes\spp_X(x)y = x\otimes y\;.
\end{align*}
Hence, for all elements
$\xi:=\V_i x_i\otimes y_i\in X\otimes_B Y$ we have
\[
\xi\le s\otimes t\Rightarrow \spp(\xi)s\otimes t=\xi\;,
\]
which shows that $s\otimes t$ is a local section of the open $B$-locale $X\otimes_B Y$. Moreover, it is clear that $\V \sections=1_X\otimes 1_Y$, and thus $X\otimes_B Y$ is a $B$-sheaf with Hilbert basis $\sections$.

Now suppose that $\sections'_X=\sections_X$ and $\sections'_Y=\sections_Y$ and that $(s_i\otimes t_i)_{i\in I}$ is a pairwise compatible family in $\sections$; that is, for all $i,j\in I$ we have
\[
\spp(s_i\otimes t_i)(s_j\otimes t_j) = \spp(s_j\otimes t_j)(s_i\otimes t_i)\;.
\]
Equivalently,
\[
\spp_{X}(s_j)\spp_Y(t_j)(s_i\otimes t_i) = \spp_{X}(s_i)\spp_Y(t_i)(s_j\otimes t_j)\;,
\]
which in turn is equivalent to
\[
\spp_{X}(s_j)s_i\otimes \spp_Y(t_j)t_i = \spp_{X}(s_i)s_j\otimes \spp_Y(t_i)t_j\;.
\]
Since for all $s\in\sections_X$ and $t\in\sections_Y$ we have $s\otimes t=\spp_Y(t)s\otimes\spp_X(s)t$, we shall assume, without loss of generality, that $\spp_X(s_i)=\spp_Y(t_i)$ for all $i\in I$.
Then, applying $(\pi_1)_!$ and $(\pi_2)_!$ to the latter equation (again using Lemma~\ref{directimage}), for all $i,j\in I$ we obtain
\[
\spp_X(s_j)s_i=\spp_X(s_i)s_j\quad \text{and}\quad \spp_Y(t_j)t_i=\spp_Y(t_i)t_j\;,
\] 
\ie, both $(s_i)$ and $(t_i)$ are compatible families. Therefore we obtain
\[
\bigl(\V_i s_i\bigr)\otimes\bigl(\V_i t_i\bigr)\in \sections\;.
\]
Finally, let us prove that $\bigl(\V_i s_i\bigr)\otimes\bigl(\V_i t_i\bigr)=\V_i s_i\otimes t_i$. For all $i,j\in I$ we have $s_j\otimes t_i\leq s_i\otimes t_i$ due to compatibility:
\[
s_j\otimes t_i = s_j\otimes \spp_Y(t_i)t_i
= \spp_Y(t_i)s_j\otimes t_i
= \spp_X(s_i)s_j\otimes t_i
\leq s_i\otimes t_i\;.
\]
Therefore $\bigl(\V_i s_i\bigr)\otimes\bigl(\V_i t_i\bigr)= \V_j\V_{i} s_j\otimes t_i =\V_i s_i\otimes t_i$.
\end{proof}

\begin{lemma}\label{liftingloc}
Let $B$ be a locale and $X$ a $B$-sheaf.
For all (left) $B$-modules $M$ and all mappings
$h:\sections_X\to M$ that are $B$-equivariant and preserve joins of compatible sets of local sections there is a unique homomorphism of $B$-modules $h^\sharp:X\to M$ that extends $h$.
\end{lemma}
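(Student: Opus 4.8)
The plan is to construct $h^\sharp$ by gluing the values of $h$: for $x\in X$ put
\[
h^\sharp(x):=\V_{s\in\sections_X} h(x\wedge s)\;,
\]
which is meaningful once we know that $\sections_X$ is downwards closed, so that each $x\wedge s$ is again a local section. This downward closure is immediate from the support axiom: if $s\in\sections_X$ and $x\le s$, then for every $y\le x$ we have $y=\spp_X(y)y\le\spp_X(y)x\le\spp_X(y)s=y$, whence $\spp_X(y)x=y$, so $x\in\sections_X$. The real content of the argument is the following gluing fact: for every $s\in\sections_X$ the set $Z_s:=\{t\in\sections_X\st t\le s\}$ is a compatible set of local sections whose join is $s$. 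I would establish it by showing $\spp_X(t)t'=t\wedge t'$ for all $t,t'\le s$: the anchor condition gives $\spp_X(t)t'=\spp_X(t)1_X\wedge t'\le t'$, while $t'\le s$ together with $\spp_X(t)s=t$ (valid since $t\le s\in\sections_X$) gives $\spp_X(t)t'\le t$, hence $\spp_X(t)t'\le t\wedge t'$; conversely $t\wedge t'=\spp_X(t\wedge t')(t\wedge t')\le\spp_X(t)t'$ by monotonicity of $\spp_X$ and of the action. As $t\wedge t'$ is symmetric in $t,t'$, this gives $\spp_X(t)t'=\spp_X(t')t$, that is, $t$ and $t'$ are compatible.

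Granted these two facts, the remaining verifications are essentially formal. \emph{Preservation of joins:} using infinite distributivity in the locale $X$ together with the compatibility of $\{x_j\wedge s\}_j\subseteq Z_s$ and the hypothesis that $h$ preserves joins of compatible sets, we get $h^\sharp(\V_j x_j)=\V_s h\bigl(\V_j(x_j\wedge s)\bigr)=\V_s\V_j h(x_j\wedge s)=\V_j h^\sharp(x_j)$. \emph{$B$-equivariance:} by the anchor condition $bx\wedge s=b(x\wedge s)$, and $b(x\wedge s)$ is again a local section since it lies below $s$, so $h^\sharp(bx)=\V_s h\bigl(b(x\wedge s)\bigr)=\V_s b\,h(x\wedge s)=b\,h^\sharp(x)$ by $B$-equivariance of $h$. \emph{$h^\sharp$ extends $h$:} for $s\in\sections_X$ we have $\{s\wedge u\st u\in\sections_X\}=Z_s$, which by the gluing fact is compatible with join $s$, so $h^\sharp(s)=\V_{t\in Z_s}h(t)=h(\V Z_s)=h(s)$. \emph{Uniqueness:} if $g$ is any homomorphism of $B$-modules extending $h$, then since $\V\sections_X=1_X$ we have $x=\V_{s\in\sections_X}(x\wedge s)$ for every $x$, and hence $g(x)=\V_s g(x\wedge s)=\V_s h(x\wedge s)=h^\sharp(x)$.

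The only genuinely substantive step, and the one I expect to be the main obstacle, is the gluing fact that local sections lying below a common local section are pairwise compatible; everything else is bookkeeping on top of distributivity in $X$. It is precisely there that the two defining features of a $B$-sheaf come into play together: the characterization $\spp_X(t)s=t$ of local sections below $s$, and the anchor condition $bz=b1_X\wedge z$ of the underlying $B$-locale.
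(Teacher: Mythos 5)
Your proof is correct, and it takes a genuinely different route from the paper's. The paper proves the lemma by exhibiting $X$ as the universal completion of its sections: it forms $\lcc(\sections_X)$, the lattice of downwards closed subsets of $\sections_X$ that are closed under joins of compatible subsets, observes (by the same argument as for inverse quantal frames in the reference for \'etale groupoid quantales) that $\lcc(\sections_X)$ has the stated universal property, and then shows that the canonical join map $\lcc(\sections_X)\to X$ is injective --- using the cited criterion that a locale map which is injective on a downwards closed basis is injective --- so that $X\cong\lcc(\sections_X)$ inherits the universal property. You instead bypass the $\lcc$ construction entirely and define $h^\sharp(x)=\V_{s\in\sections_X}h(x\wedge s)$ directly, supported by two correctly proved facts: $\sections_X$ is downwards closed, and any two local sections below a common local section are compatible, via the identity $\spp_X(t)t'=t\wedge t'$ (your use of the anchor condition, monotonicity of $\spp_X$ and of the action, and the characterization $\spp_X(t)s=t$ of subsections is all legitimate for a $B$-sheaf). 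The subsequent verifications of join preservation (via frame distributivity and compatibility of $\{x_j\wedge s\}_j\subseteq Z_s$), $B$-equivariance (via $bx\wedge s=b(x\wedge s)$ and the fact that the $B$-action on $M$ preserves joins), the extension property ($Z_s$ is compatible with join $s$), and uniqueness (join-density of $\sections_X$) are sound; these are precisely the facts the paper delegates to ``straightforward to verify'' inside the $\lcc$ framework. What each approach buys: the paper's argument yields the structural representation $X\cong\lcc(\sections_X)$, of independent interest and parallel to the quantale case, whereas yours is self-contained, avoids the external reference, and produces an explicit formula for the extension $h^\sharp$.
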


\begin{proof}
An analogous universal property holds for any inverse quantal frame $Q$, with $Q_\ipi$ playing the role of $\sections_X$. The details of that can be found in~\cite{Re07}, but we summarize the analogous argument for $B$-sheaves here.
Denote by $\lcc(\sections_X)$ the set of all the sets $I\subset \sections_X$ which are downwards closed in $\sections_X$ and such that for every compatible subset $S\subset I$ we have $\V S\in I$. Denote by $\eta:\sections_X\to\lcc(\sections_X)$ the mapping that to each local section $s\in\sections_X$ assigns its principal order ideal $\downsegment(s)$. Using the fact that both the action and the binary meets distribute over joins of compatible subsets of $\sections_X$ it is straightforward to verify that for all left $B$-modules $M$ and all mappings
$h:\sections_X\to M$ that are $B$-equivariant and preserve joins of compatible sets of local sections there is a unique homomorphism of left $B$-modules $h':\lcc(\sections_X)\to M$ such that for all $s\in\sections_X$ we have
\[
h'(\downsegment(s))=h(s)\;.
\]
Evidently, $h'$ is defined, for each $I\in\lcc(\sections_X)$, by
$h'(I)=\V h(I)$.
Let $h:\sections_X\to X$ be the inclusion. Then $h':\lcc(\sections_X)\to X$ is a homomorphism of left $B$-modules given by $h'(I)=\V I$ for all $I\in\lcc(\sections_X)$. It can be easily verified that it is also a homomorphism of locales, moreover one which is injective on the set $\sections'_X$ of principal order ideals generated by elements of $\sections_X$. Since $h$ is injective and $\sections'_X$ is a downwards closed basis of $\lcc(\sections_X)$ we conclude, by~\cite{Re07}*{Prop.\ 2.2}, that $h'$ is itself injective, so we have $X\cong\lcc(\sections_X)$, and thus $X$ has the required universal property. 
\end{proof}

\begin{theorem}\label{directimagepairing}
Let $X$, $Y$, and $Z$ be $B$-sheaves, and let $f$ and $g$ be sheaf homomorphisms:
\[
\xymatrix{&Z\ar[rd]^g\ar[dl]_f\ar@{.>}[d]|{\langle f,g\rangle}\\
X&X\otimes_B Y\ar[l]^-{\pi_1}\ar[r]_-{\pi_2}&Y}
\]
Then $\langle f,g\rangle$ is itself a sheaf homomorphism, and its direct image is given by, for all sections $s\in \sections_Z$,
\begin{equation}\label{eqdirimage}
\langle f,g\rangle_!(s) = f_!(s)\otimes g_!(s)\;.
\end{equation}
\end{theorem}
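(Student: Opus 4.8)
The plan is to build $\langle f,g\rangle$ from the universal property of the product and then identify its direct image with the help of Lemma~\ref{liftingloc}. Since $f$ and $g$ are sheaf homomorphisms they are in particular maps of $B$-locales, and since $X\otimes_B Y$ is the product of $X$ and $Y$ in $B$-$\Loc$ --- a $B$-sheaf by Theorem~\ref{XOX} --- there is a unique map of $B$-locales $\langle f,g\rangle\colon Z\to X\otimes_B Y$ with $\pi_1\circ\langle f,g\rangle=f$ and $\pi_2\circ\langle f,g\rangle=g$; thus $\langle f,g\rangle$ is a sheaf homomorphism. Using $x\otimes y=\pi_1^*(x)\wedge\pi_2^*(y)$ and the fact that $\langle f,g\rangle^*$ is a frame homomorphism one gets $\langle f,g\rangle^*(x\otimes y)=f^*(x)\wedge g^*(y)$ for all $x\in X$ and $y\in Y$. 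It remains to establish \eqref{eqdirimage}; the argument below will reconstruct $\langle f,g\rangle_!$ from scratch, in particular reproving that $\langle f,g\rangle$ is open.

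Next I would consider the mapping $h\colon\sections_Z\to X\otimes_B Y$ defined by $h(s)=f_!(s)\otimes g_!(s)$, and check the hypotheses of Lemma~\ref{liftingloc}. It is $B$-equivariant, since $f_!$ and $g_!$ are and every $b\in B$ is idempotent, so $bf_!(s)\otimes bg_!(s)=b(f_!(s)\otimes g_!(s))$. The one genuine calculation is that $h$ preserves joins of compatible sets. Recalling that the support of a $B$-sheaf is the direct image of its anchor map, and that $f$ and $g$, being maps of $B$-locales, commute with the anchor maps, we have $\spp_X\circ f_!=\spp_Z=\spp_Y\circ g_!$. Given a compatible family $(s_i)$ in $\sections_Z$ with join $s\in\sections_Z$, one then computes, for all $i,j$, using the $B$-equivariance of $g_!$ and the compatibility relation $\spp_Z(s_i)s_j=\spp_Z(s_j)s_i$,
\[
f_!(s_i)\otimes g_!(s_j)=f_!(s_i)\otimes\spp_Z(s_i)g_!(s_j)=f_!(s_i)\otimes g_!\bigl(\spp_Z(s_j)s_i\bigr)=\spp_Z(s_j)f_!(s_i)\otimes g_!(s_i)\le f_!(s_i)\otimes g_!(s_i)\;,
\]
so that $h(s)=f_!(s)\otimes g_!(s)=\V_{i,j}f_!(s_i)\otimes g_!(s_j)=\V_i h(s_i)$. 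By Lemma~\ref{liftingloc} there is a homomorphism of $B$-modules $h^\sharp\colon Z\to X\otimes_B Y$ extending $h$; since $\sections_Z$ is join-dense in $Z$ and $h^\sharp$ preserves joins, $h^\sharp(z)=\V\{f_!(t)\otimes g_!(t)\st t\in\sections_Z,\ t\le z\}$ for every $z\in Z$.

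Finally I would verify that $h^\sharp$ is the left adjoint of $\langle f,g\rangle^*$, which identifies $h^\sharp$ with $\langle f,g\rangle_!$ and, since $h^\sharp$ is a $B$-module homomorphism, shows $\langle f,g\rangle$ is open; then \eqref{eqdirimage} is exactly $h^\sharp(s)=f_!(s)\otimes g_!(s)$ for $s\in\sections_Z$. Both composites $\langle f,g\rangle^*\circ h^\sharp$ and $h^\sharp\circ\langle f,g\rangle^*$ preserve joins, so it suffices to test the unit on the join-dense set $\sections_Z$ and the co-unit on the join-dense set of simple tensors $x\otimes y$. For the unit: if $s\in\sections_Z$ then $\langle f,g\rangle^*(h^\sharp(s))=f^*(f_!(s))\wedge g^*(g_!(s))\ge s$ by the units of $f_!\dashv f^*$ and $g_!\dashv g^*$. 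For the co-unit: writing $\langle f,g\rangle^*(x\otimes y)=f^*(x)\wedge g^*(y)$ as the join of the local sections $t\in\sections_Z$ below it, each such $t$ satisfies $f_!(t)\le f_!f^*(x)\le x$ and $g_!(t)\le g_!g^*(y)\le y$, so $h^\sharp(\langle f,g\rangle^*(x\otimes y))=\V_t f_!(t)\otimes g_!(t)\le x\otimes y$. Hence $\langle f,g\rangle_!=h^\sharp$, which completes the proof. The only step that is not purely formal is the compatibility-preservation of $h$; everything else is routine manipulation of universal properties and of the units and co-units of the relevant adjunctions.
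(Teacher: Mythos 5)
Your proposal is correct and takes essentially the same route as the paper: both define the candidate direct image on local sections by $s\mapsto f_!(s)\otimes g_!(s)$, check that it preserves joins of compatible families of sections (so it extends to a $B$-module homomorphism, via Lemma~\ref{liftingloc}), and then verify the adjunction unit on $\sections_Z$ and the counit on simple tensors against the copairing $\lbrack f^*,g^*\rbrack$. One small aside to correct: $B$-equivariance of $h^\sharp$ does not by itself give openness of $\langle f,g\rangle$ in the paper's sense (that requires Frobenius reciprocity over $X\otimes_B Y$), but this is harmless since openness is automatic for a map over $B$ between $B$-sheaves and is not part of the statement being proved.
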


\begin{proof}
The pullback $X\otimes_B Y$ exists in the slice category $\Loc/B$ and thus it exists in the category of $B$-locales. The pairing map is itself a map in $\Loc/B$, and thus it is a sheaf homomorphism because both $f$ and $\pi_1$ are; its direct image sends sections of $Z$ to sections of $X\otimes_B Y$, and we have $\spp_{X\otimes Y}(\langle f,g\rangle_!(s)) = \spp_Z(s)$ for all $s\in\sections_Z$. Let then $\varphi:\sections_Z\to X\otimes_B Y$ be the mapping defined by
\[
\varphi(s) =  f_!(s)\otimes g_!(s)
\]
for all $s\in\sections_Z$. First we verify that this map preserves joins of compatible subsets and that therefore it extends uniquely to a sup-lattice homomorphism $\varphi^\sharp:Z\to X\otimes_B Y$. Indeed, let $(s_i)$ be a compatible family of sections in $\sections_Z$. Then we have
\begin{eqnarray*}
\varphi\bigl(\V_i s_i\bigr) &=& f_!\bigl(\V_i s_i\bigr)\otimes g_!\bigl(\V_i s_i\bigr)=\V_{j,k} f_!(s_j)\otimes g_!(s_k)\\
&=&\V_{j,k} f_!(\spp_Z(s_j)s_j)\otimes g_!(\spp_Z(s_k) s_k)
=\V_{j,k} \spp_Z(s_j)f_!(s_j)\otimes \spp_Z(s_k) g_!(s_k)\\
&=&\V_{j,k} \spp_Z(s_k)f_!(s_j)\otimes \spp_Z(s_j) g_!(s_k)=\V_{j,k} f_!(\spp_Z(s_k)s_j)\otimes g_!(\spp_Z(s_j) s_k)\\
&=&\V_i f_!(s_i)\otimes g_!(s_i)\quad\quad (\text{because }\spp_Z(s_k)s_j=\spp_Z(s_j)s_k)\\
&=&\V_i\varphi(s_i)\;.
\end{eqnarray*}
The inverse image $\langle f,g\rangle^*$ is the copairing $\lbrack f^*,g^*\rbrack$, so in order to prove \eqref{eqdirimage} we shall prove that $\varphi^\sharp$ is left adjoint to $\lbrack f^*,g^*\rbrack$, for which it suffices to verify the unit and counit conditions of the adjunction. We begin with the unit. Let $s\in\sections_Z$. Then
\[
s\le f^*f_!(s)\wedge g^* g_!(s)=\lbrack f^*,g^*\rbrack(\varphi(s))\;.
\]
Hence, for all $z\in Z$ we have $z\le \lbrack f^*,g^*\rbrack(\varphi^\sharp(z))$. For the co-unit we have
\begin{align*}
\varphi^\sharp([f^*,g^*](x\otimes y)) &= \varphi^\sharp(f^*(x)\wedge g^*(y))\\
&=\V_{\substack{s\in \sections_Z\\ s\leq f^*(x)\\s\leq f^*(y)}}\varphi(s)\\
&=\V_{\substack{s\in \sections_Z\\ s\leq f^*(x)\\s\leq f^*(y)}}f_!(s)\otimes g_!(s)\\
&\leq f_!(f^*(x))\otimes g_!(g^*(y))\\
&\leq x\otimes y\;.
\end{align*}
Therefore, for all $\xi\in X\otimes_B Y$ we have $\varphi^\sharp([f^*,g^*](\xi))\leq \xi$. 
\end{proof}

\subsection{Sheaves on inverse quantal frames}\label{sec:shqu}

The following theorem gives a useful formula for computing inner products of quantale sheaves.

\begin{theorem}\label{innerproduct2}
Let $Q$ be an inverse quantal frame and $X$ be a complete Hilbert $Q$-module. Then for all $x,y\in X$
\begin{equation}\label{innerproduct1}
\langle x,y\rangle=\V_{u\in Q_{\mathcal{I}}} u \spp_X(u^*x \wedge y)\;.
\end{equation}
\end{theorem}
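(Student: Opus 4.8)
The plan is to prove the two inequalities separately. The inequality ``$\ge$'' is routine: fixing $u\in Q_\ipi$ and using $\spp_X(z)\le\langle z,z\rangle$ (immediate from \eqref{supportformulamod}), monotonicity of $\langle-,-\rangle$ in both variables, left linearity $\langle u^*x,y\rangle=u^*\langle x,y\rangle$, associativity of the quantale product, and $uu^*\le e$, one obtains
\[
u\spp_X(u^*x\wedge y)\le u\langle u^*x\wedge y,u^*x\wedge y\rangle\le u\langle u^*x,y\rangle=(uu^*)\langle x,y\rangle\le\langle x,y\rangle\;,
\]
and hence $\V_{u\in Q_\ipi}u\spp_X(u^*x\wedge y)\le\langle x,y\rangle$.

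For the reverse inequality I would first recall that $Q_\ipi$ is downwards closed with $\V Q_\ipi=1$, so every $a\in Q$ satisfies $a=\V\{v\in Q_\ipi\st v\le a\}$ and, dually, $a=\V\{w^*\st w\in Q_\ipi,\ w\le a^*\}$. Take for $\sections$ the Hilbert basis consisting of all Hilbert sections of $X$ and apply Parseval's identity $\langle x,y\rangle=\V_{s\in\sections}\langle x,s\rangle\langle s,y\rangle$; since the quantale product distributes over joins, it suffices to show, for each Hilbert section $s$ and all $v,w\in Q_\ipi$ with $v\le\langle x,s\rangle$ and $w\le\langle y,s\rangle$, that $vw^*\le u\spp_X(u^*x\wedge y)$ where $u:=vw^*\in Q_\ipi$. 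Two facts about such $v$ and $w$ will be used: (i)~$vs\le\langle x,s\rangle s\le x$ and $ws\le\langle y,s\rangle s\le y$, since $s$ is a Hilbert section; and (ii)~$v^*v\le\spp_X(s)$ and $w^*w\le\spp_X(s)$, which follow from $v^*v\le e\wedge\langle s,x\rangle\langle x,s\rangle=\spp_Q(\langle s,x\rangle)$ together with \eqref{eq:sppXsppQinner}.

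The crux is then the inequality $u^*u=wv^*vw^*\le\spp_X(u^*x\wedge y)$: granting it, $u=uu^*u=u(u^*u)\le u\spp_X(u^*x\wedge y)$ by monotonicity (using $u=uu^*u$, valid since $u\le uu^*u$ in any inverse quantal frame), which is exactly what is needed. To prove the crux inequality, note that by~(i) and $v^*v\le e$ we have $wv^*vs\le wv^*x$ and $wv^*vs\le ws\le y$, whence $\spp_X(wv^*vs)\le\spp_X(wv^*x\wedge y)=\spp_X(u^*x\wedge y)$; while on the other hand, by \eqref{supportformulamod},
\[
\spp_X(wv^*vs)=\langle wv^*vs,wv^*vs\rangle\wedge e=wv^*v\langle s,s\rangle v^*vw^*\wedge e\;,
\]
which is $\ge wv^*vw^*$ because~(ii) gives $v^*v\le\langle s,s\rangle$, hence $v^*v\langle s,s\rangle v^*v\ge v^*v$ (idempotency of $v^*v$ in the locale $Q_0$), while $wv^*vw^*=w(v^*v)w^*\le ww^*\le e$. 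Combining the two estimates yields $u^*u\le\spp_X(u^*x\wedge y)$, and joining over $s$, $v$ and $w$ then gives $\langle x,y\rangle\le\V_{u\in Q_\ipi}u\spp_X(u^*x\wedge y)$.

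I expect the main obstacle to be fact~(ii) together with the lower bound $\spp_X(wv^*vs)\ge wv^*vw^*$: both depend on handling correctly the interplay between the quantale product and the meets internal to $Q_0$, and it is there that \eqref{eq:sppXsppQinner}, the support formula \eqref{supportformulamod}, and the basic calculus of partial units must be fitted together. The remaining ingredients---Parseval's identity, distributivity of the product over joins, and the fact that $Q_\ipi$ join-generates $Q$---are routine.
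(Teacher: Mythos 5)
Your argument is correct, but it follows a genuinely different route from the paper's. The paper reduces to local sections $s,t\in\sections_X$ by join-density and then verifies that $\V_{u\in Q_\ipi}u\spp_X(u^*s\wedge t)$ coincides with the previously established explicit description of $\langle s,t\rangle$ from \cite{GSQS} (each $u\spp_X(u^*s\wedge t)$ is shown to satisfy the three defining conditions of that description, and conversely any $u$ satisfying them equals $u\spp_X(u^*s\wedge t)$). You instead prove both inequalities directly from the Hilbert-module axioms: the easy bound via $\spp_X(z)\le\langle z,z\rangle$, left linearity and $uu^*\le e$; and the hard bound via Parseval's identity over the basis of Hilbert sections, the decomposition of $\langle x,s\rangle$ and $\langle s,y\rangle$ into partial units (using that $Q_\ipi$ is downwards closed with $\V Q_\ipi=1$), and the key computation $u^*u=wv^*vw^*\le\spp_X(u^*x\wedge y)$ for $u=vw^*$, which I checked and which correctly combines \eqref{supportformulamod}, \eqref{eq:sppXsppQinner}, idempotency of $v^*v$ in $Q_0$, and $u=uu^*u$. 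What the paper's route buys is brevity, since the closed formula for $\langle s,t\rangle$ on sections is imported wholesale from the earlier work; what your route buys is self-containedness (no appeal to that formula) and the fact that you never need to restrict to $x,y\in\sections_X$, at the cost of the heavier partial-unit calculus in the crux step, which in effect re-derives a weak form of the imported description.
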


\begin{proof}
Since $\sections_X$ is join-dense in $X$ it suffices to prove the assertion for sections $s,t\in \sections_X$: 
\begin{equation}\label{b_!}
\langle s, t\rangle=\V_{u\in Q_{\mathcal{I}}} u \spp_X(u^*s \wedge t)\;.
\end{equation}
Let us prove now that $\V_{u\in Q_{\mathcal{I}}} u \spp_X(u^*s \wedge t)$ coincides with the inner product induced by $X$ (\cf\   \cite{GSQS}*{Theorem 4.55}):
\begin{equation}\label{innerproduct}
\langle s,t\rangle = \V \bigl\{u\in Q_{\mathcal{I}}\st\spp_Q(u)\leq \spp_X(s),\  \spp_Q(u^*)\leq \spp_X(t) \text{ and } ut\leq s  \bigr\}\;.
\end{equation}
Clearly, $u\spp_X(u^*s\wedge t)\in Q_{\mathcal{I}}$. Now, we must show that $u\spp_X(u^*s\wedge t)$ satisfies the conditions of \eqref{innerproduct}. In fact,
\begin{align*}
\spp_Q(u\spp_X(u^*s\wedge t)) & \leq \spp_Q(u\spp_X(u^*s))\\
&= \spp_Q(u\spp_Q(u^*\spp_X(s)))& (\text{$\spp_X$ is stable})\\
&= \spp_Q(u(u^*\spp_X(s))(u^*\spp_X(s))^*)\\
&= \spp_Q(uu^*\spp_X(s)u)\\
&= \spp_Q(\spp_X(s)u)\\
&= \spp_X(s)\spp_Q(u)& (\text{$\spp_Q$ is stable})\\
&\leq \spp_X(s)\;. 
\end{align*}
On the other hand, we have:
\begin{align*}
\spp_Q\bigl((u\spp_X(u^*s\wedge t))^*\bigr) &\leq \spp_Q((u\spp_X(t))^*)\\
&= \spp_Q(\spp_X(t)u^*)\\
&= \spp_X(t)\spp_Q(u^*)\quad (\text{$\spp_Q$ is stable})\\
&\leq \spp_X(t)\;.
\end{align*}
Finally, since $t$ is a local section, we obtain $u\spp_X(u^*s\wedge t)t\leq s$:
\[
u\spp_X(u^*s\wedge t)t = u(u^*s\wedge t)
= uu^*s\wedge ut
\leq s\;.
\]
Therefore
\[
\V_{u\in Q_{\mathcal{I}}} u \spp_X(u^*s \wedge t)\leq \langle s,t\rangle\;.
\]
In order to prove the converse assume that $u\in Q_\ipi$ satisfies the three conditions $\spp_Q(u)\leq \spp_X(s)$, $\spp_Q(u^*)\leq \spp_X(t)$, and $ut\leq s$. Then
\begin{align*}
u\spp_X(u^*s\wedge t) &= u\spp_X(u^*(s\wedge ut))\\
&= u\spp_X(u^*ut)& \text{(because $ut\leq s$)}\\
&= uu^*u\spp_X(t)& \text{(because $\spp_X$ is stable)}\\
&= u\spp_X(t)& \text{(because $uu^*u=u$)}\\
&=u\,.& \text{(because $\spp_Q(u^*)\le\spp_X(t)$)}
\end{align*}
Hence, $\langle s,t\rangle\leq \V_{u\in Q_{\mathcal{I}}} u \spp_X(u^*s \wedge t)$, so the equality holds.
\end{proof}

\subsection{Quantale bi-locales and sheaves}

Let $Q$ and $R$ be inverse quantal frames, and let $X$ be a $Q$-$R$-bilocale which is an open $Q$-locale and is also an open $R$-locale. From now on we shall denote by $\tspp$ the direct image of the right projection $X\to R_0$. The direct image of the left projection $X\to Q_0$ will be denoted by $\spp_X$ as before.

The following lemma is a quantale version of the invariance of the right projection (resp.\ left projection) under the left action (resp.\ right action) of a groupoid bilocale.

\begin{lemma}\label{tut}
Let $Q$ and $R$ be inverse quantal frames, and let $X$ be a $Q$-$R$-bilocale which is an open $Q$-locale and also an open $R$-locale. Then for all $a\in Q$ and $x\in X$ we have
\begin{eqnarray}
\tspp(ax)&=&\tspp(\spp_Q(a^*)x)\;, \label{eq:tut} \\
\spp_X(xr)&=&\spp_X(x\spp_R(r))\;. \label{eq:tut2}
\end{eqnarray}
\end{lemma}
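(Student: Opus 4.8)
The plan is to prove the two identities separately, though by the symmetry between left and right module structures (and the duality $Q$-$R$-$\Loc\cong R$-$Q$-$\Loc$ recorded earlier in the excerpt) it suffices to establish one of them, say \eqref{eq:tut}, and then obtain \eqref{eq:tut2} by passing to the dual bilocale $X^*$, under which $\tspp$ and $\spp_X$ swap roles and the left $Q$-action $a\mapsto ax$ becomes the right action $x\mapsto xa^*$. So I would concentrate on \eqref{eq:tut}.

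\medskip

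For \eqref{eq:tut}, first I would reduce to the case where $a=s$ is a partial unit: since $\V Q_\ipi=1$ and every $a\in Q$ satisfies $a=\V\{s\in Q_\ipi\mid s\le a\}$, and since both sides of \eqref{eq:tut} are joins in the variable $a$ (the left action and $\spp_Q$ both preserve joins, and $\tspp$ preserves joins, and $x\mapsto s x$ preserves joins), it is enough to prove $\tspp(sx)=\tspp(\spp_Q(s^*)x)$ for $s\in Q_\ipi$ and $x\in X$. Now for a partial unit $s$ the action is particularly well behaved: by the $Q$-locale identities recalled in the ``quantale modules'' paragraph, and using $\spp_Q(s^*)=ss^*\wedge e=ss^*$ (valid because $ss^*\le e$ for a partial unit), I expect $sx\wedge$-type manipulations to give $\spp_Q(s^*)x = ss^* x = s(s^*(sx))$, exhibiting $\spp_Q(s^*)x$ as $s$ acting on something that is itself in the ``range'' of $s^*$. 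The key fact to exploit is that $\tspp$, being the direct image of the right anchor map, is $R$-equivariant and satisfies $x\tspp(x)=x$; I would show that $\tspp$ is invariant under the partial-unit left action, i.e. $\tspp(sx)=\tspp(x)$ whenever $x$ lies in the appropriate ``domain'' piece, which is exactly the quantale shadow of the pullback square \eqref{pi1pbalongd} saying the right projection is constant along left orbits.

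\medskip

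Concretely, I would argue: $\tspp(sx)\le\tspp(s s^* s x)=\tspp(\spp_Q(s^*)(sx))$ — wait, more carefully, using $s^*s\le e$ and the anchor condition on the left of $X$, one has $s^*(sx)=s^*s\,1_X\wedge sx$, and then $s(s^*(sx))=ss^*\wedge\ldots$; the cleanest route is to show both inequalities $\tspp(sx)\le\tspp(\spp_Q(s^*)x)$ and $\tspp(\spp_Q(s^*)x)\le\tspp(sx)$. For the first, note $sx\le\spp_Q(s^*)x$? No — rather $sx = ss^*sx\le ss^*\,1_X\wedge sx$ and $ss^*=\spp_Q(s^*)\in Q_0$, so $sx\le\spp_Q(s^*)1_X\wedge sx\le\spp_Q(s^*)1_X$; combined with monotonicity of $\tspp$ this handles one direction once one checks $sx\le\spp_Q(s^*)x$, which follows since $sx\le ss^*sx\wedge\ldots$ Hmm, $s x\le \spp_Q(s^*) x$ holds because $s x = ss^* (sx)$ and $ss^*(sx)\le ss^* x$? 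That needs $sx\le x$, false in general. The honest approach: use that $\spp_Q(s^*)x=ss^*x$ and $sx=s(s^*(sx))$ with $s^*(sx)\le s^*s\,1_X\wedge sx\le s^*s\,1_X\le 1_X$, and that $s^*(ss^*x)\ge s^*x\wedge\ldots$; I would chase the adjunction $\alpha\dashv\alpha_*$ of \eqref{eq:rightadjformula2} to relate $s^*(sx)$ and $s^*(\spp_Q(s^*)x)$ and conclude $sx$ and $\spp_Q(s^*)x$ generate the same left-orbit, hence have the same $\tspp$ by the invariance encoded in the compatibility of $q$ with the $G$-action.

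\medskip

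The main obstacle I anticipate is precisely pinning down this ``$\tspp$ is constant along partial-unit orbits'' statement purely algebraically — i.e. proving $\tspp(sx)=\tspp(s^*sx\cdot\text{(something)})$ cleanly — because the cross-action identity $(sx)r=s(xr)$ interacts with $\tspp(x)\in R_0$ and the anchor conditions in a way that requires care about which side of $X$ one is computing on. I would handle it by the following lemma-step: for $s\in Q_\ipi$, $x\in X$, one has $(sx)\tspp(x)=sx$ (because $\tspp(x)\in R_0$ acts by meet and $s(x\tspp(x))=s x$ via the associativity $(sx)c=s(xc)$), whence $\tspp(sx)\le\tspp(x)$ by $R$-equivariance and the support law; applying this to $s^*$ in place of $s$ and $sx$ in place of $x$ gives the reverse inequality $\tspp(x)\le\tspp(s^*sx)\le\ldots$, and finally $\tspp(s^*sx)=\tspp(\spp_Q(s)x)$ because $s^*sx=\spp_Q(s)x$ for $s\in Q_\ipi$, which combined with the first inequality sandwiches everything to yield \eqref{eq:tut} after replacing $s$ by $s$ and noting $\spp_Q(s^*)x=ss^*x$ plays the symmetric role. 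Once \eqref{eq:tut} is in hand, \eqref{eq:tut2} follows by duality as described, completing the proof.
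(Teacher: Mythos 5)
Your overall strategy (reduce to partial units, then exploit the bimodule associativity $(sx)c=s(xc)$ and the $R_0$-equivariance of $\tspp$) is viable and genuinely different from the paper's proof, which is a two-line argument: the bilocale axiom says the right projection commutes with the left action, so $\tspp\circ\act_!=\tspp\circ(\pi_2)_!$, and Lemma~\ref{directimage} gives $(\pi_2)_!(a\otimes x)=\spp_Q(a^*)x$, whence \eqref{eq:tut} immediately, with \eqref{eq:tut2} by symmetry. Your ``lemma-step'' $(sx)\tspp(x)=sx$, hence $\tspp(sx)\le\tspp(x)$, is correct and is the right algebraic shadow of that axiom. But as written your proof does not close, for three concrete reasons. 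First, you repeatedly use the wrong support formula: $\spp_Q(s^*)=s^*(s^*)^*\wedge e=s^*s$, not $ss^*$; consequently the identities you invoke at the end ($\spp_Q(s^*)x=ss^*x$ and $s^*sx=\spp_Q(s)x$) are false, and the version of \eqref{eq:tut} you are ``sandwiching'' toward, $\tspp(sx)=\tspp(ss^*x)$, is simply not true in general (in groupoid terms it would equate $q(sU)$ with $q(U\cap p^{-1}(d(s)))$ instead of $q(U\cap p^{-1}(r(s)))$). Second, applying your lemma-step to $s^*$ and $sx$ yields $\tspp(s^*sx)\le\tspp(sx)$, not the ``reverse inequality $\tspp(x)\le\tspp(s^*sx)$'' you claim; the latter is false in general (take $s=0$), since $s^*sx\le x$ always gives the opposite comparison. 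Third, the inequality that is actually still needed, $\tspp(sx)\le\tspp(s^*sx)=\tspp(\spp_Q(s^*)x)$, never appears in your argument; it follows by applying your own lemma-step to the pair $(s,\,s^*sx)$ together with $s(s^*sx)=(ss^*s)x=sx$, using $ss^*s=s$ for partial units, but without this step the two inequalities you do have ($\tspp(sx)\le\tspp(x)$ and $\tspp(s^*sx)\le\tspp(sx)$) do not combine to give equality.

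So the proposal has a genuine gap, though a repairable one: with the corrected formula $\spp_Q(s^*)=s^*s$, the chain $\tspp(s^*sx)\le\tspp(sx)$ (lemma-step at $(s^*,sx)$) and $\tspp(sx)=\tspp(ss^*sx)\le\tspp(s^*sx)$ (lemma-step at $(s,s^*sx)$ plus $ss^*s=s$) proves \eqref{eq:tut} for $s\in Q_\ipi$, and the reduction to partial units and the duality argument for \eqref{eq:tut2} are both fine, provided you note that $\tspp$ preserves joins (it is a direct image, hence a left adjoint), not merely that it is monotone. The middle paragraph of your write-up (the attempted comparison $sx\le\spp_Q(s^*)x$, the appeal to the adjunction $\alpha_*$, and the talk of ``same left-orbit'') should be discarded: those claims are either false or unsubstantiated, and they are not needed once the sandwich above is set up correctly.
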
 

\begin{proof}
Since $X$ is a $Q$-$R$-bilocale we have
\[
\tspp\circ \act_!=\tspp\circ (\pi_2)_!\;,
\]
where $(\pi_2)_!$ is the direct image homomorphism of the map
$\pi_2:G_1\otimes_{G_0} X\to X$.  
By Lemma~\ref{directimage}, $(\pi_2)_!$ is given, for all  $a\in Q$ and $x\in X$, by
\begin{equation}\label{directimagepi2}
 (\pi_2)_!(a\otimes x)=\spp_Q(a^*)x\;.
 \end{equation}
Then $\tspp(ax) =  \tspp(\act_!(a\otimes x))=  \tspp((\pi_2)_!( a\otimes  x))=  \tspp(\spp_Q( a^*) x)$ because $X$ is an open $Q$-locale. This proves \eqref{eq:tut}, and \eqref{eq:tut2} is similar.
\end{proof}

The following theorem shows that for a suitable $Q$-$R$-bilocale the involute of an element $r\in R$ can be regarded as an adjoint operator.

\begin{theorem}\label{axyxay}
Let $Q$ and $R$ be inverse quantal frames, and let $X$ be a $Q$-$R$-bilocale which is a $Q$-sheaf and also an open $R$-locale. Then for all $r\in R$ and all $x,y\in X$ we have
\begin{equation}\label{eq:axyxay}
\langle xr^*,y\rangle = {\langle x,yr\rangle}\;.
\end{equation}
\end{theorem}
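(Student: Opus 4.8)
The plan is to evaluate both sides of \eqref{eq:axyxay} using the inner product formula of Theorem~\ref{innerproduct2}, after first reducing to the case where $r$ is a partial unit. Since $R$ is an inverse quantal frame we have $\V R_\ipi=1_R$, and $R_\ipi$ is downwards closed, so every $r\in R$ can be written as $r=\V\{v\in R_\ipi\st v\le r\}$, whence also $r^*=\V\{v^*\st v\in R_\ipi,\ v\le r\}$. Because the right action of $R$ on $X$ and the inner product preserve joins in each variable --- the latter in its second argument because the involution of $Q$ is a sup-lattice isomorphism --- it is enough to prove \eqref{eq:axyxay} when $r=v\in R_\ipi$.

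Next I would record the ``right'' version of \eqref{partialunitaction}, namely
\[
(x\wedge yv)v^*=xv^*\wedge y\qquad(v\in R_\ipi,\ x,y\in X)\;,
\]
which follows by regarding $X$ as a left $R$-module under $r\bullet z:=zr^*$ (the anchor condition survives this identification because elements of $R_0$ are self-adjoint) and applying \eqref{partialunitaction} there. Now fix $v\in R_\ipi$. By Theorem~\ref{innerproduct2} and the bimodule associativity $u^*(xv^*)=(u^*x)v^*$ we get
\[
\langle xv^*,y\rangle=\V_{u\in Q_\ipi}u\,\spp_X\bigl((u^*x)v^*\wedge y\bigr)=\V_{u\in Q_\ipi}u\,\spp_X\bigl((u^*x\wedge yv)v^*\bigr)\;,
\]
so the whole statement reduces to the claim that $\spp_X(wv^*)=\spp_X(w)$ whenever $w\le yv$: applying this with $w:=u^*x\wedge yv$ and then Theorem~\ref{innerproduct2} once more gives $\langle xv^*,y\rangle=\V_{u\in Q_\ipi}u\,\spp_X(u^*x\wedge yv)=\langle x,yv\rangle$. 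For the claim, note that $vv^*v=v$ in an inverse quantal frame, so $(yv)(v^*v)=y(vv^*v)=yv$; by the right anchor condition this says $yv\le 1_X(v^*v)$, hence $w\le 1_X(v^*v)$. Since $\spp_R(v^*)=v^*v$, equation~\eqref{eq:tut2} of Lemma~\ref{tut} followed by the right anchor condition yields
\[
\spp_X(wv^*)=\spp_X\bigl(w\spp_R(v^*)\bigr)=\spp_X\bigl(w(v^*v)\bigr)=\spp_X\bigl(1_X(v^*v)\wedge w\bigr)=\spp_X(w)\;.
\]

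The hard part is exactly this last claim, $\spp_X(wv^*)=\spp_X(w)$ for $w\le yv$; the rest is bookkeeping with joins and the inner product formula. Conceptually it expresses that, once we are ``inside the projection $v^*v$'', the support of $X$ is insensitive to the right twist by $v^*$, and the tool that delivers it is Lemma~\ref{tut} --- the quantale incarnation of the fact that the right projection of a bilocale is invariant under the right action --- which collapses $\spp_X$ of a right translate to $\spp_X$ of a meet with an idempotent of $R_0$. I would expect the minor technical points (self-adjointness of $R_0$, the identity $vv^*v=v$, and the right form of \eqref{partialunitaction}) to be the only other places where a little care is needed.
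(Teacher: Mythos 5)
Your proof is correct and follows essentially the same route as the paper: reduce to $r\in R_\ipi$, expand both sides via Theorem~\ref{innerproduct2}, shuffle the partial unit across the meet using the right-module form of \eqref{partialunitaction} and the right anchor condition, and kill the extra twist by $v^*$ inside $\spp_X$ via Lemma~\ref{tut} together with $\spp_R(v^*)=v^*v$ (the paper phrases this as $r^*\spp_R(r)=r^*$ after inserting $r=rr^*r$, which is the same mechanism as your claim $\spp_X(wv^*)=\spp_X(w)$ for $w\le yv$). The only cosmetic difference is that the paper also restricts to local sections $x,y\in\sections_X$, which, as your argument shows, is not needed.
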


\begin{proof}
It suffices to prove \eqref{eq:axyxay} for $r\in R_\ipi$ and $x,y\in \sections_X$. Using Theorem~\ref{innerproduct2}, we have
\begin{align*}
\langle x,yr\rangle &= \V_{u\in R_\ipi} u\spp_X(u^*x\wedge yr)\\
&= \V_{u\in R_\ipi} u\spp_X(u^*x\wedge yrr^*r)\\
&=\V_{u\in R_\ipi} u\spp_X(u^*xr^*r\wedge yr)\\
&= \V_{u\in R_\ipi} u\spp_X\bigl((u^*xr^*\wedge y)r\bigr)\\
&= \V_{u\in R_\ipi} u\spp_X\bigl((u^*xr^*\wedge y)\spp_R(r)\bigr)&\text{(by Lemma~\ref{tut})}\\
&= \V_{u\in R_\ipi} u\spp_X\bigl(u^*xr^*\spp_R(r)\wedge y\bigr)\\
&= \V_{u\in R_\ipi} u\spp_X(u^*xr^*\wedge y) & \text{(because $r^*=r^*\spp_R(r)$)}\\
&={\langle xr^*,y\rangle}\;. 
\end{align*}
\end{proof}

\section{Hilsum--Skandalis maps}\label{sec:principalbundles}

In this section we provide an overview of definitions and facts concerning principal bundles, Hilsum--Skandalis maps and Morita equivalence for localic \'etale groupoids, in particular addressing the facts and definitions that will be adapted for inverse quantal frames in the remainder of the paper.

\subsection{Principal bundles}\label{subsec:principalbundles1}

Let $G$ be a groupoid and $M$ a locale. A \emph{(left) $G$-bundle $(X,p,\act,\pi)$ over $M$} is a (left) $G$-locale $(X, p,\act)$ equipped with a map of locales $\pi: X\to M$ such that the diagram
\[
\vcenter{\xymatrix{
G_1\otimes_{G_0} X\ar[r]^-{\act} \ar[d]_{\pi_2}  &X\ar[d]^{\pi}\\
X\ar[r]_{\pi} & M
}
}
\]
is commutative in $\Loc$. Any left $G$-locale $X$ is a $G$-bundle over $X/G$. 

A $G$-bundle over $M$ is \emph{principal} if the above diagram is a pullback diagram (equivalently,
\begin{equation}\label{principaliso}
\langle \act, \pi_2 \rangle: G_1\otimes_{G_0} X \to X\otimes_{M} X
\end{equation}
is an isomorphism of locales) and $\pi:X\to M$ is an open surjective map (necessarily a local homeomorphism because $G$ is \'etale --- see Lemma~\ref{funct2, lemma: covering} below).
We shall usually denote the inverse of the isomorphism $\langle \act,\pi_2\rangle$ of \eqref{principaliso} by $\langle \theta,\pi_2\rangle$, 
\begin{equation}\label{eq:theta}
\xymatrix{
X\otimes_{M} X\ar@<1.0ex>[rr]_{\cong}^{ \langle \theta,\pi_2 \rangle}&& G_1\otimes_{G_0} X \ar@<1.0ex>[ll]^{ \langle \act,\pi_2 \rangle}\;,
}
\end{equation}
where $\theta:X\otimes_{M} X\to G_1$ is a map of locales.

Similarly, we define \emph{(right) $G$-bundles} and \emph{principal (right) $G$-bundles} (with pullback over the range map $r$) over $M$.

\begin{example}
$G_1$ is itself a left $G$-bundle over $G_0$ with $\pi= r$ and $p=d$, and a right $G$-bundle with $\pi= d$ and $p=r$. Both $G$-bundles are principal and the actions commute.
\end{example}

The following two lemmas state simple properties of principal bundles that are well known in the context of topological groupoids.

\begin{lemma}\label{McongX/G}
Let $G$ be a groupoid and $(X,p,\act,\pi)$ a principal $G$-bundle over a locale $M$. Then $M\cong X/G$.
\end{lemma}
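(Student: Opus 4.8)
The plan is to exploit the defining pullback square together with the fact that $\pi\colon X\to M$ is an open surjection, and to identify $M$ with the orbit locale $X/G$ by means of the universal property of the coequalizer \eqref{orbitlocale}. Concretely, I would first observe that, since the principality square
\[
\vcenter{\xymatrix{
G_1\otimes_{G_0} X\ar[r]^-{\act} \ar[d]_{\pi_2}  &X\ar[d]^{\pi}\\
X\ar[r]_{\pi} & M
}}
\]
is a pullback, the pair of maps $(\act,\pi_2)\colon G_1\otimes_{G_0}X\rightrightarrows X$ is precisely the kernel pair of $\pi$. Thus the claim $M\cong X/G$ is exactly the statement that $\pi$ is the coequalizer of its own kernel pair, i.e.\ that $\pi$ is an \emph{effective epimorphism} in $\Loc$. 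This is where the openness and surjectivity of $\pi$ enter: open surjections of locales are effective descent morphisms, hence in particular effective epimorphisms, so $\pi$ coequalizes its kernel pair $(\act,\pi_2)$, and since $X/G$ is \emph{defined} as the coequalizer of $(\act,\pi_2)$ we get the desired isomorphism $M\cong X/G$, compatibly with the projections.

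In slightly more hands-on terms, working with frames: write $\pi^*\colon \opens(M)\to X$ for the inverse image. Because $\pi$ is semiopen (indeed open) there is a direct image $\pi_!$ with $\pi_!\circ\pi^*=\ident$ (surjectivity) and Frobenius reciprocity. One then checks that $\pi^*$ is injective with image exactly the invariant elements $I(X)=X/G$: the inclusion $\pi^*(\opens(M))\subseteq I(X)$ is immediate from commutativity of the square (the square says $\pi\circ\act=\pi\circ\pi_2$, so $\act^*\pi^* = \pi_2^*\pi^*$, which is the invariance condition), and the reverse inclusion is where the pullback hypothesis is used: if $z\in X$ is invariant then, transporting along the isomorphism $\langle\act,\pi_2\rangle$ of \eqref{principaliso}, one shows $z$ lies in the equalizer of $\act^*$ and $\pi_2^*$, which by the pullback property is $\pi^*(\opens(M))$. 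Combined with injectivity of $\pi^*$ (a consequence of $\pi$ being a surjection), this gives a frame isomorphism $\opens(M)\cong I(X)$, and under the identification $X/G=I(X)$ recalled before Lemma~\ref{subhilbertbasis} (from \cite{Re15}*{Theorem 3.3}) this is precisely $M\cong X/G$.

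The main obstacle I anticipate is the descent/effective-epimorphism step: passing from ``$\pi$ is an open surjection'' to ``$\pi$ coequalizes its kernel pair'' is the one genuinely non-formal ingredient, and one must be careful that the coequalizer computed in $\Loc$ really does agree with $X/G$ as defined in \eqref{orbitlocale} — i.e.\ that the kernel pair of $\pi$ coincides with the parallel pair $(\act,\pi_2)$, which is exactly the content of the pullback hypothesis. Everything else (injectivity of $\pi^*$ from surjectivity of $\pi$; the easy inclusion $\pi^*(\opens(M))\subseteq I(X)$ from commutativity) is routine. I would therefore structure the proof as: (i) the square is the kernel pair of $\pi$; (ii) open surjections are effective epimorphisms in $\Loc$; (iii) conclude $M\cong\mathrm{coeq}(\act,\pi_2)=X/G$, noting that the isomorphism is over $X$ so it respects the bundle structure.
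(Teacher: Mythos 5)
Your proposal is correct and follows essentially the same route as the paper: the paper likewise notes that an open surjection $\pi$ is the coequalizer of its kernel pair $X\otimes_M X\rightrightarrows X$ (citing \cite{JT}, Section V.4) and that principality identifies this kernel pair with $(\act,\pi_2)$, whence $M\cong X/G$ by the defining coequalizer \eqref{orbitlocale}. Your supplementary frame-level sketch is just this same argument restated via $I(X)$, so nothing further is needed.
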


\begin{proof}
Since $\pi:X\to M$ is an open surjection, it can be obtained as a coequalizer~\cite{JT}*{section V.4}:
\[\xymatrix{ X\otimes_{M} X\ar@<0.7ex>[r]^-{ \pi_1}\ar@<-0.7ex>[r]_-{ \pi_2}& X\ar@{->>}[r]^-{\pi} & M\;.
}
\]
By principality, $\pi$ is also the coequalizer of  
\[\xymatrix{ G_1\otimes_{G_0} X\ar@<0.7ex>[r]^-{ \act}\ar@<-0.7ex>[r]_-{ \pi_2}& X\;,
}
\]
and thus $M\cong X/G$\;. 
\end{proof}

From here on the expression ``principal $G$-bundle'' will implicitly mean a principal $G$-bundle over $X/G$.

\begin{lemma}\label{funct2, lemma: covering}
Let $G$ be an \'etale groupoid and $X\equiv(X,p,\act,\pi)$ be a principal $G$-bundle. Then $\pi$ is a local homeomorphism.
\end{lemma}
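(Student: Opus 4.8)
The plan is to obtain the statement by descent of local homeomorphisms along open surjections of locales. The first, easy, observation is that the projection $\pi_2:G_1\otimes_{G_0}X\to X$ is itself a local homeomorphism: as recorded in Section~\ref{sec:prelimactions}, $\pi_2$ is a pullback of the domain map $d$ of $G$, the map $d$ is a local homeomorphism since $G$ is \'etale, and local homeomorphisms are stable under pullback.

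Next I would bring in principality. By definition of a principal $G$-bundle, $\pi:X\to M$ is an open surjection and the square
\[
\xymatrix{
G_1\otimes_{G_0} X\ar[r]^-{\act} \ar[d]_{\pi_2}  &X\ar[d]^{\pi}\\
X\ar[r]_{\pi} & M
}
\]
is a pullback in $\Loc$; via the isomorphism $\langle\act,\pi_2\rangle$ of \eqref{principaliso} one may equally read $\pi_2$ as the second projection $X\otimes_M X\to X$. Either way, $\pi_2$ is precisely the pullback of the map $\pi:X\to M$ along the open surjection $\pi:X\to M$ itself.

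It then remains to invoke the standard fact (see~\cite{JT}) that open surjections of locales are of effective descent and that being a local homeomorphism is a property that is local for the open-surjection topology: a map of locales is a local homeomorphism whenever its pullback along some open surjection is one. Applying this with the open surjection $\pi$ and the map $\pi$, whose pullback is the local homeomorphism $\pi_2$, we conclude that $\pi$ is a local homeomorphism. I expect this descent step to be the only substantive ingredient, the rest being bookkeeping already carried out in the preliminaries. A more self-contained route would try to transport a cover of $G_1\otimes_{G_0}X$ by local sections of $\pi_2$ --- obtainable from the localic bisections $Q_\ipi$ of the \'etale groupoid $G$ --- across the principality isomorphism and then push it forward along the open surjection $\pi_1=\act$ (using Corollary~\ref{lem:surjstab}); but verifying that the resulting elements of $X$ are genuine local sections of $\pi$ seems to re-require precisely the descent property, so citing it directly is cleaner.
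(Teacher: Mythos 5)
Your argument is correct, but it follows a genuinely different route from the paper's. The paper exploits the fact that $\pi$ is already open (by the definition of principal bundle) and that an open map is a local homeomorphism precisely when its diagonal $\Delta:X\to X\otimes_M X$ is open; principality identifies $\Delta$ with the map $\langle u\circ p,\ident_X\rangle:X\to G_1\otimes_{G_0}X$, which is a section of the local homeomorphism $\pi_2$ and hence is itself a local homeomorphism by the cancellation property \cite{elephant}*{Lemma C1.3.2(iii)}, so the diagonal is open and the lemma follows by entirely elementary means (pullback stability plus cancellation for \'etale maps). You instead observe that, by the pullback square \eqref{principaliso}, $\pi_2$ (equivalently the projection $X\otimes_M X\to X$) is the pullback of $\pi$ along the open surjection $\pi$ itself, and you conclude by descent of \'etaleness along open surjections. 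That descent fact is true and standard --- it reduces to descent of openness along open surjections from \cite{JT}, applied once to $\pi$ and once to its diagonal (whose pullback along the induced open surjection $X\otimes_M X\otimes_M X\to X\otimes_M X$, or directly along $\pi$, is again a diagonal of an \'etale map) --- but it is a substantially heavier ingredient than anything the paper invokes, so you trade the paper's short explicit computation with the unit section for a citation of the Joyal--Tierney descent machinery; what you gain is a statement of wider scope (any map of locales whose pullback along some open surjection is \'etale is \'etale, with no groupoid structure needed), and you are also right that your sketched ``self-contained'' alternative of transporting local sections across the principality isomorphism would secretly re-prove the same descent property, so citing it directly is the honest form of your argument.
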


\begin{proof}
Notice that $\pi$ is open by definition of principal bundle. Hence, in order to prove that $\pi$ is a local homeomorphism it is enough to show that the diagonal map $\Delta: X\to X\otimes_{X/G} X$ is open~\cite{JT}*{Chapter V}. Equivalently, since $X$ is principal, let us prove that the map
\begin{equation*}
\langle u\circ p, \ident_X\rangle:X\to G_1\otimes_{G_0} X
\end{equation*}
is open. This is true because we have the following factorization of $\ident_X$,
\[
\xymatrix{
X\ar[rr]^-{\langle u\circ p, \ident_X\rangle}\ar@{=}[rd]&&G_1\otimes_{G_0} X\ar[dl]^{\pi_2}\\
&X
}
\]
where $\pi_2$ is a local homeomorphism because it is a pullback of $d$, so from \cite{elephant}*{Lemma C1.3.2(iii)} it follows that $\langle u\circ p, \ident_X\rangle$ is a local homeomorphism.
\end{proof}

\subsection{Point-set reasoning}\label{sec:pointset}

Let $G$ be an \'etale groupoid and $X$ a principal $G$-bundle with action $\act$. Let also $\theta:X\otimes_{X/G} X\to G_1$ be the map such that $\langle\theta,\pi_2\rangle=\langle\act,\pi_2\rangle^{-1}$ [\cf\ \eqref{eq:theta} in section~\ref{subsec:principalbundles1}]. Note that, if $G$ is a topological groupoid and $X$ a topological principal $G$-bundle, $\theta(x,x')$ is, given some pair $(x,x')\in X\times_{X/G} X$, the unique arrow $g\in G_1$ such that $g x'=x$ (where we write $gx$ instead of $\act(g,x)$ for the action of an arrow $g\in G_1$ on an element $x\in X$). It follows, in particular, that $\theta(x,x)$ is the identity arrow $u(p(x))\in G_1$. Such properties are less easy to describe in the case of localic groupoids, but we obtain similar statements by replacing elements such as $x\in X$ by maps $\boldsymbol x$ into $X$, and expressions such as $\theta(x,x')$ by $\theta\circ\langle\boldsymbol x,\boldsymbol x'\rangle$, as follows:

\begin{lemma}\label{lem:localicreasoningwithglobalpoints}
Let $G$ be a groupoid and $(X,p,\act,\pi)$ a principal $G$-bundle. For all locales $Z$, and for all maps $\boldsymbol g:Z\to G_1$ and $\boldsymbol x,\boldsymbol x':Z\to X$ such that $r\circ\boldsymbol g=p\circ\boldsymbol x$ and $\pi\circ\boldsymbol x=\pi\circ\boldsymbol x'$, we have
\begin{enumerate}
\item\label{lem:localicreasoningwithglobalpoints1} $\theta\circ \langle \boldsymbol x, \boldsymbol x' \rangle=i\circ \theta\circ \langle \boldsymbol x', \boldsymbol x \rangle$.
\item\label{lem:localicreasoningwithglobalpoints2} $p\circ\boldsymbol x'=r\circ\theta\circ\langle\boldsymbol x,\boldsymbol x'\rangle$ (so $\langle\theta\circ\langle\boldsymbol x,\boldsymbol x'\rangle,\boldsymbol x'\rangle$ is a map into $G_1\otimes_{G_0} X$);
\item\label{lem:localicreasoningwithglobalpoints3} $\act\circ\langle\theta\circ\langle\boldsymbol x,\boldsymbol x'\rangle,\boldsymbol x'\rangle =\boldsymbol x$;
\item\label{lem:localicreasoningwithglobalpoints4} $\theta\circ\langle\boldsymbol x,\boldsymbol x\rangle=u\circ p\circ\boldsymbol x$.
\item\label{lem:localicreasoningwithglobalpoints5} $\theta\circ \langle \act\circ \langle \boldsymbol g, \boldsymbol x \rangle, \boldsymbol x' \rangle=m\circ \langle  \boldsymbol g, \theta\circ \langle \boldsymbol x, \boldsymbol x' \rangle \rangle$.
\end{enumerate}
\end{lemma}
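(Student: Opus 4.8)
The strategy is to exploit the fact that, for a principal bundle, the canonical map $\langle\act,\pi_2\rangle : G_1\otimes_{G_0} X \to X\otimes_{X/G} X$ is an isomorphism, so any statement about maps into $G_1\otimes_{G_0}X$ can be translated into a statement about maps into the pullback $X\otimes_{X/G}X$, where equalities are checked componentwise (i.e.\ by composing with the two projections $\pi_1,\pi_2$). Throughout, the key identity is that $\langle\theta,\pi_2\rangle$ is the two-sided inverse of $\langle\act,\pi_2\rangle$, which spells out as the two equations $\act\circ\langle\theta\circ\langle\boldsymbol x,\boldsymbol x'\rangle,\boldsymbol x'\rangle=\boldsymbol x$ and $\pi_2\circ\langle\theta\circ\langle\boldsymbol x,\boldsymbol x'\rangle,\boldsymbol x'\rangle=\boldsymbol x'$ for the composite one way, and $\theta\circ\langle\act\circ\langle\boldsymbol g,\boldsymbol x\rangle,\boldsymbol x\rangle=\boldsymbol g$ (plus the trivial second component) for the composite the other way. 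So item~\eqref{lem:localicreasoningwithglobalpoints3} is essentially immediate from the definition of $\theta$, and item~\eqref{lem:localicreasoningwithglobalpoints2} is the ``well-typedness'' observation that makes the left-hand side of \eqref{lem:localicreasoningwithglobalpoints3} even make sense: $\pi_2\circ\langle\act,\pi_2\rangle^{-1}=\theta$-part's partner forces $p\circ\boldsymbol x' = p\circ\pi_2\circ\langle\boldsymbol x,\boldsymbol x'\rangle = r\circ\act$-compatibility $= r\circ\theta\circ\langle\boldsymbol x,\boldsymbol x'\rangle$, using that the pullback $G_1\otimes_{G_0}X$ is taken along $r$ and $p$.

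For item~\eqref{lem:localicreasoningwithglobalpoints4}, I would apply the injectivity of $\langle\act,\pi_2\rangle$: it suffices to check that $\langle\act,\pi_2\rangle\circ\langle\theta\circ\langle\boldsymbol x,\boldsymbol x\rangle,\boldsymbol x\rangle$ and $\langle\act,\pi_2\rangle\circ\langle u\circ p\circ\boldsymbol x,\boldsymbol x\rangle$ agree, i.e.\ that they have the same two components. The first components are $\boldsymbol x$ (by \eqref{lem:localicreasoningwithglobalpoints3}) and $\act\circ\langle u\circ p\circ\boldsymbol x,\boldsymbol x\rangle=\boldsymbol x$ (unitarity of the action), and the second components are both $\boldsymbol x$. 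For item~\eqref{lem:localicreasoningwithglobalpoints1}, again test against $\langle\act,\pi_2\rangle$: the right-hand side $i\circ\theta\circ\langle\boldsymbol x',\boldsymbol x\rangle$ should, when acting on $\boldsymbol x$ via $\act$, return $\boldsymbol x'$. This is the localic avatar of ``if $g\boldsymbol x'=\boldsymbol x$ then $g^{-1}\boldsymbol x=\boldsymbol x'$''; concretely one uses \eqref{lem:localicreasoningwithglobalpoints3} applied to the pair $(\boldsymbol x',\boldsymbol x)$ to get $\act\circ\langle\theta\circ\langle\boldsymbol x',\boldsymbol x\rangle,\boldsymbol x\rangle=\boldsymbol x'$, then applies the $G$-action identity $\act\circ(\ident\otimes\act)\circ(\text{with }i)$ — i.e.\ the groupoid axiom relating $m$, $i$ and $u$ — to conclude $\act\circ\langle i\circ\theta\circ\langle\boldsymbol x',\boldsymbol x\rangle,\boldsymbol x\rangle=\boldsymbol x$, which is the first component of $\langle\act,\pi_2\rangle$ applied to the claimed equality; the second components match trivially.

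Item~\eqref{lem:localicreasoningwithglobalpoints5} is the cocycle/multiplicativity property and is the one I expect to be the main obstacle, mostly bookkeeping: one must first check that all the composites are well-typed (that $r\circ m\circ\langle\boldsymbol g,\theta\circ\langle\boldsymbol x,\boldsymbol x'\rangle\rangle$ matches $p\circ\boldsymbol x'$, using \eqref{lem:localicreasoningwithglobalpoints2} and the compatibility $r\circ\boldsymbol g=p\circ\boldsymbol x$, and that the middle composition along $G_2=G_1\otimes_{G_0}G_1$ makes sense). Then I test the desired equality against $\langle\act,\pi_2\rangle$. The second component is $\boldsymbol x'$ on both sides. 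For the first component one must show $\act\circ\langle m\circ\langle\boldsymbol g,\theta\circ\langle\boldsymbol x,\boldsymbol x'\rangle\rangle,\boldsymbol x'\rangle = \act\circ\langle\boldsymbol g,\boldsymbol x\rangle$; using the associativity axiom for the $G$-action ($\act\circ(m\otimes\ident)=\act\circ(\ident\otimes\act)$) the left side becomes $\act\circ\langle\boldsymbol g,\act\circ\langle\theta\circ\langle\boldsymbol x,\boldsymbol x'\rangle,\boldsymbol x'\rangle\rangle$, and the inner $\act\circ\langle\theta\circ\langle\boldsymbol x,\boldsymbol x'\rangle,\boldsymbol x'\rangle$ equals $\boldsymbol x$ by \eqref{lem:localicreasoningwithglobalpoints3}, giving exactly $\act\circ\langle\boldsymbol g,\boldsymbol x\rangle$. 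Throughout, the only nontrivial inputs are the groupoid action axioms (unitarity and associativity, including the role of $i$ and $u$) and the defining property of $\theta$ as the inverse of $\langle\act,\pi_2\rangle$; the rest is verifying that the relevant arrows land in the correct fibre products, which is routine from the hypotheses $r\circ\boldsymbol g=p\circ\boldsymbol x$ and $\pi\circ\boldsymbol x=\pi\circ\boldsymbol x'$.
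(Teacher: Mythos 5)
Your handling of items (2)--(5) is essentially the paper's own argument: everything is driven by the fact that $\langle\theta,\pi_2\rangle$ is the two-sided inverse of $\langle\act,\pi_2\rangle$, so (3) is read off from the composite one way, (2) is the typing observation that $\langle\theta,\pi_2\rangle$ lands in $G_1\otimes_{G_0}X$, and (4), (5) come from unitarity resp.\ associativity of the action transported along the isomorphism (the paper composes explicitly with the inverse and projects by $\pi_1$; you invoke injectivity and compare the two components --- the same computation in different clothing). Where you genuinely diverge is item (1): the paper proves it via the isomorphism between left and right $G$-locales (Lemma~\ref{G-loc,isom}) and a commuting square involving the swap $\langle\pi_2,\pi_1\rangle$ and the inversion $i$, whereas you stay inside the left-action picture and deduce it from (3), the groupoid inverse and unit laws, and injectivity of $\langle\act,\pi_2\rangle$; this is more elementary and works, and is closer to the point-set slogan ``$g\boldsymbol x'=\boldsymbol x\Rightarrow g^{-1}\boldsymbol x=\boldsymbol x'$'' that the lemma is meant to encode. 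Two caveats on your (1) as written, though: the variables got swapped --- the identity you need is $\act\circ\langle i\circ\theta\circ\langle\boldsymbol x',\boldsymbol x\rangle,\boldsymbol x'\rangle=\boldsymbol x$, with $\boldsymbol x'$ in the second slot; the equation you display, $\act\circ\langle i\circ\theta\circ\langle\boldsymbol x',\boldsymbol x\rangle,\boldsymbol x\rangle=\boldsymbol x$, is not even well-typed unless $p\circ\boldsymbol x=p\circ\boldsymbol x'$ --- and before applying $\langle\act,\pi_2\rangle$ you must check that $\langle i\circ\theta\circ\langle\boldsymbol x',\boldsymbol x\rangle,\boldsymbol x'\rangle$ is a map into $G_1\otimes_{G_0}X$, i.e.\ that $d\circ\theta=p\circ\pi_1$, which follows from the pullback square \eqref{pi1pbalongd} together with $\act\circ\langle\theta,\pi_2\rangle=\pi_1$; this is not supplied by item (2) alone. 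Neither point is a real obstruction, but both need to be fixed in a full write-up.
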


\begin{proof}
1.  By Lemma \ref{G-loc,isom} the categories of left $G$-locales and right $G$-locales are isomorphic. Moreover, due to principality the upper rectangle of the following diagram is commutative, and thus so is the lower rectangle:
\[\xymatrix{ 
G_1\otimes_{G_0} X\ar[rr]^{\cong} \ar@/_4pc/[dd]_{\pi_1}   && X\otimes_{G_0} G_1 \ar@/^4pc/[dd]^{\pi_2}\\
X\otimes_{{X/G}} X\ar[u]^{\langle \theta,\pi_2\rangle}_{\cong} \ar[d]^{\theta}\ar[rr]^{\langle \pi_2,\pi_1 \rangle}_\cong  &&  X\otimes_{{X/G}} X\ar[u]_{\langle \pi_1,\theta\rangle}^{\cong}\ar[d]^{\theta}\\ 
G_1\ar[rr]_{i}^{\cong} && G_1
}
\]
Hence,
\begin{align*}
\theta\circ \langle \boldsymbol x, \boldsymbol x' \rangle &= \theta\circ \langle \pi_2,\pi_1 \rangle\circ \langle \boldsymbol x', \boldsymbol x \rangle\\
&= i\circ \theta \circ \langle \boldsymbol x', \boldsymbol x \rangle\;.
\end{align*}

2. Immediate because $\langle\theta,\pi_2\rangle$ is a map into $G_1\otimes_{G_0} X$.

3. Since $\langle\theta,\pi_2\rangle=\langle\act,\pi_2\rangle^{-1}$, we have
\begin{align*}
\boldsymbol x&=\pi_1\circ\langle\boldsymbol x,\boldsymbol x'\rangle\\
&=\pi_1\circ\langle\act,\pi_2\rangle\circ\langle\theta,\pi_2\rangle\circ\langle\boldsymbol x,\boldsymbol x'\rangle\\
&=\pi_1\circ \langle\act,\pi_2\rangle\circ\langle\theta\circ\langle\boldsymbol x,\boldsymbol x'\rangle,\boldsymbol x'\rangle\\
&=\pi_1\circ \bigl\langle\act\circ\langle\theta\circ\langle\boldsymbol x,\boldsymbol x'\rangle,\boldsymbol x'\rangle,\boldsymbol x'\bigr\rangle\\
&=\act\circ\langle\theta\circ\langle\boldsymbol x,\boldsymbol x'\rangle,
\boldsymbol x'\rangle\;.
\end{align*}

4. The unit laws of $G$ give us the equality
\begin{equation}\label{eq:aupxx}
\act\circ\langle u\circ p\circ\boldsymbol x,\boldsymbol x\rangle=\boldsymbol x\;,
\end{equation}
so we obtain
\begin{align*}
u\circ p\circ \boldsymbol x
&= \pi_1\circ\langle u\circ p\circ \boldsymbol x,\boldsymbol x\rangle\\
&= \pi_1\circ\langle\theta,\pi_2\rangle\circ\langle\act,\pi_2\rangle\circ\langle u\circ p\circ\boldsymbol x,\boldsymbol x\rangle&(\langle\theta,\pi_2\rangle=\langle\act,\pi_2\rangle^{-1})\\
&= \pi_1\circ\langle\theta,\pi_2\rangle\circ\langle\act\circ\langle u\circ p\circ\boldsymbol x,\boldsymbol x\rangle,\boldsymbol x\rangle\\
&= \pi_1\circ\langle\theta,\pi_2\rangle\circ\bigl\langle\boldsymbol x,\boldsymbol x\bigr\rangle&\text{[by \eqref{eq:aupxx}]}\\
&= \pi_1\circ\bigl\langle \theta\circ\langle\boldsymbol x,\boldsymbol x\rangle,\boldsymbol x\bigr\rangle\\
&= \theta\circ\langle\boldsymbol x,\boldsymbol x\rangle\;.
\end{align*}

5. The associativity law and the principality of $X$  give us the following equality:
\begin{equation}\label{eq:mthetaa}
(m\otimes \ident_X)\circ (\ident_{G_1}\otimes \langle \theta,\pi_2 \rangle)\circ \langle \boldsymbol g,\langle \boldsymbol x, \boldsymbol x' \rangle \rangle= \langle \theta,\pi_2 \rangle\circ (\act \otimes \ident_X)\circ \langle \boldsymbol g,\langle \boldsymbol x, \boldsymbol x' \rangle \rangle\;.
\end{equation}
Then,
\begin{align*}
m\circ \langle \boldsymbol g, \theta \circ \langle \boldsymbol x, \boldsymbol x' \rangle\rangle  &= \pi_1\circ \langle m\circ \langle \boldsymbol g, \theta\circ \langle \boldsymbol x, \boldsymbol x' \rangle \rangle, \boldsymbol x' \rangle\\
&= \pi_1\circ (m\otimes \ident_X)\circ \langle \boldsymbol g, \langle \theta\circ \langle \boldsymbol x, \boldsymbol x' \rangle, \boldsymbol x' \rangle \rangle\\
 &= \pi_1\circ (m\otimes \ident_X)\circ (\ident_{G_1}\otimes \langle \theta,\pi_2 \rangle)\circ \langle \boldsymbol g,\langle \boldsymbol x, \boldsymbol x' \rangle \rangle\\
&= \pi_1\circ \langle \theta,\pi_2 \rangle\circ (\act \otimes \ident_X)\circ \langle \boldsymbol g,\langle \boldsymbol x, \boldsymbol x' \rangle \rangle\quad \text{[by \eqref{eq:mthetaa}]}\\
&= \pi_1\circ \langle \theta,\pi_2 \rangle\circ \langle \act\circ \langle \boldsymbol g, \boldsymbol x \rangle, \boldsymbol x' \rangle\\
&= \pi_1\circ \langle \theta\circ \langle \act\circ \langle \boldsymbol g, \boldsymbol x \rangle, \boldsymbol x' \rangle,  \boldsymbol x' \rangle\\
&= \theta\circ \langle \act\circ \langle \boldsymbol g,\boldsymbol x \rangle, \boldsymbol x' \rangle\;. 
\end{align*}
\end{proof}

This lemma provides an example of how topological arguments using points can be translated to localic arguments (\cf~\cite{Vi07}). Such an adaptation is valid due to the simple observation that given any locale maps $f,g:X\to Y$ we have $f=g$ if and only if for all locales $Z$ and all maps $\boldsymbol x:Z\to X$ the equation $f\circ\boldsymbol x= g\circ\boldsymbol x$ holds. In particular, in the case of maps in several variables, say $f,g:X\otimes X'\to Y$, a map $Z\to X\otimes X'$ is the same thing as a pairing map $\langle\boldsymbol x,\boldsymbol x'\rangle$, as in Lemma~\ref{lem:localicreasoningwithglobalpoints}.

\subsection{Principal bibundles}

There are several notions of map from a groupoid $H$ to a groupoid $G$ which are based on bibundles between $G$ and $H$ and generalize continuous functors $\varphi:H\to G$. For instance, Hilsum and Skandalis~\cite{HS87} define a map $\varphi:W\to V$ between the spaces of leaves of two smooth foliated manifolds to be a \emph{principal $G$-$H$-bibundle} --- that is, a principal $G$-bundle over $H_0$ --- where $G$ and $H$ are the holonomy groupoids of $V$ and $W$, respectively. In the present paper, following~\cite{Mr99}, by a \emph{Hilsum--Skandalis map} from an \'etale groupoid $H$ to an \'etale groupoid $G$ will be meant the isomorphism class of a principal $G$-$H$-bibundle. Another name for such maps is \emph{bibundle functors} \cite{MeyerZhu}. The \emph{category $\HSG$ of Hilsum--Skandalis maps} is that whose objects are \'etale groupoids and whose morphisms are the Hilsum--Skandalis maps, with the composition in $\HSG$ being induced by the tensor product of principal bibundles.
 
In order to see that the composition in $\HSG$ is well defined let $G$, $H$, and $K$ be \'etale groupoids and $X$ and $Y$ a principal $G$-$H$-bibundle and an $H$-$K$-bibundle, respectively. We need to prove that $X\otimes_H Y$ is a principal $G$-$K$-bibundle; that is, we need to find a suitable isomorphism
\[
\kappa: (X\otimes_H Y)\otimes_{K_0}(X\otimes_H Y)\to G_1\otimes_{G_0} (X\otimes_H Y)\;.
\]
Let $\act$ and $\bct$ be the actions $G_1\otimes_{G_0} X\to X$ and $H_1\otimes_{H_0}Y\to Y$, respectively, and let $\theta:X\otimes_{H_0}X\to G_1$ and $\theta':Y\otimes_{K_0} Y\to H_1$ be the maps such that $\langle\theta,\pi_2\rangle$ and $\langle\theta',\pi_2\rangle$ are the inverses of $\langle\act,\pi_2\rangle$ and $\langle\bct,\pi_2\rangle$, respectively [\cf\ \eqref{eq:theta}]:

\[\begin{array}{c}
\xymatrix{
X\otimes_{H_0} X\ar@<1.0ex>[rr]_{\cong}^{ \langle \theta,\pi_2 \rangle}&& G_1\otimes_{G_0} X \ar@<1.0ex>[ll]^{ \langle \act,\pi_2 \rangle}\;,
}\\
\xymatrix{
Y\otimes_{K_0} Y\ar@<1.0ex>[rr]_{\cong}^{ \langle \theta',\pi_2 \rangle}&& H_1\otimes_{H_0} Y \ar@<1.0ex>[ll]^{ \langle \act,\pi_2 \rangle}\;.
}
\end{array}
\]
For topological groupoids and continuous actions the tensor product isomorphism
\[
\kappa: (X\otimes_H Y)\times_{K_0}(X\otimes_H Y)\to G_1\times_{G_0} (X\otimes_H Y)\;.
\]
can be described directly by the following formula for arbitrary points $x,x'\in X$ and $y,y'\in Y$~\cite{Mr99}:
\[
\kappa(x\otimes y,x'\otimes y') = \bigl(\theta(x\theta'(y,y'),x'),x'\otimes y'\bigr)\;,
\]
where $x\theta'(y,y'):=\bct(x,\theta'(y,y'))$, and $x\otimes y$ and $x'\otimes y'$ are the orbits of $(x,y)$ and $(x',y')$, respectively, in $X\otimes_H Y$.
(It is clear that $\kappa$ is well defined for we have
$\kappa\bigl((x h)\otimes y,x'\otimes y'\bigr)=\kappa\bigl(x\otimes (h y),x'\otimes y'\bigr)$, which is proved using the obvious equality $\theta'(h y,y')=h\theta'(y,y')$ --- \cf\ Lemma~\ref{lem:localicreasoningwithglobalpoints}\eqref{lem:localicreasoningwithglobalpoints5} ---, etc.)
The proof that this is the inverse of $\langle \act',\pi_2\rangle$, where $\act'$ is the action $G_1\times_{G_0} (X\otimes_{H} Y)\to X\otimes_{H} Y$, which for all $g\in G_1$, $x\in X$ and $y\in Y$ is given by
\[
g (x\otimes y) = (g x)\otimes y\;,
\]
is based on simple properties that carry over to localic groupoids in the manner of Lemma~\ref{lem:localicreasoningwithglobalpoints}. This adaptation is straightforward but tedious, and we omit it.

\subsection{Functors}

We provide a localic version of the functor $\langle - \rangle: \grpd\to \HSG$ from the category of \'etale groupoids $\grpd$ into the category of Hilsum--Skandalis maps $\HSG$, presented for topological and Lie groupoids in \cite{Mr99, MeyerZhu,MrcunPhD}.

Let $G$ and $H$ be \'etale groupoids and let $\phi=(\phi_0,\phi_1)$ be a functor of groupoids from $H$ to $G$. Let $\langle\phi\rangle:=G_1\otimes_{G_0} H_0$ be the pullback of $r$ and $\phi_0$:
\[
\xymatrix{
G_1\otimes_{G_0} H_0\ar[rr]^-{\pi_2}\ar[d]_{\pi_1}&&H_0\ar[d]^{\phi_0}\\
G_1\ar[rr]_r&&G_0
}
\]
Then $\langle\phi\rangle$
is a left $G$-locale with projection map $p:=d\circ \pi_1$ and action given by
\begin{equation}
\xymatrix{
G_1 {\otimes_{G_0}} (G_1\otimes_{G_0} H_0)\ar[rrrr]^-{\bigl\langle m\circ(\ident_{G_1}\otimes(\pi_1\circ\pi_2)),\, \pi_2\circ\pi_2\bigr\rangle} &&&& G_1\otimes_{G_0} H_0\;,
}
\end{equation}
or, equivalently,
\begin{equation}
\xymatrix{
G_1 {\otimes_{G_0}} (G_1\otimes_{G_0} H_0)\cong G_2\otimes_{G_0} H_0\ar[rr]^-{m\otimes \ident_{H_0}} && G_1\otimes_{G_0} H_0\;.
}
\end{equation}
And $\langle\phi\rangle$ is also a right $H$-locale with projection the \'etale map $q:=\pi_2$ and action given by:
\begin{equation}
\xymatrix{
(G_1\otimes_{G_0} H_0)\otimes_{H_0} H_1\ [\cong G_1\otimes_{G_0} H_1]\ar[rrrr]^-{\bigl\langle m\circ((\pi_1\circ\pi_1)\otimes\phi_1),\,r\circ\pi_2\bigr\rangle} &&&& G_1\otimes_{G_0} H_0\;.
}
\end{equation}
It is clear that $\langle\phi\rangle$ is a $G$-$H$-bibundle. Moreover, it is a principal $G$-$H$-bibundle:
\begin{align*}
G_1\otimes_{G_0}(G_1\otimes_{G_0} H_0) &\cong (G_1\otimes_{G_0}G_1)\otimes_{G_0} H_0\\
&\cong G_1\otimes_{G_0} (H_0\otimes_{H_0} G_1)\otimes_{G_0} H_0\\
&\cong (G_1\otimes_{G_0} H_0)\otimes_{H_0} (G_1\otimes_{G_0} H_0)\;.
\end{align*} 

\begin{lemma}\label{lem:globalsection}
Let $G$ and $H$ be \'etale groupoids and $(X,p,q)$ be a principal $G$-$H$-bibundle. Then $X\cong \langle\phi\rangle$ as $G$-$H$-bibundles for some  localic functor $\phi:H\to G$ if and only if $q:X\to H_0$ has a global section.    
\end{lemma}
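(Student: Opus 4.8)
The plan is to prove the two implications separately; one is an immediate construction, the other requires the point-free machinery of Lemma~\ref{lem:localicreasoningwithglobalpoints}. For the easy direction, suppose $X\cong\langle\phi\rangle$ as $G$-$H$-bibundles for a localic functor $\phi=(\phi_0,\phi_1):H\to G$. The right anchor of $\langle\phi\rangle=G_1\otimes_{G_0}H_0$ is the projection $\pi_2$, and since $r\circ u=\ident_{G_0}$ for the units map $u$ and range map $r$ of $G$ we have $r\circ u\circ\phi_0=\phi_0$, so the pairing $\sigma:=\langle u\circ\phi_0,\ident_{H_0}\rangle$ is a well-defined map $H_0\to G_1\otimes_{G_0}H_0$ into the pullback of $r$ and $\phi_0$; it is a global section of $\pi_2$ because $\pi_2\circ\sigma=\ident_{H_0}$. (Informally, it sends each object of $H$ to the unit of $G$ at its $\phi_0$-image.)

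Conversely, let $\sigma:H_0\to X$ be a global section of $q$. Write $\act:G_1\otimes_{G_0}X\to X$ and $\bct:X\otimes_{H_0}H_1\to X$ for the left $G$-action and the right $H$-action, and $\theta:X\otimes_{H_0}X\to G_1$ for the map with $\langle\theta,\pi_2\rangle=\langle\act,\pi_2\rangle^{-1}$, which exists since $\langle\act,\pi_2\rangle$ is an isomorphism by principality (recall also that $q$ is then a local homeomorphism by Lemma~\ref{funct2, lemma: covering}, and $H_0\cong X/G$ by Lemma~\ref{McongX/G}). Set $\phi_0:=p\circ\sigma:H_0\to G_0$ and define $\phi_1:H_1\to G_1$ by
\[
\phi_1:=\theta\circ\bigl\langle\sigma\circ d,\ \bct\circ\langle\sigma\circ r,\ident_{H_1}\rangle\bigr\rangle\;,
\]
where $d,r$ here are the structure maps of $H$; the pairings make sense because $q\circ\sigma=\ident_{H_0}$ and $q\circ\bct=d\circ\pi_2$. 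In point-set terms $\phi_1(h)$ is the unique arrow of $G$ carrying $\sigma(r(h))h$ to $\sigma(d(h))$, two elements of $X$ lying over the same object $d(h)\in H_0$ and hence, by principality, in a single $G$-orbit. That $\phi$ is compatible with the structure maps $d$ and $r$, that is $d\circ\phi_1=\phi_0\circ d$ and $r\circ\phi_1=\phi_0\circ r$, is immediate from $d\circ\theta=p\circ\pi_1$, $r\circ\theta=p\circ\pi_2$ and from the fact that $\act$ and $\bct$ preserve the relevant anchors.

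It then remains to check that $\phi=(\phi_0,\phi_1)$ is a functor and that $E:=\act\circ(\ident_{G_1}\otimes\sigma):\langle\phi\rangle=G_1\otimes_{G_0}H_0\to X$ (informally $(g,y)\mapsto g\,\sigma(y)$) is an isomorphism of $G$-$H$-bibundles, from which $X\cong\langle\phi\rangle$ follows. Preservation of units by $\phi_1$ is Lemma~\ref{lem:localicreasoningwithglobalpoints}\eqref{lem:localicreasoningwithglobalpoints4} with $\boldsymbol x=\sigma$; preservation of composition comes from Lemma~\ref{lem:localicreasoningwithglobalpoints}\eqref{lem:localicreasoningwithglobalpoints5} and \eqref{lem:localicreasoningwithglobalpoints3} together with associativity of the $H$-action; compatibility with inversion follows from Lemma~\ref{lem:localicreasoningwithglobalpoints}\eqref{lem:localicreasoningwithglobalpoints1}. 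The map $E$ is well defined because the pullback condition $r(g)=\phi_0(y)=p(\sigma(y))$ of $\langle\phi\rangle$ is exactly what makes $g\,\sigma(y)$ defined; it is $G$-equivariant by associativity of $\act$, it preserves the anchors since $p(g\,\sigma(y))=d(g)$ and $q(g\,\sigma(y))=q(\sigma(y))=y$, and its $H$-equivariance is again an instance of Lemma~\ref{lem:localicreasoningwithglobalpoints}\eqref{lem:localicreasoningwithglobalpoints5} --- indeed this is the property that motivated the definition of $\phi_1$. Finally $E$ is invertible, with inverse $F:=\bigl\langle\theta\circ\langle\ident_X,\sigma\circ q\rangle,\ q\bigr\rangle:X\to G_1\otimes_{G_0}H_0$ (informally $x\mapsto(\theta(x,\sigma(q(x))),q(x))$); the identities $E\circ F=\ident_X$ and $F\circ E=\ident_{\langle\phi\rangle}$ reduce, via $q\circ\sigma=\ident_{H_0}$ and $q\circ\act=q\circ\pi_2$, to the two halves of $\langle\theta,\pi_2\rangle=\langle\act,\pi_2\rangle^{-1}$ and to freeness of the $G$-action (injectivity of $\langle\act,\pi_2\rangle$).

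The one real difficulty is that the ``point-set'' statements used above --- about $\phi_1$, about $\theta$, and about $E$ and $F$ --- must all be established point-freely. As noted after Lemma~\ref{lem:localicreasoningwithglobalpoints}, this is done by testing the relevant equalities of locale maps against arbitrary maps out of a test locale and applying that lemma repeatedly; the resulting verifications are routine but lengthy, much in the spirit of the (omitted) proof that a tensor product of principal bibundles is principal, so I would only lay out this structure and leave the diagram chases to the reader.
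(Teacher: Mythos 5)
Your overall strategy coincides with the paper's: the forward direction via $\langle u\circ\phi_0,\ident_{H_0}\rangle$, the choice $\phi_0=p\circ\sigma$, the mutually inverse maps $E$ and $F$ (the paper's $\psi$ and $\varphi$), and the appeal to Lemma~\ref{lem:localicreasoningwithglobalpoints} for the routine point-free verifications are all exactly the paper's argument. The one substantive divergence is your formula for $\phi_1$, and there you have a genuine error: the two arguments of $\theta$ are in the wrong order. Since $\theta(x,x')$ is by definition the unique arrow $g$ with $g x'=x$, your $\phi_1(h)=\theta\bigl(\sigma(d(h)),\,\sigma(r(h))h\bigr)$ is the arrow carrying the translated section $\sigma(r(h))h$ onto $\sigma(d(h))$; but the right $H$-action on $\langle\phi\rangle$ is right multiplication by $\phi_1(h)$ in the $G_1$ coordinate, so for $E=\act\circ(\ident_{G_1}\otimes\sigma)$ to be $H$-equivariant one needs $\phi_1(h)$ acting on the plain section to reproduce the $h$-translate, i.e.\ $\phi_1(h)=\theta\bigl(\sigma(r(h))h,\,\sigma(d(h))\bigr)$ (in your convention) --- the inverse of what you wrote. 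A concrete test: take $G=H$ a group $\Gamma$, $X=\Gamma$ the canonical bitorsor and $\sigma$ the unit; then $\theta(x,x')=x(x')^{-1}$, your formula gives $\phi_1(h)=h^{-1}$, which is an anti-homomorphism rather than a functor, and $E$ fails equivariance ($E((g,\ast)h)=gh^{-1}\neq gh=E(g,\ast)h$), whereas the swapped formula gives $\phi_1=\ident$. So the two steps you explicitly rest on Lemma~\ref{lem:localicreasoningwithglobalpoints}\eqref{lem:localicreasoningwithglobalpoints5} and \eqref{lem:localicreasoningwithglobalpoints3} --- functoriality of $\phi$ and the $H$-equivariance of $E$, ``the property that motivated the definition of $\phi_1$'' --- do not go through for the $\phi_1$ you defined; the fix is simply to interchange the two entries of $\theta$ (equivalently, compose with the inversion map $i$, cf.\ Lemma~\ref{lem:localicreasoningwithglobalpoints}\eqref{lem:localicreasoningwithglobalpoints1}).

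Two smaller remarks. First, your typing claim $q\circ\bct=d\circ\pi_2$ uses the convention opposite to the paper's: with the conventions forced by the paper's definition of $\langle\phi\rangle$ and of its $H$-action (second component $r\circ\pi_2$), the right-action square is over $r$, so $q\circ\bct=r\circ\pi_2$ and $\bct$ is defined on the pullback of $q$ and $d$; under those conventions your composite $\bct\circ\langle\sigma\circ r,\ident_{H_1}\rangle$ is not even well formed, and the corrected definition reads $\phi_1=\theta\circ\bigl\langle\bct\circ\langle\sigma\circ d,\ident_{H_1}\rangle,\ \sigma\circ r\bigr\rangle$. Second, on the positive side, your observation that the two arguments of $\theta$ must lie in the same fibre of $q$, so that the $H$-action has to enter the formula, is correct and is in fact more careful than the formula $\theta\circ\langle s\circ r,\,s\circ d\rangle$ displayed in the paper's own proof, which only typechecks when read as shorthand for the action-corrected map; but the orientation, not just the typing, has to come out right, and as written yours does not.
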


\begin{proof}
Suppose $(X,p,q)$ is a principal $G$-$H$-bibundle and $\psi:X\to G_1\otimes_{G_0} H_0$ is an isomorphism of $G$-$H$-bibundles, for some localic functor $(\phi_0,\phi_1)$ from $H$ to $G$. Notice that $q=\pi_2\circ \psi$ and let us define $s$ by $\psi^{-1}\circ \langle u\circ \phi_0,\ident_{H_0} \rangle $. Then it is clear that $s$ is a global section of $q$. Conversely, suppose that there exists a global section $s:H_0\to X$ of $q$. Let us define $\phi_0=p\circ s$, which is a map of locales from $H_0$ to $G_0$, and $\phi_1=\theta \circ \langle s\circ r, s\circ d \rangle$ (where $\theta=\pi_1\circ \langle \theta,\pi_2\rangle$ and $\langle \theta,\pi_2\rangle=\langle \act,\pi_2\rangle^{-1}$ exists by principality of $X$), which defines a map of locales from $H_1$ to $G_1$. Furthermore, we have the following two properties:

1. $\phi=(\phi_1,\phi_0)$ is a functor --- this corresponds to satisfying the conditions $\phi_1\circ i = i \circ \phi_1$, $\phi_1\circ u=u\circ\phi_1$, $d\circ \phi_1=\phi_0\circ d$, and $(m\circ (\phi_1\otimes\phi_1)\leq \phi\circ m)$, whose adaptation from topological groupoids to localic groupoids is straightforward using properties such as those of Lemma~\ref{lem:localicreasoningwithglobalpoints}.

2. $X\cong \langle \phi \rangle$ --- using again the properties of Lemma \ref{lem:localicreasoningwithglobalpoints} it is routine to verify that the maps of locales $\psi: \langle \phi \rangle\to X$ given by $\psi\circ \langle \boldsymbol g, \boldsymbol b \rangle:= \act\circ \langle  \boldsymbol g, s \circ  \boldsymbol b  \rangle$ for all ``points'' $\boldsymbol g$ and $\boldsymbol x$, and $\varphi: X\to \langle \phi \rangle$ given by $\varphi\circ \boldsymbol x:= \langle \theta\circ \langle \boldsymbol x, s\circ q\circ \boldsymbol x \rangle, q\circ \boldsymbol x \rangle$ for all ``points'' $\boldsymbol x$, are inverse to each other.
\end{proof}

\subsection{Morita equivalence}

In the literature there are several equivalent definitions of Morita equivalence for groupoids of various kinds, such as localic groupoids, topological groupoids, or Lie groupoids. For discrete groupoids the natural notion of equivalence is that of an equivalence of categories, which can be formulated in two ways~\cite{maclane}: either as (1)~a full and faithful functor which is surjective on objects up to isomorphisms; or as (2)~a pair of functors going in opposite directions together with a pair of natural isomorphisms defining an adjunction which is both a reflection and a coreflection. For discrete groupoids these definitions are equivalent (if we assume the axiom of choice), but this is not the case for more general groupoids, for which (2)~is in general too strong. But (1)~can be phrased for arbitrary groupoids, in particular for localic groupoids as we now describe: a morphism (localic functor) $\phi:G\to H$ of \'etale groupoids is an \emph{essential equivalence} if both of the following conditions hold (\cf\ \cite{MrcunPhD}*{pp.\ 18--19} or \emph{weak equivalence} in \cite{elephant}*{Lemma 5.3.12}).
\begin{enumerate}
\item
$
d\circ\pi_1: H_1\otimes_{H_0} G_0\to H_0
$
is an open surjection (where $H_1\otimes_{H_0} G_0$ is the pullback of $r$ and $\phi_0$);
\item The following is a pullback diagram:
\[
\xymatrix{
G_1\ar[rr]^{\phi_1}\ar[d]_{\langle d,r\rangle}&&H_1\ar[d]^{\langle d,r\rangle}\\
G_0\otimes G_0\ar[rr]_{\phi_0\otimes \phi_0}&&H_0\otimes H_0
}
\]
\end{enumerate}
The notion of \emph{Morita equivalence} of localic \'etale groupoids is then defined to be the least equivalence relation that contains the relation of being related by an essential equivalence, and it can be shown that $G$ and $H$ are Morita equivalent if and only if there exists a groupoid $K$ and a span of essential equivalences as follows:
\[
\xymatrix{
&&K\ar[dll]_\phi\ar[drr]^\psi\\
G&&&&H
}
\]
In~\cite{MrcunPhD} it is proved that, for topological groupoids (\cf~\cite{Mr99} for Lie groupoids), two \'etale groupoids $G$ and $H$ are Morita equivalent if and only if they are isomorphic in the category of Hilsum--Skandalis maps; that is, if and only if there exists a principal $G$-$H$-bibundle $X$ and a principal $H$-$G$-bibundle $Y$, and isomorphisms $\xi$ and $\zeta$ of $G$-$G$-bibundles and $H$-$H$-bibundles, respectively, as follows:
\[ \xi: X\otimes_H Y\stackrel\cong\longrightarrow G\quad\ \text{and}\quad\ \zeta:Y\otimes_G X\stackrel\cong\longrightarrow H\;.     \]
The same proof carries through to localic groupoids. Moreover, we highlight the following fact~\cite{MrcunPhD}*{Prop.\ II.1.6}: if $\phi:G\to H$ is an essential equivalence then $\langle\phi\rangle$ is an isomorphism in $\HSG$. Although not stated as such, the proof of this result relies on showing that the dual $\langle\phi\rangle^*$ is an inverse of $\langle\phi\rangle$. Hence we obtain the following result, already carried over to our setting of localic groupoids:

\begin{lemma}\label{lem:moritaequivdefs}
Let $G$ and $H$ be \'etale groupoids. The following conditions are equivalent:
\begin{enumerate}
\item\label{lem:moritaequivdefs1} $G$ and $H$ are Morita equivalent;
\item\label{lem:moritaequivdefs2} $G$ and $H$ are isomorphic in $\HSG$;
\item\label{lem:moritaequivdefs3} $G$ and $H$ are \emph{unitarily isomorphic} in $\HSG$; that is, there exists a principal $G$-$H$-bibundle $X$ with inverse in $\HSG$ given by $X^*$.
\end{enumerate}
\end{lemma}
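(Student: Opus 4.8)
The plan is to prove the cycle of implications $(\ref{lem:moritaequivdefs3})\Rightarrow(\ref{lem:moritaequivdefs2})\Rightarrow(\ref{lem:moritaequivdefs1})\Rightarrow(\ref{lem:moritaequivdefs3})$. The implication $(\ref{lem:moritaequivdefs3})\Rightarrow(\ref{lem:moritaequivdefs2})$ is trivial: if $X^*$ is an inverse of $X$ in $\HSG$ then $X$ is in particular an isomorphism, so $G$ and $H$ are isomorphic in $\HSG$. The implication $(\ref{lem:moritaequivdefs2})\Rightarrow(\ref{lem:moritaequivdefs1})$ is precisely the theorem of Mr\v{c}un recalled above, whose proof carries over to localic groupoids. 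Hence the real content is $(\ref{lem:moritaequivdefs1})\Rightarrow(\ref{lem:moritaequivdefs3})$, for which I shall use two facts stated earlier: that $G$ and $H$ are Morita equivalent exactly when there is a span of essential equivalences $G\xleftarrow{\ \phi\ }K\xrightarrow{\ \psi\ }H$, and that for an essential equivalence $\phi$ the bibundle $\langle\phi\rangle$ is an isomorphism in $\HSG$ whose inverse is the dual $\langle\phi\rangle^*$.

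So, starting from $(\ref{lem:moritaequivdefs1})$, I would fix such a span. Since $\langle\phi\rangle$ is a principal $G$-$K$-bibundle and $\langle\psi\rangle$ a principal $H$-$K$-bibundle, the dual $\langle\psi\rangle^*$ is a principal $K$-$H$-bibundle and the composite
\[
X:=\langle\phi\rangle\otimes_K\langle\psi\rangle^*
\]
is a principal $G$-$H$-bibundle, since composites of principal bibundles are principal. I claim that $X^*$ is an inverse of $X$ in $\HSG$, which is exactly condition $(\ref{lem:moritaequivdefs3})$. First I would identify this dual explicitly: the dual operation on bilocales is a strict involution, $(Z^*)^*=Z$ on the nose (directly from the defining formulas for the dual actions composed with the groupoid inversions twice), so Lemma~\ref{commutativityoftensor} --- applied to groupoid bilocales via the isomorphism $G$-$H$-$\Loc\cong\opens(G)$-$\opens(H)$-$\Loc$, or in its evident groupoid form --- gives
\[
X^*\cong(\langle\psi\rangle^*)^*\otimes_K\langle\phi\rangle^*=\langle\psi\rangle\otimes_K\langle\phi\rangle^*\;,
\]
a principal $H$-$G$-bibundle, which is the candidate inverse.

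It then remains to check that $X\otimes_H X^*$ and $X^*\otimes_G X$ are isomorphic, as bibundles, to the identity bibundles $G_1$ and $H_1$. This is a short computation using associativity of the tensor product of bilocales together with the fact that the dual inverts an essential equivalence; namely
\[
X\otimes_H X^*\;\cong\;\langle\phi\rangle\otimes_K\bigl(\langle\psi\rangle^*\otimes_H\langle\psi\rangle\bigr)\otimes_K\langle\phi\rangle^*\;\cong\;\langle\phi\rangle\otimes_K K_1\otimes_K\langle\phi\rangle^*\;\cong\;\langle\phi\rangle\otimes_K\langle\phi\rangle^*\;\cong\;G_1\;,
\]
using $\langle\psi\rangle^*\otimes_H\langle\psi\rangle\cong K_1$ and $\langle\phi\rangle\otimes_K\langle\phi\rangle^*\cong G_1$, and symmetrically $X^*\otimes_G X\cong\langle\psi\rangle\otimes_K(\langle\phi\rangle^*\otimes_G\langle\phi\rangle)\otimes_K\langle\psi\rangle^*\cong\langle\psi\rangle\otimes_K\langle\psi\rangle^*\cong H_1$. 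Hence $X^*$ is the inverse of $X$ in $\HSG$, which establishes $(\ref{lem:moritaequivdefs3})$ and closes the cycle.

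I do not expect a single hard step here; the parts requiring care are the bookkeeping of the left/right module structures --- keeping track of which groupoid acts on which side of each bibundle and getting the variances of $\langle\phi\rangle^*$ and $\langle\psi\rangle^*$ right --- and making sure every rebracketing above is a legitimate coherence isomorphism in the bicategory of bilocales. The one genuinely non-formal input, that $\langle\phi\rangle^*$ inverts $\langle\phi\rangle$ for an essential equivalence, is imported from Mr\v{c}un's work as recalled above.
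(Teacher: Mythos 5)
Your proposal is correct and follows essentially the same route as the paper: both reduce everything to Mr\v{c}un's theorem for $(\ref{lem:moritaequivdefs1})\Leftrightarrow(\ref{lem:moritaequivdefs2})$ and, for $(\ref{lem:moritaequivdefs1})\Rightarrow(\ref{lem:moritaequivdefs3})$, take a span of essential equivalences and set $X=\langle\phi\rangle\otimes_K\langle\psi\rangle^*$, with the invertibility of $\langle\phi\rangle$, $\langle\psi\rangle$ via their duals imported from Mr\v{c}un. The only differences are cosmetic: you arrange the implications as a cycle and spell out, via Lemma~\ref{commutativityoftensor} and associativity of the tensor product, the verification that $X^*$ inverts $X$, which the paper leaves implicit.
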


\begin{proof}
The equivalence $\eqref{lem:moritaequivdefs1}\Leftrightarrow\eqref{lem:moritaequivdefs2}$ has already been mentioned above. $\eqref{lem:moritaequivdefs3}\Rightarrow\eqref{lem:moritaequivdefs2}$ is trivial, and for $\eqref{lem:moritaequivdefs1}\Rightarrow\eqref{lem:moritaequivdefs3}$ assume that $G$ and $H$ are Morita equivalent and that there are essential equivalences $\phi$ and $\psi$ as follows:
\[
\xymatrix{
G&&\ar[ll]_\phi \ar[rr]^\psi K&& H
}
\]
Then, taking $X=\langle\phi\rangle\otimes_K\langle\psi\rangle^*$, we obtain an invertible principal $G$-$H$-bibundle $X$ whose inverse is $X^*$.
\end{proof}

\section{Hilsum--Skandalis sheaves}\label{sec:principalsheaves}

In this section we achieve the main goal of this paper, which is to study principal bundles, Hilsum--Skandalis maps, and Morita equivalence for inverse quantal frames. In doing so we shall restrict to groupoid sheaves rather than arbitrary actions, which for Morita equivalence (of \'etale groupoids) is enough, due to Lemma~\ref{funct2, lemma: covering}.

\subsection{Principal sections and free actions}

Let $Q$ be an inverse quantal frame and $X$ a $Q$-sheaf. The formula for the inner product of $X$ obtained in~\cite{GSQS} is recalled in \eqref{b_!} (\cf\ Theorem~\ref{innerproduct2}). In plain english, the formula states that the inner product of two local sections $s,t\in\sections_X$ is the largest element of $Q$ that ``translates'' $t$ to a subsection of $s$ by acting on $t$ the left. Hence, one expects the $Q$-action on $X$ to be free if and only if for all local sections $s$ the inner product $\langle s,s\rangle$ is an element below $e\in Q$.  A local section $s$ that possesses this property will be referred to as a \emph{principal section}, and it can be given the following four equivalence definitions:

\begin{lemma}\label{principalsections}
Let $Q$ be an inverse quantal frame, $X$ a $Q$-sheaf, and $s\in\sections_X$. The following conditions are equivalent:
\begin{enumerate}
\item\label{free1} $\langle s,s\rangle\in Q_0$;
\item\label{free2} $\spp_X(s)=\langle s,s\rangle$;
\item\label{free3pre} For all $a\in Q$ such that $as=s$ we have
$a\spp_X(s)=\spp_X(s)$.
\item\label{free3} For all $a\in Q$ such that $\spp_Q(a^*)=\spp_X(s)$ and $as=s$ we have
$a\in Q_0$.
\end{enumerate}
\end{lemma}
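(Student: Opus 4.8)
The plan is to prove the cycle of implications $\eqref{free1}\Rightarrow\eqref{free2}\Rightarrow\eqref{free3pre}\Rightarrow\eqref{free3}\Rightarrow\eqref{free1}$, using throughout the support formula \eqref{supportformulamod}, namely $\spp_X(s)=\langle s,s\rangle\wedge e$, together with the fact that Hilbert sections are regular (Lemma~\ref{subhilbertbasis}\eqref{subhb0}) and the basic identities for stably supported modules.

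\medskip

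For $\eqref{free1}\Rightarrow\eqref{free2}$: if $\langle s,s\rangle\in Q_0$ then $\langle s,s\rangle\le e$, so \eqref{supportformulamod} gives $\spp_X(s)=\langle s,s\rangle\wedge e=\langle s,s\rangle$. For $\eqref{free2}\Rightarrow\eqref{free3pre}$: suppose $\spp_X(s)=\langle s,s\rangle$ and $as=s$. First note $a\spp_X(s)\le\spp_X(s)$: indeed $a\spp_X(s)=a\langle s,s\rangle\le a\langle s,1_X\rangle=\langle as,1_X\rangle=\langle s,1_X\rangle$ by right skew-linearity of the inner product, and intersecting with $e$ (using that $\spp_X(s)$ is already below $e$, hence $a\spp_X(s)$ is too, since $\spp_X(s)\le e$ forces $a\spp_X(s)\le a e\le\langle s,1_X\rangle\wedge 1$... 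I must be careful here) — more cleanly, use $\spp_X(as)\le\spp_X(s)$ only gives the wrong direction, so instead argue: $\spp_X(s)=\spp_X(as)$ and apply the stably-supported identity, or simply compute $a\spp_X(s)=a\langle s,s\rangle=\langle as,s\rangle=\langle s,s\rangle=\spp_X(s)$ directly using $\langle as,s\rangle=a\langle s,s\rangle$ and $as=s$. That is the cleanest route and it immediately gives the equality in \eqref{free3pre}. Then $\eqref{free3pre}\Rightarrow\eqref{free3}$ is almost immediate: if moreover $\spp_Q(a^*)=\spp_X(s)$, then from $a\spp_X(s)=\spp_X(s)$ and $\spp_X(s)\le e$ we get $a\spp_X(s)\le e$; combining with $a\spp_Q(a^*)=a$ (a standard identity for supports, since $a\le\spp_Q(aa^*)a=\dots$, or use $a=\spp_Q(a^*)^*a^{**}$-type manipulations; the cleanest is $a\spp_Q(a^*)=a$, valid because $\spp$ satisfies $a^*\le\spp_Q(a^*)a^*$ hence $a=a^{**}\le a\spp_Q(a^*)$ and the reverse by $\spp_Q(a^*)\le e$) we get $a=a\spp_Q(a^*)=a\spp_X(s)\le e$, so $a\in Q_0$.

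\medskip

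For the closing implication $\eqref{free3}\Rightarrow\eqref{free1}$, the natural candidate is $a=\langle s,s\rangle$ itself. I would verify that $\langle s,s\rangle s=s$ (this is exactly regularity of the Hilbert section $s$, Lemma~\ref{subhilbertbasis}\eqref{subhb0}) and that $\spp_Q(\langle s,s\rangle^*)=\spp_X(s)$. For the latter, $\langle s,s\rangle^*=\langle s,s\rangle$ by the symmetry axiom of the inner product, so we need $\spp_Q(\langle s,s\rangle)=\spp_X(s)$, which is precisely one of the equalities in \eqref{eq:sppXsppQinner}. Then hypothesis \eqref{free3} applied to $a=\langle s,s\rangle$ yields $\langle s,s\rangle\in Q_0$, which is \eqref{free1}.

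\medskip

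I expect the main obstacle to be bookkeeping with the various support identities: ensuring that each elementary fact invoked ($a\spp_Q(a^*)=a$, the behaviour of $\spp_Q$ on involutes, the stability identities, $\langle as,t\rangle=a\langle s,t\rangle$ and its skew-linear counterpart) is genuinely available at the stated level of generality and pointing to the right displayed line in the preliminaries. None of it is deep, but the chain $\eqref{free2}\Rightarrow\eqref{free3pre}$ has a subtle orientation issue — one wants an \emph{equality}, not just $\le$ — and the clean fix is to avoid support-monotonicity arguments entirely and compute $a\spp_X(s)=a\langle s,s\rangle=\langle as,s\rangle=\langle s,s\rangle=\spp_X(s)$, which only uses left-linearity of the inner product and the hypotheses $\spp_X(s)=\langle s,s\rangle$ and $as=s$. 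Once that orientation pitfall is sidestepped the rest is routine.
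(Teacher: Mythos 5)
Your proof is correct, and for three of the four implications it coincides with the paper's: the same cycle $\eqref{free1}\Rightarrow\eqref{free2}\Rightarrow\eqref{free3pre}\Rightarrow\eqref{free3}\Rightarrow\eqref{free1}$, the same use of \eqref{supportformulamod} for the first step, the same one-line computation $a\spp_X(s)=a\langle s,s\rangle=\langle as,s\rangle=\langle s,s\rangle=\spp_X(s)$ for the second (your initial detour through monotonicity is indeed a dead end, and the clean computation you settle on is exactly the paper's), and the same appeal to $a\spp_Q(a^*)=a$ for the third. The only divergence is in $\eqref{free3}\Rightarrow\eqref{free1}$: you apply \eqref{free3} directly to $a=\langle s,s\rangle$, which requires the equality $\langle s,s\rangle s=s$, i.e.\ regularity of the Hilbert section $s$ (Lemma~\ref{subhilbertbasis}\eqref{subhb0}), together with $\spp_Q(\langle s,s\rangle)=\spp_X(s)$ from \eqref{eq:sppXsppQinner}; the paper instead applies \eqref{free3} to the witness $\langle s,s\rangle\vee\spp_X(s)$, which only needs the Hilbert-section inequality $\langle s,s\rangle s\le s$, since then $(\langle s,s\rangle\vee\spp_X(s))s=s$, and concludes $\langle s,s\rangle\vee\spp_X(s)\in Q_0$, hence $\langle s,s\rangle\in Q_0$. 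Your variant is slightly shorter and perfectly legitimate here because regularity is already available for $Q$-sheaves; the paper's variant is marginally more self-contained in that it does not invoke regularity, only the defining inequality of Hilbert sections.
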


\begin{proof}
\eqref{free1}$\Rightarrow$\eqref{free2}. If $\langle s,s\rangle\in Q_0$ then, by \eqref{supportformulamod}, we have $\spp_X(s)=\langle s,s\rangle\wedge e=\langle s,s\rangle$.

\eqref{free2}$\Rightarrow$\eqref{free3pre}. Assume \eqref{free2} and let $a\in Q$ be such that $as=s$. Then
\[
\spp_X(s)={\langle s,s\rangle} = {\langle as,s \rangle} = a{\langle s,s\rangle} = a\spp_X(s)\;.
\]

\eqref{free3pre}$\Rightarrow$\eqref{free3}. This is immediate because $a\spp_Q(a^*)=a$.

\eqref{free3}$\Rightarrow$\eqref{free1}.
The inequality
$\langle s,s\rangle s\le s$ gives us
\[
(\langle s,s\rangle \vee\spp_X(s))s=\langle s,s\rangle s\vee s=s\;,
\]
where $\spp_Q((\langle s,s\rangle \vee\spp_X(s))^*)=\spp_X(s)$ because $\spp_Q(\langle s,s\rangle)=\spp_X(s)$ [by \eqref{supportformulamod}], so using \eqref{free3} we conclude
$\langle s,s\rangle\vee\spp_X(s)\in Q_0$, whence $\langle s,s\rangle\in Q_0$.
\end{proof}

From here on the set of principal sections of a $Q$-sheaf $X$ will be denoted by $\psections_X$. Principal sections are closely related to $Q_\ipi$-valued inner products of local sections, as the following two lemmas show.

\begin{lemma}\label{pairsofprincipal}
Let $Q$ be an inverse quantal frame and $X$ a $Q$-sheaf. Let also $s,t\in\sections_X$ and $u\in Q_\ipi$.
\begin{enumerate}
\item\label{pairsofprincipal1} If $s,t\in\psections_X$ then $\langle s,t\rangle\in Q_\ipi$.
\item\label{pairsofprincipal2} If $t\in\psections_X$, $s=ut$ and $\spp_X(t)=\spp_Q(u^*)$ then $s\in\psections_X$, $\langle s,t\rangle=u$ and $\spp_X(s)=\spp_Q(u)$.
\item\label{pairsofprincipal3} If $\langle s,t\rangle=u$, $\spp_X(s)=\spp_Q(u)$ and $\spp_X(t)=\spp_Q(u^*)$ then $s,t\in\psections_X$ and both $s=ut$ and $t=u^*s$.
\end{enumerate}
\end{lemma}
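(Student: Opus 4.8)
The plan is to prove the three items by direct computation with the inner product, using three ingredients throughout: the support formula $\spp_X(x)=\langle x,x\rangle\wedge e$ of \eqref{supportformulamod}; the defining inequality $\langle x,s\rangle s\le x$ of a Hilbert section together with the support identity $\spp_X(x)x=x$; and the characterisation of principal sections in Lemma~\ref{principalsections}, whereby $s\in\psections_X$ exactly when $\langle s,s\rangle\in Q_0$, equivalently $\langle s,s\rangle=\spp_X(s)$. I shall also use repeatedly, for $u\in Q_\ipi$, the identities $uu^*u=u$, $\spp_Q(u)=uu^*$ and $\spp_Q(u^*)=u^*u$.

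For \eqref{pairsofprincipal1} I would write $\langle s,t\rangle\langle s,t\rangle^*=\langle s,t\rangle\langle t,s\rangle=\langle\langle s,t\rangle t,s\rangle$; since $t$ is a Hilbert section $\langle s,t\rangle t\le s$, so by monotonicity of the inner product this is $\le\langle s,s\rangle$, which lies below $e$ because $s$ is principal. Symmetrically $\langle s,t\rangle^*\langle s,t\rangle=\langle t,s\rangle\langle s,t\rangle=\langle\langle t,s\rangle s,t\rangle\le\langle t,t\rangle\le e$. Hence $\langle s,t\rangle\langle s,t\rangle^*\vee\langle s,t\rangle^*\langle s,t\rangle\le e$, \ie\ $\langle s,t\rangle\in Q_\ipi$.

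For \eqref{pairsofprincipal2} I would first compute $\langle s,t\rangle=\langle ut,t\rangle=u\langle t,t\rangle=u\spp_X(t)=u\spp_Q(u^*)=uu^*u=u$, using that $t$ is principal and the hypothesis $\spp_X(t)=\spp_Q(u^*)$; then, from $s=ut$ and right skew-linearity, $\langle s,s\rangle=\langle s,ut\rangle=\langle s,t\rangle u^*=uu^*\le e$, so $s\in\psections_X$ by Lemma~\ref{principalsections} and $\spp_X(s)=\langle s,s\rangle\wedge e=uu^*=\spp_Q(u)$.

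For \eqref{pairsofprincipal3} the inclusion $ut=\langle s,t\rangle t\le s$ is immediate from $t$ being a Hilbert section; for the reverse inclusion I would use $\spp_X(s)=\spp_Q(u)=uu^*$ to write $s=\spp_X(s)s=uu^*s$, and note $u^*s=\langle t,s\rangle s\le t$ since $s$ is a Hilbert section, whence $s=uu^*s\le ut$ and therefore $s=ut$; symmetrically $t=u^*s$, now using $\spp_X(t)=\spp_Q(u^*)$. Finally $\langle s,s\rangle=\langle s,ut\rangle=\langle s,t\rangle u^*=uu^*\le e$ and $\langle t,t\rangle=\langle t,u^*s\rangle=\langle t,s\rangle u=u^*u\le e$, so both $s$ and $t$ are principal by Lemma~\ref{principalsections}. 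The computations are all short; the one point that needs care is this reverse inclusion in \eqref{pairsofprincipal3}, where one must remember that the Hilbert-section inequality alone yields only $ut\le s$ and that the equality genuinely consumes the hypothesis $\spp_X(s)=\spp_Q(u)$ (and, symmetrically, $\spp_X(t)=\spp_Q(u^*)$ for $t=u^*s$).
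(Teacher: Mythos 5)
Your proof is correct and follows essentially the same route as the paper: direct computations with the inner product using left linearity, right skew-linearity, the Hilbert-section inequality, and the support formula $\spp_X(x)=\langle x,x\rangle\wedge e$. The only divergence is in part \eqref{pairsofprincipal3}, where the paper gets $s\le ut$ from the local-section property of $s$ together with stability of $\spp_X$ (computing $\spp_X(ut)=\spp_Q(u\spp_X(t))=\spp_Q(u)=\spp_X(s)$, so $ut=\spp_X(ut)s=s$), whereas you get it from $s=\spp_X(s)s=uu^*s$ and $u^*s=\langle t,s\rangle s\le t$; both arguments are valid.
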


\begin{proof}
We begin with the proof of \eqref{pairsofprincipal1}.
Let $s,t\in\psections_X$. The hypothesis $s\in\psections_X$ gives us the condition $\langle s,t\rangle\langle s,t\rangle^*\le e$:
\[
\langle s,t\rangle\langle s,t\rangle^*=\langle s,t\rangle\langle t,s\rangle
\le\langle s,s\rangle\le e\;.
\]
Similarly, the hypothesis $t\in\psections_X$ gives us the condition
$\langle s,t\rangle^*\langle s,t\rangle\le e$, and thus $\langle s,t\rangle\in Q_\ipi$.

Now the proof of \eqref{pairsofprincipal2}. With the given hypotheses we have
\[
\langle s,t\rangle = \langle ut,t\rangle=u\langle t,t\rangle=u\spp_X(t)=u\spp_Q(u^*)=u\;,
\]
so $\langle s,t\rangle=u$. And
\[
\langle s,s\rangle = \langle ut,ut\rangle = u\langle t,t\rangle u^*=u\spp_X(t) u^*=u\spp_Q(u^*)u^*=uu^*=\spp_Q(u)\;,
\]
so $s\in \psections_X$ and $\spp_X(s)=\spp_Q(u)$.

Finally we prove \eqref{pairsofprincipal3}. With the given hypotheses we have $ut=\langle s,t\rangle t\le s$ because $t$ is a Hilbert section. Hence, since $s$ is a local section, we obtain $ut=s$:
\begin{align*}
ut &= \spp_X(ut) s\\
&=\spp_Q(u \spp_X(t)) s \quad\quad\text{($\spp_X$ is stable)}\\
&=\spp_Q(u\spp_Q(u^*)) s\\
& = \spp_Q(u)s\\
& = \spp_X(s)s=s\;.
\end{align*}
We also obtain $t=\spp_X(t)t=\spp_Q(u^*)t=u^*ut=u^*s$, and
\begin{align*}
\langle s,s\rangle &= \langle ut,s\rangle = u\langle t,s\rangle =uu^*\le e\;,\\
\langle t,t\rangle &= \langle t,u^*s\rangle=\langle t,s\rangle u=u^*u\le e\;,
\end{align*}
so both $s$ and $t$ are principal sections.
\end{proof}

Let $Q$ be an inverse quantal frame and $X$ a $Q$-sheaf. A Hilbert basis $\sections$ of $X$ will be said to be \emph{principal} if $\sections\subset\psections_X$.

\begin{lemma}\label{princbasis}
Let $Q$ be an inverse quantal frame, $X$ a $Q$-sheaf and $\sections\subset X$ a Hilbert basis. The following conditions are equivalent:
\begin{enumerate}
\item\label{princbasis1} $\sections$ is principal;
\item\label{princbasis2} $\langle s,t\rangle\in Q_\ipi$ for all $s,t\in\sections$.
\end{enumerate}
\end{lemma}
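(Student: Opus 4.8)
The plan is to derive $\eqref{princbasis1}\Leftrightarrow\eqref{princbasis2}$ straight from Lemma~\ref{pairsofprincipal}, after one preliminary observation: the elements of any Hilbert basis are automatically local sections. Indeed, for $s\in\sections$ and any $x\in X$ the term $\langle x,s\rangle s$ occurs in the join $\V_{t\in\sections}\langle x,t\rangle t=x$, so $\langle x,s\rangle s\le x$; thus $s$ is a Hilbert section and hence, $X$ being a $Q$-sheaf, a local section. Therefore $\sections\subset\sections_X$ and Lemma~\ref{pairsofprincipal} is applicable to any pair of elements of $\sections$. The implication $\eqref{princbasis1}\Rightarrow\eqref{princbasis2}$ then requires nothing further: if $\sections$ is principal then $\sections\subset\psections_X$, and Lemma~\ref{pairsofprincipal}\eqref{pairsofprincipal1} gives $\langle s,t\rangle\in Q_\ipi$ for all $s,t\in\sections$.

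For $\eqref{princbasis2}\Rightarrow\eqref{princbasis1}$ I would fix $s\in\sections$ and set $u:=\langle s,s\rangle$. Applying the hypothesis with $t=s$ gives $u\in Q_\ipi$, and $u$ is self-adjoint since $u^*=\langle s,s\rangle^*=\langle s,s\rangle=u$. The hypotheses of Lemma~\ref{pairsofprincipal}\eqref{pairsofprincipal3}, read for the diagonal pair $(s,s)$, are now all in place: $\langle s,s\rangle=u$ holds by definition, and $\spp_X(s)=\spp_Q(\langle s,s\rangle)=\spp_Q(u)=\spp_Q(u^*)$ by \eqref{eq:sppXsppQinner} together with self-adjointness. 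Hence Lemma~\ref{pairsofprincipal}\eqref{pairsofprincipal3} yields $s\in\psections_X$; since $s\in\sections$ was arbitrary, $\sections\subset\psections_X$, i.e.\ $\sections$ is principal.

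The argument is essentially bookkeeping, so I do not anticipate any genuine obstacle. The one point worth isolating is that the support side-conditions feeding Lemma~\ref{pairsofprincipal}\eqref{pairsofprincipal3} on the pair $(s,s)$ are automatic from \eqref{eq:sppXsppQinner}, so the single substantive ingredient needed for that lemma to apply --- namely $\langle s,s\rangle\in Q_\ipi$ --- is exactly what hypothesis~\eqref{princbasis2} supplies (with $t=s$); this makes the equivalence essentially forced once the preliminary observation that $\sections\subset\sections_X$ is recorded.
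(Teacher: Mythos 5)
Your proof is correct, but the nontrivial direction is argued differently from the paper. The implication \eqref{princbasis1}$\Rightarrow$\eqref{princbasis2} is the same in both (Lemma~\ref{pairsofprincipal}\eqref{pairsofprincipal1}), and your preliminary observation that a Hilbert basis consists of Hilbert sections, hence of local sections of the $Q$-sheaf, is valid (though for the forward direction it is not even needed, since $\psections_X\subset\sections_X$ by definition). For \eqref{princbasis2}$\Rightarrow$\eqref{princbasis1} the paper argues globally via Parseval's identity: $\langle s,s\rangle=\V_{t\in\sections}\langle s,t\rangle\langle s,t\rangle^*\le e$, so $\langle s,s\rangle\in Q_0$ and Lemma~\ref{principalsections}\eqref{free1} finishes the proof; this uses the hypothesis for all pairs and the fact that $\sections$ is a Hilbert basis. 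You instead use only the diagonal instances: $u:=\langle s,s\rangle\in Q_\ipi$ is self-adjoint, and \eqref{eq:sppXsppQinner} gives $\spp_X(s)=\spp_Q(u)=\spp_Q(u^*)$, so Lemma~\ref{pairsofprincipal}\eqref{pairsofprincipal3} applied to the pair $(s,s)$ yields $s\in\psections_X$; I checked that nothing in that lemma or its proof obstructs taking $s=t$. Your route buys a slightly sharper statement -- any local section $s$ of a $Q$-sheaf with $\langle s,s\rangle\in Q_\ipi$ is already principal, so condition \eqref{princbasis2} could be weakened to the diagonal pairs, the basis hypothesis serving only to guarantee $\sections\subset\sections_X$ -- whereas the paper's Parseval argument is shorter and exploits the off-diagonal inner products, which in any case lie in $Q_\ipi$ once \eqref{princbasis1} holds.
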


\begin{proof}
The implication \eqref{princbasis1}$\Rightarrow$\eqref{princbasis2} follows immediately from Lemma~\ref{pairsofprincipal}. In order to prove
\eqref{princbasis2}$\Rightarrow$\eqref{princbasis1} assume that $\langle s,t\rangle\in Q_\ipi$ for all $s,t\in\sections$, and let $s\in\sections$. Using Parseval's identity we obtain
\[
\langle s,s\rangle = \V_{t\in\sections} \langle s,t\rangle\langle t,s\rangle
=\V_{t\in \sections}\langle s,t\rangle\langle s,t\rangle^*\le e\;,
\]
and thus $\langle s,s\rangle\in Q_0$, so $s\in\psections_X$.
\end{proof}

Let $Q$ be an inverse quantal frame and $X$ a $Q$-sheaf.
We note that the following conditions are equivalent:
\begin{enumerate}
\item A principal Hilbert basis of $X$ exists;
\item $\psections_X$ is a Hilbert basis of $X$;
\item $\V\psections_X=1_X$ (\cf\ Lemma~\ref{subhilbertbasis}).
\end{enumerate}
If these equivalent conditions hold we say that $X$ is a \emph{principally covered} $Q$-sheaf. The following fact confirms that the intuition behind the definition of principal section (\cf\ text immediately preceding Lemma~\ref{principalsections}) is correct:

\begin{theorem}\label{thm:freeness}
Let $G$ be an \'etale groupoid with quantale $Q:=\opens(G)$ and $X$ a $G$-sheaf. If $X$ is a principally covered $Q$-sheaf the action of $G$ on $X$ is free; that is,
\[
\langle \act,\pi_2\rangle: G_1\otimes_{G_0} X\to X\otimes_{X/G} X
\]
is a regular monomorphism in $\Loc$.
\end{theorem}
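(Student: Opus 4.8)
The plan is to show that the inverse image frame homomorphism $\langle\act,\pi_2\rangle^*$ is surjective; recall that a morphism of $\Loc$ is a regular monomorphism precisely when its inverse image is a surjective frame homomorphism. Put $Q_0=\opens(G_0)$. Since the product in $B$-$\Loc$ is the tensor product of $\opens(B)$-modules and $X/G=I(X)$, we have $\opens(G_1\otimes_{G_0}X)=Q\otimes_{Q_0}X$ and $\opens(X\otimes_{X/G}X)=X\otimes_{I(X)}X$, and by the universal property of the pullback $X\otimes_{X/G}X$ (\cf\ the proof of Theorem~\ref{directimagepairing}) the map $\langle\act,\pi_2\rangle^*$ is the copairing $[\act^*,\pi_2^*]$; on elementary tensors, using \eqref{eq:rightadjformula2} for $\act^*$, the identity $\pi_2^*(x')=1_Q\otimes x'$, and the meet formula $(a\otimes x)\wedge(a'\otimes x')=(a\wedge a')\otimes(x\wedge x')$ valid in a pullback of locales,
\[
\langle\act,\pi_2\rangle^*(x\otimes x')=\act^*(x)\wedge\pi_2^*(x')=\V_{u\in Q_\ipi}u\otimes(u^*x\wedge x')\in Q\otimes_{Q_0}X\,.
\]

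Because $Q$ is an inverse quantal frame, $\V Q_\ipi=1_Q$; because $X$ is principally covered, $\V\psections_X=1_X$; and $Q_\ipi$ and $\psections_X$ are closed under meets with arbitrary elements of $Q$ and $X$ respectively (for $\psections_X$ this uses Lemma~\ref{principalsections}, monotonicity of the inner product, and the fact that $\sections_X$ and $\downsegment(e)$ are downwards closed). Hence $Q\otimes_{Q_0}X$ is join-generated by the elements $s\otimes t$ with $s\in Q_\ipi$ and $t\in\psections_X$, so it suffices to show that each such $s\otimes t$ lies in the image of $\langle\act,\pi_2\rangle^*$, and in fact that
\[
\langle\act,\pi_2\rangle^*(st\otimes t)=s\otimes t\qquad(s\in Q_\ipi,\ t\in\psections_X)\,.
\]
For ``$\ge$'' it is enough to keep the term $u=s$ in the join above: since $s^*s\in Q_0$ and $X$ satisfies the anchor condition we have $s^*st\wedge t=s^*st$, and transporting $s^*s$ across the relative tensor $\otimes_{Q_0}$ and using $ss^*s=s$ gives $s\otimes(s^*st\wedge t)=s\otimes(s^*s)t=(ss^*s)\otimes t=s\otimes t$.

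For ``$\le$'' one must show $u\otimes(u^*st\wedge t)\le s\otimes t$ for every $u\in Q_\ipi$, and this is where principality of $t$ enters. First, $st$ is itself a principal section, since $\langle st,st\rangle=s\langle t,t\rangle s^*=s\,\spp_X(t)\,s^*\le ss^*\le e$ (using $\spp_X(t)=\langle t,t\rangle\le e$ and Lemma~\ref{principalsections}); likewise $u^*st$ is principal, and hence so is the subsection $y:=u^*st\wedge t$ of $t$. Since $y\le t$ we have $u\otimes y\le 1_Q\otimes t$, so by the meet formula it is enough to prove $u\otimes y\le s\otimes 1_X$; as $\pi_1\colon G_1\otimes_{G_0}X\to G_1$ is open (a pullback of the local homeomorphism $p$), this is equivalent, by Lemma~\ref{directimage}, to the inequality $u\,\spp_X(u^*st\wedge t)\le s$ in $Q$. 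The latter is obtained by computing $\spp_X(u^*st\wedge t)$ from the stability identities for the support of a $Q$-sheaf (\eqref{supportformulamod} and the identities following it, notably $\spp_X(ax)=\spp_Q(a\,\spp_X(x))$ and $\spp_X(z)=\langle z,z\rangle$ for principal $z$) together with the identities for partial units such as \eqref{partialunitaction}. Granting the claim, the image of $\langle\act,\pi_2\rangle^*$ contains a join-generating set, so $\langle\act,\pi_2\rangle^*$ is surjective and $\langle\act,\pi_2\rangle$ is a regular monomorphism.

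The main obstacle is precisely the inequality $u\,\spp_X(u^*st\wedge t)\le s$: every ingredient is routine, but the support identities have to be arranged so that the partial unit $u$ cancels completely rather than being merely trimmed to $\spp_Q(u)u$, and this is exactly what forces one to use that all three of $st$, $u^*st$ and $u^*st\wedge t$ are principal sections --- \ie\ that $X$ is principally covered. If that hypothesis is dropped, the terms with $u\ne s$ need not lie below $s\otimes t$ and the displayed equality genuinely fails, so there is no way around using principality at this point.
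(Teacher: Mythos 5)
Your overall strategy is the paper's: identify $\langle\act,\pi_2\rangle^*$ with the copairing $[\act^*,\pi_2^*]$, note that $Q\otimes_{Q_0}X$ is join-generated by the tensors $s\otimes t$ with $s\in Q_\ipi$ and $t\in\psections_X$, and exhibit $st\otimes t$ as a preimage of $s\otimes t$. Your ``$\ge$'' half is correct, and so is the reduction of ``$\le$'' (via $(\pi_1)_!(q\otimes x)=q\,\spp_X(x)$ and the meet formula) to the inequality $u\,\spp_X(u^*st\wedge t)\le s$ for all $u\in Q_\ipi$. But that inequality is the crux of the theorem --- it is the only place where principality of $t$ does any work --- and you do not prove it: you assert that it follows from ``routine'' support identities and then concede that arranging them so that $u$ cancels is ``the main obstacle.'' As written, this is a genuine gap, not a deferred routine verification; the elementary support identities by themselves only trim $u$ to things like $\spp_Q(u)u$, exactly the difficulty you name. (A secondary slip: calling $st$ and $u^*st$ ``principal sections'' presupposes they are local sections, which $\langle st,st\rangle\le e$ alone does not give; in any case none of those auxiliary claims is what the key inequality actually needs.)

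The missing step is supplied by Theorem~\ref{innerproduct2}, which you never invoke: it gives $\V_{u\in Q_\ipi}u\,\spp_X\bigl(u^*(st)\wedge t\bigr)=\langle st,t\rangle=s\langle t,t\rangle\le s$, since $\langle t,t\rangle\le e_Q$ for a principal section $t$ (Lemma~\ref{principalsections}); hence every individual term $u\,\spp_X(u^*st\wedge t)$ lies below $s$, which closes your argument. This is also essentially how the paper argues, but more economically: it first normalizes so that $\spp_Q(u^*)=\spp_X(t)$, then computes $[\act^*,\pi_2^*](ut\otimes t)=\V_v v\otimes(v^*ut\wedge t)=\V_v v\,\spp_X(v^*ut\wedge t)\otimes t=\langle ut,t\rangle\otimes t$ (using that $t$ is a local section to write $v^*ut\wedge t=\spp_X(v^*ut\wedge t)t$, then Theorem~\ref{innerproduct2}), and concludes $\langle ut,t\rangle=u$ by Lemma~\ref{pairsofprincipal}, so the exact equality $[\act^*,\pi_2^*](ut\otimes t)=u\otimes t$ comes out in one computation, with no need for your two-inequality split. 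In short: your route is the paper's, but the decisive step is left unproved; inserting the appeal to Theorem~\ref{innerproduct2} (and Lemma~\ref{principalsections}) at that point repairs it.
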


\begin{proof}
Let $X$ be be a free $Q$-sheaf with $G$-action $\act$, and consider an element $u\otimes t\in Q\otimes_{Q_0} X$ where $u\in Q_\ipi$ and $t\in\psections_X$. Without loss of generality we shall assume that $\spp_Q(u^*)=\spp_X(t)$. Let $s=ut$. Then
\begin{align*}
[ \act^*,\pi_2^*](s\otimes t) &= \V_{v\in Q_\ipi} v\otimes(v^* s\wedge t) & \text{[by \eqref{eq:rightadjformula2}]}\\
&= \V_{v\in Q_\ipi} v\otimes\spp_X(v^* s\wedge t)t & \text{($t$ is a local section)}\\
&=\V_{v\in Q_\ipi} v\spp_X(v^* s\wedge t)\otimes t\\
&=\langle s,t\rangle\otimes t &\text{(by Theorem~\ref{innerproduct2})}\\
&=u\otimes t&\text{[by Lemma~\ref{pairsofprincipal}\eqref{pairsofprincipal2}]}\;.
\end{align*}
Let $\xi$ be an arbitrary element of $Q\otimes_{Q_0} X$. This is necessarily of the form $\V_i u_i\otimes t_i$ with $u_i\in Q_\ipi$, $t_i\in\psections_X$ and $\spp_Q(u_i^*)=\spp_X(t_i)$ for all $i$, and thus for each $i$ we have
$u_i\otimes t_i = [\act^*,\pi_2^*](u_i t_i\otimes t_i)$. Hence,
\[
\xi=[\act^*,\pi_2^*]\bigl(\V_i u_i t_i\otimes t_i\bigr)\;,
\]
so $\langle\act,\pi_2\rangle^*$ is a surjective homomorphism of locales.
\end{proof}

\subsection{Bisheaves and transitive actions}\label{sec:transitivity}

Let $G$ be an \'etale groupoid. By a \emph{$G$-bisheaf} will be meant a $G$-sheaf $X$ for which the projection to the orbit locale $\pi:X\to X/G$ is a local homeomorphism. Analogously, for an inverse quantal frame $Q$ we define a \emph{$Q$-bisheaf} to be a $Q$-sheaf $X$ whose right $I(X)$-locale structure makes it a (right) $I(X)$-sheaf. Clearly, $X$ is a $G$-bisheaf if and only if it is an $\opens(G)$-bisheaf. Given a $Q$-bisheaf $X$ we shall write $\spp_X:X\to Q_0$ for the usual support of $X$, and $\tspp:X\to I(X)$ for the $I(X)$-valued support, which for all $x\in X$ is given by
\[
\tspp(x) = 1_Qx
\]
(\cf\ Example~\ref{exm:openIXlocale}).
As usual the inner product of the $Q$-sheaf structure is denoted by
\[
\langle-,-\rangle:X\times X\to Q\;,
\]
and the $I(X)$-sheaf inner product of is denoted by
\[
[-,-]:X\times X\to I(X)\;.
\]
Note that for all $x,y\in X$ the latter is defined by
\[
[x,y]=\tspp(x\wedge y)=1_Q(x\wedge y)\;.
\]
We shall write $\sections_X$ for the set of local sections of $X$ regarded as a $Q$-sheaf, and $\tsections_X$ for the set of local sections of $X$ regarded as an $I(X)$-sheaf. The intersection of these sets is referred to as the set of \emph{local bisections} of $X$ and we denote it by
\[
\bisections_X:=\sections_X\cap\tsections_X\;.
\]

\begin{lemma}\label{lem:bisheafbasis}
Let $Q$ be an inverse quantal frame and $X$ a $Q$-bisheaf. Then
$\bisections_X$ is a Hilbert basis with respect to both sheaf structures of $X$.
\end{lemma}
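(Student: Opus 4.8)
The plan is to exhibit $\bisections_X$ as a downwards closed set of Hilbert sections covering $1_X$ for each of the two sheaf structures of $X$, so that Lemma~\ref{subhilbertbasis}\eqref{subhb2} applies in both cases.

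First I would record the elementary fact that the local sections of an open $B$-locale form a downwards closed set: if $s$ is a local section and $s'\le s$, then for each $x\le s'$ we have $\spp_X(x)s'\le\spp_X(x)s=x$ by \eqref{localsection}, while $x=\spp_X(x)x\le\spp_X(x)s'$ by \eqref{sppcondition} together with $x\le s'$; hence $\spp_X(x)s'=x$, \ie\ $s'$ is a local section. The same argument with $\spp_X$ replaced by $\tspp$ (the right action of $I(X)$ being by binary meet, so that there is no left/right issue) shows that $\tsections_X$ is downwards closed as well, and hence so is $\bisections_X=\sections_X\cap\tsections_X$.

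Next I would show that $\V\bisections_X=1_X$. Since $X$ is a $Q$-bisheaf it is both a $Q$-sheaf and an $I(X)$-sheaf, so $\V\sections_X=1_X$ and $\V\tsections_X=1_X$. For all $s\in\sections_X$ and $t\in\tsections_X$ the element $s\wedge t$ lies below both $s$ and $t$, so $s\wedge t\in\bisections_X$ by the previous step. As $X$ is a frame it follows that
\[
\V\bisections_X\ \ge\ \V_{\substack{s\in\sections_X\\ t\in\tsections_X}} s\wedge t\ =\ \Bigl(\V\sections_X\Bigr)\wedge\Bigl(\V\tsections_X\Bigr)\ =\ 1_X\wedge 1_X\ =\ 1_X\,,
\]
whence $\V\bisections_X=1_X$.

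Finally, since for a $Q$-sheaf the local sections coincide with the Hilbert sections, $\bisections_X$ is a downwards closed set of Hilbert sections of the $Q$-sheaf $X$ with $\V\bisections_X=1_X$, and Lemma~\ref{subhilbertbasis}\eqref{subhb2} yields that it is a Hilbert basis for the $Q$-sheaf structure. The identical reasoning applies to $X$ regarded as a sheaf on the locale $I(X)$ (a locale being in particular an inverse quantal frame with trivial involution, so that Lemma~\ref{subhilbertbasis} is available there), giving that $\bisections_X$ is also a Hilbert basis for the $I(X)$-sheaf structure. The only step that is not purely formal is the downward closure of local sections, which is exactly what makes the meets $s\wedge t$ honest local bisections; the rest is a direct appeal to the sheaf results already established.
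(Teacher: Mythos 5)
Your proof is correct and follows essentially the paper's route: both arguments rest on the fact that subsections of local sections are again local sections and then invoke Lemma~\ref{subhilbertbasis}. The paper obtains join-density of $\bisections_X$ directly, by writing each $t\in\tsections_X$ as a join of $Q$-sections (which lie in $\bisections_X$ since they are below $t$) and applying Lemma~\ref{subhilbertbasis}\eqref{subhb1}, whereas you prove downward closure of $\bisections_X$ together with $\V\bisections_X=1_X$ and use Lemma~\ref{subhilbertbasis}\eqref{subhb2}; the difference is cosmetic.
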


\begin{proof}
Every element $x\in X$ can be written as $x=\V_i s_i$ for some family $(s_i)$ in $\sections_X$. This is true, in particular, for any $s\in \tsections_X$, in which case the elements $s_i$ are in $\bisections_X$, and so $\bisections_X$ is join-dense in $X$. The conclusion that it is a Hilbert basis (with respect to both inner products) follows from Lemma~\ref{subhilbertbasis}.
\end{proof}

For a topological groupoid $G$ and a continuous $G$-action on a topological space $X$ the action of $G$ is tautologically transitive over the orbit space; that is, for all $x,y\in X$ in the same orbit there is (by definition of orbit) a groupoid arrow $g$ such that $gy=x$. This is equivalent to saying that the pairing map $\langle\act,\pi_2\rangle:G_1\times_{G_0} X\to X\times_{X/G} X$ is surjective. The following result shows that something analogous happens for localic \'etale groupoids and bisheaves:

\begin{theorem}\label{thm:transitivity}
Let $G$ be an \'etale groupoid and $X\equiv (X,p,\act)$ a $G$-bisheaf. Then the pairing map
\[
\langle\act,\pi_2\rangle:G_1\otimes_{G_0} X\to X\otimes_{X/G} X
\]
is an epimorphism in $\Loc$.
\end{theorem}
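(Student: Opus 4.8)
The plan is to reduce the statement to an identity between sup-lattice maps and then verify it on a Hilbert basis. Since $\langle\act,\pi_2\rangle$ is an epimorphism in $\Loc$ as soon as its inverse image $\langle\act,\pi_2\rangle^*=[\act^*,\pi_2^*]$ is injective, and $[\act^*,\pi_2^*]$ is injective once it admits a left inverse, it suffices to show that $\langle\act,\pi_2\rangle$ is semiopen — so that $\langle\act,\pi_2\rangle_!$ exists — and that $\langle\act,\pi_2\rangle_!\circ[\act^*,\pi_2^*]=\ident$ on $\opens(X\otimes_{X/G}X)$. For the first point, observe that $G_1\otimes_{G_0}X$ is étale over $X$ via $\pi_2$ (a pullback of $d$) and that $X$ is étale over $X/G$ because $X$ is a $G$-bisheaf, so $G_1\otimes_{G_0}X$ is an $(X/G)$-sheaf; by Theorem~\ref{XOX}, $X\otimes_{X/G}X$ is an $(X/G)$-sheaf as well; and $\langle\act,\pi_2\rangle$ is a morphism of $\Loc/(X/G)$, since it commutes with the projections to $X/G$ (using $\pi\circ\act=\pi\circ\pi_2$). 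Hence $\langle\act,\pi_2\rangle$ is a sheaf homomorphism, in particular open, and $\langle\act,\pi_2\rangle_!$ exists.

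Because $\langle\act,\pi_2\rangle_!$ and $[\act^*,\pi_2^*]$ both preserve joins, it is enough to check $\langle\act,\pi_2\rangle_!\bigl([\act^*,\pi_2^*](\xi)\bigr)=\xi$ for $\xi$ in a join-dense subset of $\opens(X\otimes_{X/G}X)$, and by Lemma~\ref{lem:bisheafbasis} and Theorem~\ref{XOX} the set $\{s\otimes t\st s,t\in\bisections_X\}$ of tensors of local bisections is such a subset, each of whose elements is moreover an $(X/G)$-local section of $X\otimes_{X/G}X$. Fixing $s,t\in\bisections_X$, exactly the computation carried out in the proof of Theorem~\ref{thm:freeness} gives
\[
[\act^*,\pi_2^*](s\otimes t)=\V_{u\in Q_\ipi}u\otimes(u^*s\wedge t)\;,
\]
and each $u\otimes(u^*s\wedge t)$ is an $(X/G)$-local section of $G_1\otimes_{G_0}X$ (here $u\in Q_\ipi$ and $u^*s\wedge t\in\bisections_X$, as local bisections are downwards closed). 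Applying $\langle\act,\pi_2\rangle_!$ term by term, Theorem~\ref{directimagepairing} together with the direct-image formulas $\act_!(a\otimes x)=ax$ (as $\act_!$, being the left adjoint of $\act^*=\alpha_*$, is the action $\alpha$) and $(\pi_2)_!(a\otimes x)=\spp_Q(a^*)x$ (\cf\ \eqref{directimagepi2}) reduces, via $\spp_Q(u^*)=u^*u$, $uu^*u=u$ and $b(x\wedge y)=bx\wedge y$ for $b\in Q_0$, to
\[
\langle\act,\pi_2\rangle_!\bigl(u\otimes(u^*s\wedge t)\bigr)=\bigl(u(u^*s\wedge t)\bigr)\otimes(u^*s\wedge t)\;.
\]
Hence $\langle\act,\pi_2\rangle_![\act^*,\pi_2^*](s\otimes t)=\V_{u\in Q_\ipi}\bigl(u(u^*s\wedge t)\bigr)\otimes(u^*s\wedge t)$.

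What remains is to identify this join with $s\otimes t$. The inequality $\le$ is the counit of $\langle\act,\pi_2\rangle_!\dashv[\act^*,\pi_2^*]$, so nothing need be proved there. For the reverse inequality I would use that $s\otimes t$ is an $(X/G)$-local section, so that by~\eqref{localsection} it suffices to show that the two sides have the same $(X/G)$-valued support; by the description of the support of a tensor of sheaves obtained in the proof of Theorem~\ref{XOX}, this support is $x\otimes y\mapsto\tspp(x)\wedge\tspp(y)=1_Qx\wedge 1_Qy$. One is thus left to prove
\[
\V_{u\in Q_\ipi}\bigl(1_Q u(u^*s\wedge t)\wedge 1_Q(u^*s\wedge t)\bigr)=1_Qs\wedge 1_Qt\;,
\]
which follows from $1_Q(uz)=1_Qz$ for $u\in Q_\ipi$, from $\V_{u\in Q_\ipi}u^*s=1_Qs$ combined with infinite distributivity, and from \eqref{opennessIX} — available here precisely because $X$ is a $G$-bisheaf, hence open as a right $I(X)$-locale (Example~\ref{exm:openIXlocale}) — which together collapse the left-hand side through $1_Q(1_Qs\wedge t)=1_Qs\wedge 1_Qt$.

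The hard part is this final support-matching identity: it is the quantale-theoretic incarnation of transitivity of the orbit equivalence relation (``an orbit meeting both $s$ and $t$ is linked by a groupoid arrow''), and it is the only place where the $G$-bisheaf hypothesis is consumed, via~\eqref{opennessIX}. The reduction in the first paragraph also warrants some care, resting on the standard — but, localically, not entirely formal — fact that a map of $B$-sheaves lying over $B$ is automatically a local homeomorphism, and the term-by-term computation requires keeping track of which pullback the coordinates $a\otimes x$ refer to when applying the direct-image formulas.
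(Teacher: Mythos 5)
Your overall strategy is viable but genuinely different in organization from the paper's. The paper never discusses openness of $\langle\act,\pi_2\rangle$ in the proof of the theorem: it builds a splitting of $\langle\act,\pi_2\rangle^*$ by hand, extending the assignment $u\otimes t\mapsto ut\otimes t$ (for $u\in Q_\ipi$, $t\in\bisections_X$) to a sup-lattice homomorphism $\varphi^\sharp:Q\otimes_{Q_0}X\to X\otimes_{I(X)}X$ via the universal property of the tensor product (checking preservation of the relevant joins and of the $Q_0$-balancing), and then verifies $\varphi^\sharp\bigl([\act^*,\pi_2^*](s\otimes t)\bigr)=s\otimes t$ on the join-dense set of elements $s\otimes t$ with $s,t\in\bisections_X$; the final collapse is done not by a support comparison but by rewriting $\V_u(s\wedge ut)\otimes t=(s\wedge 1_Qt)\otimes t=s\otimes(1_Qt\wedge t)=s\otimes t$ using the balancing over $I(X)$. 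Semiopenness of $\langle\act,\pi_2\rangle$ and the identification of $\varphi^\sharp$ with the direct image are deferred to the corollary that follows. Your route (\'etaleness of the pairing over $X/G$, the adjunction, Theorem~\ref{directimagepairing}, then support matching) would, if completed, give the theorem and most of that corollary in one go, at the price of more machinery; the paper's argument is leaner and needs no openness at all, since a splitting of the inverse image in $\SL$ already gives injectivity.

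The genuine gap is in your appeal to Theorem~\ref{directimagepairing}: its formula is only stated for $(X/G)$-local sections of $G_1\otimes_{G_0}X$, and you assert without proof that each $u\otimes(u^*s\wedge t)$ is such a section; the parenthetical justification (downward closure of $\bisections_X$) only concerns the second tensor factor. What is actually needed is (a) that $u\otimes z$ is a local section of $\pi_2:G_1\otimes_{G_0}X\to X$ whenever $u\in Q_\ipi$ --- this is true but is not Theorem~\ref{XOX}, whose sections are sections over the common base $G_0$, not over $X$; it requires a separate computation with $(\pi_2)_!(a\otimes x)=\spp_Q(a^*)x$, using for instance that $u\spp_Q(v^*)=v$ for partial units $v\le u$ --- and (b) that a local section of $\pi_2$ whose $(\pi_2)_!$-image is a local section of $\pi:X\to X/G$ is a local section of the composite $\pi\circ\pi_2$. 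Both are provable, so your proof can be completed, but as written this is the step on which your entire computation of $\langle\act,\pi_2\rangle_!$ rests and it is missing. Two smaller points: the identity ``$1_Q(uz)=1_Qz$ for $u\in Q_\ipi$'' is false in general (take $\spp_X(z)$ disjoint from $\spp_Q(u^*)$); it holds in the instance you use because $z=u^*s\wedge t$ satisfies $z=\spp_Q(u^*)z$, and you should say so. Finally, the ``standard fact'' that a map over $B$ between $B$-sheaves is a local homeomorphism is indeed available and is the same use of \cite{elephant}*{Lemma C1.3.2(iii)} that the paper makes in Lemma~\ref{funct2, lemma: covering}, so that part of your reduction is fine.
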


\begin{proof}
For each $u\in Q_\ipi$ and $t\in\bisections_X$ define a mapping
\[
\varphi:Q\times X\to X\otimes_{I(X)} X
\]
by
\[
\varphi(u, t) = ut\otimes t\;.
\]
This obviously preserves joins of partial units in the left variable, and in order to see that it also preserves joins of local bisections in the right variable let $t:=\V_{i\in I} t_i$ be such a join. Then, taking into account that for all $i,j\in I$ we have $t_j\tspp(t_i)=t\tspp(t_j)\tspp(t_i)=t\tspp(t_i)\tspp(t_j)=t_i\tspp(t_j)$ and
\[
t_i\otimes t_j=t_i\tspp(t_i)\otimes t_j= t_i\otimes t_j\tspp(t_i)=t_i\otimes t_i\tspp(t_j)\le t_i\otimes t_i\;,
\]
we obtain
\[
\varphi(u,\V_i t_i) = \V_{ij} ut_i\otimes t_j=\V_i ut_i\otimes t_i = \V_i\varphi(u,t_i)\;.
\]
Moreover, for all $b\in Q_0$ we have
\[
\varphi(ub,t)=ubt\otimes t=ubt\tspp(bt)\otimes t=ubt\otimes t\tspp(bt)=ubt\otimes bt=\varphi(u,bt)\;,
\]
and thus $\varphi$ factors uniquely through a sup-lattice homomorphism
\[
\varphi^\sharp: Q\otimes_{Q_0} X\to X\otimes_{I(X)} X
\]
satisfying $\varphi^\sharp(u\otimes t)=ut\otimes t$. 
For all $s,t\in \bisections_X$ we have
\begin{align*}
\varphi^\sharp\bigl([\act^*,\pi^*_2](s\otimes t)\bigr) &= \varphi^\sharp(\act^*(s)\wedge \pi^*_2(t))\\
&= \varphi^\sharp\bigl((\V_{u\in Q_\ipi} u\otimes u^*s) \wedge (1_Q\otimes t)\bigr)\\
&= \varphi^\sharp\bigl(\V_{u\in Q_\ipi} u\otimes (u^*s\wedge t)\bigr)\\
&= \varphi^\sharp\bigl(\V_{u\in Q_\ipi} u\otimes \spp_X(u^*s\wedge t)t\bigr)\quad\quad(t\in\sections_X)\\
&= \varphi^\sharp\bigl(\V_{u\in Q_\ipi} u\spp_X(u^*s\wedge t)\otimes t\bigr)\quad\quad\text{($\otimes$ is over $G_0$)}\\
&= \V_{u\in Q_\ipi} u\spp_X(u^*s\wedge t)t\otimes t= \V_{u\in Q_\ipi} u(u^*s\wedge t)\otimes t\\
&=\V_{u\in Q_\ipi} (s\wedge ut)\otimes t= (s\wedge 1_Qt)\otimes t\\
&= s\otimes (1_Qt\wedge t)\quad\quad\text{(now $\otimes$ is over $I(X)$)}\\
&= s\otimes t\;.
\end{align*}
Therefore $\varphi^\sharp\circ\langle\act,\pi_2\rangle^*=\ident$, so $\langle\act,\pi_2\rangle$ is an epimorphism in $\Loc$.
\end{proof}

The following result will not be needed, but most of it follows from the proof of the above theorem and it is interesting in its own right.

\begin{corollary}
Let $G$ be an \'etale groupoid and $X\equiv (X,p,\act)$ a $G$-bisheaf. Then the pairing map
$
\langle\act,\pi_2\rangle:G_1\otimes_{G_0} X\to X\otimes_{X/G} X
$
is semiopen, and its direct image $\langle\act,\pi_2\rangle_!$ satisfies, for all $u\in Q_\ipi$ and $t\in\bisections_X$,
\begin{equation}\label{cor:directimage}
\langle\act,\pi_2\rangle_!(u\otimes t) = ut\otimes t\;.
\end{equation}
\end{corollary}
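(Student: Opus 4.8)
The plan is to recognise that the direct image we are after is precisely the sup-lattice homomorphism $\varphi^\sharp\colon G_1\otimes_{G_0}X\to X\otimes_{X/G}X$ already built in the proof of Theorem~\ref{thm:transitivity}, which is characterised by $\varphi^\sharp(u\otimes t)=ut\otimes t$ for all $u\in Q_\ipi$ and $t\in\bisections_X$ (recall $X/G=I(X)$). Concretely, I would show that $\varphi^\sharp$ is left adjoint to the inverse image $\langle\act,\pi_2\rangle^*$; this proves at once that $\langle\act,\pi_2\rangle$ is semiopen with $\langle\act,\pi_2\rangle_!=\varphi^\sharp$, and formula~\eqref{cor:directimage} is then nothing but the defining property of $\varphi^\sharp$.

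Since $\langle\act,\pi_2\rangle^*$ is a frame homomorphism and $\varphi^\sharp$ is a sup-lattice homomorphism, both composites $\varphi^\sharp\circ\langle\act,\pi_2\rangle^*$ and $\langle\act,\pi_2\rangle^*\circ\varphi^\sharp$ preserve joins, so it suffices to check the counit and unit inequalities of the adjunction on join-dense generating sets. For the counit, $\varphi^\sharp(\langle\act,\pi_2\rangle^*(\xi))\le\xi$, I would simply invoke the main computation in the proof of Theorem~\ref{thm:transitivity}, which shows $\varphi^\sharp(\langle\act,\pi_2\rangle^*(s\otimes t))=s\otimes t$ for all $s,t\in\bisections_X$; since such tensors are join-dense in $X\otimes_{X/G}X$ by Lemma~\ref{lem:bisheafbasis}, the counit holds, indeed with equality.

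The only genuinely new point is the unit inequality $\eta\le\langle\act,\pi_2\rangle^*(\varphi^\sharp(\eta))$. Because $\V Q_\ipi=1$ and $\bisections_X$ is join-dense in $X$, the tensors $u\otimes t$ with $u\in Q_\ipi$ and $t\in\bisections_X$ are join-dense in $G_1\otimes_{G_0}X$, so it is enough to treat $\eta=u\otimes t$. Writing $\langle\act,\pi_2\rangle^*$ as the copairing $[\act^*,\pi_2^*]$ and using $\pi_2^*(t)=1_Q\otimes t$ together with $\act^*=\alpha_*$, one computes $\langle\act,\pi_2\rangle^*(\varphi^\sharp(u\otimes t))=\alpha_*(ut)\wedge(1_Q\otimes t)$; then $u\otimes t\le 1_Q\otimes t$ since $u\le 1_Q$, and $u\otimes t\le\alpha_*(ut)$ by~\eqref{eq:rightadjformula1} (as $u(t)=ut\le ut$), which gives the desired inequality. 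I do not expect a serious obstacle here, since the corollary is essentially a repackaging of Theorem~\ref{thm:transitivity}; the only points requiring care are the two reductions to generators --- both legitimate precisely because the relevant composites preserve joins --- and the correct bookkeeping with the copairing and with $\pi_2^*$.
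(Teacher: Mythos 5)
Your proposal is correct and follows essentially the paper's own proof: both identify the candidate direct image as the homomorphism $\varphi^\sharp$ built in the proof of Theorem~\ref{thm:transitivity}, take the counit of the adjunction from the splitting $\varphi^\sharp\circ\langle\act,\pi_2\rangle^*=\ident$ established there, and verify the unit on the join-dense generators $u\otimes t$ with $u\in Q_\ipi$ and $t\in\bisections_X$. The only (harmless) divergence is the unit step, where the paper computes $[\act^*,\pi_2^*](ut\otimes t)=\langle ut,t\rangle\otimes t$ via Theorem~\ref{innerproduct2} and then bounds it below by $u\otimes t$, whereas you obtain $u\otimes t\le\act^*(ut)\wedge\pi_2^*(t)$ directly from \eqref{eq:rightadjformula1} (since $ut\le ut$ and $u\le 1_Q$), which is equally valid and slightly more economical.
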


\begin{proof}
Let $\varphi^\sharp$ be defined as in the proof of Theorem~\ref{thm:transitivity}, where it has been seen that $\varphi^\sharp$ splits $\langle\act,\pi_2\rangle^*$. In order to prove that $\langle\act,\pi_2\rangle$ is semiopen we show that $\varphi^\sharp$ is left adjoint to $\langle\act,\pi_2\rangle^*$ by proving the adjunction unit (the counit is given by the splitting). Let $u\in Q_\ipi$ and $t\in \bisections_X$. The following derivation shows that $\langle\act,\pi_2\rangle^*\circ\varphi^\sharp\ge\ident$:
\begin{align*}
[\act^*,\pi^*_2]\bigl(\varphi^\sharp(u\otimes t)\bigr) &= [\act^*,\pi^*_2](ut\otimes t)= \act^*(ut)\wedge \pi^*_2(t)\\
&=\V_{v\in Q_\ipi} v\otimes (v^*ut\wedge t)=\V_{v\in Q_\ipi}  v\otimes \spp_X(v^*ut\wedge t)t\\
&=\V_{v\in Q_\ipi}  v\spp_X(v^*ut\wedge t)\otimes t\quad\quad\text{($\otimes$ is over $G_0$)}\\
&= \langle ut, t \rangle\otimes t\quad\quad \text{(by Theorem~\ref{innerproduct2})} \\
&= u\langle t, t \rangle\otimes t\\
&\ge u\spp_X(t)\otimes t\\
&=u\otimes \spp_X(t)t= u\otimes t\;.
\end{align*}
This shows that $\langle\act,\pi_2\rangle_!=\varphi^\sharp$ and proves \eqref{cor:directimage} because this formula is satisfied by $\varphi^\sharp$.
\end{proof}

\subsection{Principal sheaves}

Let $G$ be an \'etale groupoid. By a \emph{principal $G$-sheaf} will be meant a principal $G$-bundle (over $X/G$) which is also a $G$-sheaf. Such principal bundles are also known as \emph{covering bibundle functors} \cite{MeyerZhu}. Now we shall find a quantale-theoretic characterization of this notion. Naturally, we expect principal $G$-sheaves to correspond to principally covered $\opens(G)$-bisheaves, since, due to Theorem~\ref{thm:freeness} and Theorem~\ref{thm:transitivity}, the latter correspond to free and transitive $G$-actions. That this intuition is correct will be confirmed below in Theorem~\ref{thm:prQsh} after two lemmas.

\begin{lemma}\label{funct2, lemma: b_!=inner}
Let $G$ be an \'etale groupoid with quantale $Q:=\opens(G)$ and let $X$ be a principal $G$-sheaf with $Q$-valued inner product $\langle-,-\rangle$. Then the map $\theta:X\otimes_{X/G} X\to G_1$ is a local homeomorphism whose direct image is given for all $x,y\in X$ by
\[
\theta_!(x\otimes y)={\langle x,y \rangle}\;. 
\]
Furthermore, if the left projection $p:X\to G_0$ is a surjection then so is $\theta$.
\end{lemma}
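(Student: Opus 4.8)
The plan is to factor $\theta$ through the principality isomorphism. Recall that $\theta=\pi_1\circ\langle\theta,\pi_2\rangle$, where $\langle\theta,\pi_2\rangle\colon X\otimes_{X/G}X\to G_1\otimes_{G_0}X$ is the inverse of $\langle\act,\pi_2\rangle$ and $\pi_1$ is the first projection. Since $X$ is a $G$-sheaf its left projection $p\colon X\to G_0$ is a local homeomorphism, and $\pi_1$ is a pullback of $p$ along the range map $r$; as local homeomorphisms are stable under pullback, $\pi_1$ is a local homeomorphism, and hence so is $\theta$, being the composite of $\pi_1$ with an isomorphism.

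To compute $\theta_!$ I would use $\theta_!=(\pi_1)_!\circ\langle\theta,\pi_2\rangle_!$, noting that $\langle\theta,\pi_2\rangle_!=\langle\act,\pi_2\rangle^*=[\act^*,\pi_2^*]$ because $\langle\theta,\pi_2\rangle$ is an isomorphism with inverse $\langle\act,\pi_2\rangle$. For $s,t\in\sections_X$, the first steps of the computation in the proof of Theorem~\ref{thm:freeness} (using \eqref{eq:rightadjformula2}, that $t$ is a local section, and Theorem~\ref{innerproduct2}) give $[\act^*,\pi_2^*](s\otimes t)=\langle s,t\rangle\otimes t$. By Lemma~\ref{directimage}, $(\pi_1)_!$ sends $a\otimes x$ to the result of acting by $\spp_X(x)\in Q_0$ on $a$ in the $G_0$-module structure that $G_1=Q$ carries via $r$, namely right multiplication $a\mapsto a\spp_X(x)$; hence
\[
\theta_!(s\otimes t)=(\pi_1)_!\bigl(\langle s,t\rangle\otimes t\bigr)=\langle s,t\rangle\spp_X(t)=\langle s,\spp_X(t)t\rangle=\langle s,t\rangle\,,
\]
the last two equalities using that $\spp_X(t)$ is self-adjoint and that $\spp_X(t)t=t$. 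Since $\sections_X$ is join-dense in $X$, since $\theta_!$ and $\langle-,-\rangle$ preserve joins (the latter separately in each variable), and since $x\otimes y=\V_{i,j}s_i\otimes t_j$ whenever $x=\V_i s_i$ and $y=\V_j t_j$ with all $s_i,t_j\in\sections_X$, the identity $\theta_!(x\otimes y)=\langle x,y\rangle$ extends to all $x,y\in X$.

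Finally, if $p$ is also a surjection then it is an open surjection, being a surjective local homeomorphism, and since $r$ is open ($G$ being \'etale) Corollary~\ref{lem:surjstab} shows that $\pi_1$ is an open surjection; composing with the isomorphism $\langle\theta,\pi_2\rangle$ we conclude that $\theta$ is a surjection. The one place requiring care is the $(\pi_1)_!$ step, where one must correctly identify the $G_0$-module structure on $G_1$ induced by $r$ with right multiplication by elements of $Q_0$ --- dually to the familiar identity $ba=b1\wedge a$ attached to $d$ --- and check the small equality $\langle s,t\rangle\spp_X(t)=\langle s,t\rangle$; the remaining manipulations are routine.
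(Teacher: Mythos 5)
Your proposal is correct and follows essentially the same route as the paper: factor $\theta=\pi_1\circ\langle\theta,\pi_2\rangle$, get the local homeomorphism property from pullback stability of $p$, compute $\theta_!=(\pi_1)_!\circ[\act^*,\pi_2^*]$ via Lemma~\ref{directimage}, the formula for $\act^*$ and Theorem~\ref{innerproduct2}, and deduce surjectivity from Corollary~\ref{lem:surjstab}. The only cosmetic differences are that you verify the formula on local sections and extend by join-density (the paper's computation works directly for arbitrary $x,y$ since Theorem~\ref{innerproduct2} is stated for all elements) and that you invoke the defining pullback square over $r$ rather than \eqref{pi1pbalongd}; your identification of the $r$-induced $Q_0$-action on $G_1$ with right multiplication is also correct.
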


\begin{proof}
The projection $p$ is a local homeomorphism by hypothesis, and the map $\pi_1:G_1\otimes_{G_0} X\to G_1$ is a local homeomorphism because it is a pullback of $p$ along $d$ [\cf\ \eqref{pi1pbalongd}]. Therefore $\theta$ is a local homeomorphism because it is the composition of $\pi_1$ with the isomorphism $\langle\theta,\pi_2\rangle$:
\[
\xymatrix{
G_1\otimes_{G_0} X \ar@{<-}[rrr]^-{\langle \theta, \pi_2\rangle}_-{\cong}\ar[drr]_{\pi_1}  &&& X\otimes_{X/G} X\ar[ld]^-{\theta}   \\
&& G_1
}
\]
It also follows that $\theta$ is a surjection if $p$ is, for in this case $\pi_1:G_1\otimes_{G_0} X\to G_1$ is a surjection, due to Corollary~\ref{lem:surjstab}.
Next, since $[\act^*,\pi_2^*]=\langle \act,\pi_2\rangle^*=\langle\theta,\pi_2\rangle_!$, we obtain the following commutative diagram of sup-lattice homomorphisms:
\[
\xymatrix{
Q\otimes_{Q_0} X \ar[drr]_{(\pi_1)_!}  &&& X\otimes_{I(X)} X\ar[ld]^-{\theta_!} \ar[lll]_-{[\act^*,\pi^*_2]}^-\cong   \\
&& Q
}
\]
Taking into account that $(\pi_1)_!(q\otimes x)=q\spp_X(x)$ [see Lemma~\ref{directimage} and \eqref{directimagepi2}], and also the formula \eqref{eq:rightadjformula1} for $\act^*$, for all $x,y\in X$ we have 
\begin{align*}
\theta_!(x\otimes y) &= \bigl(\pi_1)_!([\act^*,\pi^*_2](x\otimes y)\bigr)= (\pi_1)_!\bigl(\V_{u\in Q_\ipi} u\otimes (u^*x \wedge y)\bigr)\\
&= \V_{u\in Q_\ipi} (\pi_1)_!\bigl(u\otimes (u^*x \wedge y)\bigr)= \V_{u\in Q_\ipi} u\spp_X(u^*x \wedge y)\\
&=\langle x,y\rangle\;,
\end{align*}
where the last step is an application of Theorem~\ref{innerproduct2}. 
\end{proof}

Given a bisheaf $X$, let us again write $\tsections_X$ for the set of local sections of the right projection.

\begin{lemma}\label{lem:pisections}
Let $G$ be an \'etale groupoid with quantale $Q:=\opens(G)$ and $X$ a principal $G$-sheaf with $Q$-valued inner product $\langle-,-\rangle$. For all $s\in\tsections_X$ we have $\langle s,s\rangle\in Q_0$.
\end{lemma}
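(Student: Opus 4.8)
The plan is to rewrite $\langle s,s\rangle$ as the direct image of a diagonal and then use the ``diagonal lands in the units'' identity for principal bundles. By Lemma~\ref{funct2, lemma: b_!=inner} the map $\theta\colon X\otimes_{X/G}X\to G_1$ is a local homeomorphism with $\theta_!(x\otimes y)=\langle x,y\rangle$, so in particular $\langle s,s\rangle=\theta_!(s\otimes s)$. On the other hand, Lemma~\ref{lem:localicreasoningwithglobalpoints}\eqref{lem:localicreasoningwithglobalpoints4} applied with $\boldsymbol x=\ident_X$ gives $\theta\circ\Delta=u\circ p$, where $\Delta\colon X\to X\otimes_{X/G}X$ is the diagonal. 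Since $X$ is a $G$-sheaf, $p$ is a local homeomorphism, and since $X$ is principal, $\pi\colon X\to X/G$ is a local homeomorphism (Lemma~\ref{funct2, lemma: covering}), so $\Delta$ is an open map; as $\theta$ is also open (being a local homeomorphism), taking direct images (functorial on open maps) yields $\theta_!\circ\Delta_!=u_!\circ p_!$, and every value of $u_!\circ p_!$ lies below $u_!(1_{G_0})=e$.

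Thus everything reduces to the identity $\Delta_!(s)=s\otimes s$ for $s\in\tsections_X$: granting it, $\langle s,s\rangle=\theta_!(\Delta_!(s))=u_!(p_!(s))\le e$, so $\langle s,s\rangle\in Q_0$. The inequality $\Delta_!(s)\le s\otimes s$ is immediate from $\Delta_!\dashv\Delta^*$ together with $\Delta^*(s\otimes s)=s\wedge s=s$. For the reverse inequality I would work in $X\otimes_{X/G}X=X\otimes_{I(X)}X$ and use that the right $I(X)$-action on $X$ is binary meet and that $s$, being a local section of the right projection, satisfies $s\wedge\tspp(x)=x$ for every $x\le s$ (here $\tspp(x)=1_Qx\in I(X)$): this lets one slide the invariant elements across the tensor to obtain $y\otimes z=s\otimes(y\wedge z)=(y\wedge z)\otimes s$ whenever $y,z\le s$. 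Consequently, if $\xi=\V_i y_i\otimes z_i$ satisfies $s\le\Delta^*(\xi)=\V_i(y_i\wedge z_i)$, then replacing each $y_i\otimes z_i$ by the smaller $(y_i\wedge s)\otimes(z_i\wedge s)$ gives $\xi\ge s\otimes\V_i(y_i\wedge z_i\wedge s)=s\otimes s$; since $\Delta_!(s)$ is the least $\xi$ with $s\le\Delta^*(\xi)$, this yields $s\otimes s\le\Delta_!(s)$.

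I expect the identification $\Delta_!(s)=s\otimes s$ to be the one delicate point: it is precisely where the hypothesis $s\in\tsections_X$ is used (it would fail for an arbitrary element of $X$, and $s$ need not lie in $\sections_X$), and it requires keeping careful track of the left and right $I(X)$-module structures and of the presentation of the tensor product over $I(X)$. The remaining steps are formal --- functoriality of the direct image on open maps and local homeomorphisms, and the fact that $e=u_!(1_{G_0})$ is the top element of $Q_0$.
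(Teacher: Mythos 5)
Your proof is correct, and its skeleton is the same as the paper's: both arguments reduce the claim to the identity $\Delta_!(s)=s\otimes s$ for $s\in\tsections_X$, combine it with the principality identity $\theta\circ\Delta=u\circ p$ (Lemma~\ref{lem:localicreasoningwithglobalpoints}\eqref{lem:localicreasoningwithglobalpoints4}) and with $\theta_!(x\otimes y)=\langle x,y\rangle$ from Lemma~\ref{funct2, lemma: b_!=inner}, and conclude $\langle s,s\rangle=u_!(p_!(s))\le e$. The one place where you diverge is exactly the ``delicate point'' you flagged: the paper obtains $\Delta_!(s)=s\otimes s$ instantly as the special case $f=g=\ident_X$ of Theorem~\ref{directimagepairing} (the diagonal is the pairing of two sheaf homomorphisms of the $I(X)$-sheaf $X$, using that $X$ is a bisheaf by Lemma~\ref{funct2, lemma: covering}), whereas you reprove it by hand: $\Delta_!(s)\le s\otimes s$ from the adjunction, and the reverse inequality by writing any $\xi$ with $s\le\Delta^*(\xi)$ as a join of pure tensors, cutting down to $(y_i\wedge s)\otimes(z_i\wedge s)$, and sliding $\tspp(y)\in I(X)$ across the tensor using the local-section identity $s\wedge\tspp(x)=x$ for $x\le s$. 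That computation is sound (the key equality $\tspp(y)\wedge z=y\wedge z$ for $y,z\le s$ does hold, precisely because $s\in\tsections_X$), so your argument is a correct, self-contained substitute for the citation of Theorem~\ref{directimagepairing}; its only cost is redoing, in the special case of the diagonal, work the paper has already packaged in that theorem, and its benefit is that the lemma then depends only on the adjunction formalism and the bisheaf structure rather than on the general pairing result. Minor remark: the openness of $\Delta$ and $\theta$ is not actually needed for $\theta_!\circ\Delta_!=u_!\circ p_!$, since left adjoints of composites compose regardless.
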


\begin{proof}
Let us write $\Delta$ for the following pairing map in the category of $I(X)$-locales:
\[
\xymatrix{
&&X\ar[dll]_{\ident}\ar[drr]^{\ident}\ar@{.>}[d]|-{\Delta=\langle\ident,\ident\rangle}\\
X&&X\otimes_{I(X)} X\ar[ll]^{\pi_1}\ar[rr]_{\pi_2}&&X
}
\]
By Lemma~\ref{directimagepairing} the local sections $s\in\tsections_X$ satisfy
\[
\Delta_!(s)=s\otimes s\;.
\]
Due to principality the following diagram commutes [\cf\ Lemma~\ref{lem:localicreasoningwithglobalpoints}\eqref{lem:localicreasoningwithglobalpoints4}]
\[
\xymatrix{
X\otimes_{I(X)} X\ar[rr]^-\theta&&G_1\\
X\ar[u]^\Delta\ar[rr]_p&&G_0\ar[u]_u
}
\]
and thus for all $s\in\tsections_X$ we have, due to Lemma~\ref{funct2, lemma: b_!=inner},
\[
\langle s,s\rangle = \theta_!(s\otimes s) =\theta_!(\Delta_!(s))=u_!(p_!(s))\;.
\]
Hence, $\langle s,s\rangle\in Q_0$.
\end{proof}

Now we arrive at the promised module-theoretic characterization of principal sheaves:

\begin{theorem}\label{thm:prQsh}
Let $G$ be an \'etale groupoid and $X$ a $G$-sheaf. The following conditions are equivalent:
\begin{enumerate}
\item\label{thm:prQsh1} $X$ is a principally covered $\opens(G)$-sheaf and it is a bisheaf;
\item\label{thm:prQsh2} $X$ is a principal $G$-sheaf.
\end{enumerate}
Furthermore, if these equivalent conditions hold, the set of principal sections $\psections_X$ coincides with the set of local bisections $\bisections_X$.
\end{theorem}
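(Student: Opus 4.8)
The plan is to recognize the two clauses of \eqref{thm:prQsh1} --- principal coveredness and the bisheaf property --- as the two defining ingredients of a principal bundle, using the results established above. First I would record the identification underlying everything: $X/G=I(X)$, and the coequalizer map $\pi:X\to X/G$ is precisely the anchor map of $X$ viewed as a right $I(X)$-locale, whose inverse image $\pi^*$ is the inclusion $I(X)\hookrightarrow X$; consequently $\pi$ is automatically an open surjection, and ``$X$ is a $G$-bisheaf'' is exactly the statement that $\pi$ is a local homeomorphism. Now assume \eqref{thm:prQsh1}. Theorem~\ref{thm:freeness} makes the pairing map $\langle\act,\pi_2\rangle:G_1\otimes_{G_0}X\to X\otimes_{X/G}X$ a regular monomorphism in $\Loc$, and Theorem~\ref{thm:transitivity}, whose hypothesis holds because $X$ is a bisheaf, makes it an epimorphism in $\Loc$. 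A morphism of $\Loc$ that is both a regular monomorphism and an epimorphism is an isomorphism: dually, its inverse image is a regular epimorphism of frames, hence the coequalizer of some pair $(g,h)$, and being monic it forces $g=h$, so, being the coequalizer of $(g,g)$, it is an isomorphism. Hence the defining square is a pullback and, since $\pi$ is an open surjection, $X$ is a principal $G$-bundle over $X/G$; being a $G$-sheaf by hypothesis it is a principal $G$-sheaf, which is \eqref{thm:prQsh2}.

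Conversely, if $X$ is a principal $G$-sheaf then Lemma~\ref{funct2, lemma: covering} gives that $\pi$ is a local homeomorphism, so $X$ is a bisheaf by the identification above. By Lemma~\ref{lem:bisheafbasis} the set $\bisections_X$ of local bisections is a Hilbert basis, hence $\V\bisections_X=1_X$; and by Lemma~\ref{lem:pisections} every element of $\tsections_X$ --- in particular every element of $\bisections_X=\sections_X\cap\tsections_X$ --- is a principal section. Therefore $\bisections_X\subseteq\psections_X$, so $\V\psections_X=1_X$ and $X$ is principally covered, establishing \eqref{thm:prQsh1}. The inclusion $\bisections_X\subseteq\psections_X$ just obtained is also one half of the final assertion.

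It remains to prove the reverse inclusion $\psections_X\subseteq\bisections_X$, which is the only step requiring a genuine computation. Given a principal section $s$, one must show $s\in\tsections_X$, that is, that $s$ is a local section of $\pi$; by the right-module form of the local-section criterion \eqref{localsection} --- under which $s\cdot\tspp(x)=s\wedge 1_Qx$ --- this amounts to checking $s\wedge 1_Qx\le x$ for all $x\le s$, the opposite inequality being automatic. Writing $1_Q=\V Q_\ipi$ and using frame distributivity in $X$, it suffices to bound $w:=s\wedge ux$ by $x$ for each $u\in Q_\ipi$. Since $w\le s$ and $x\le s$ with $s\in\sections_X$, one has $w=\spp_X(w)s$ and $x=\spp_X(x)s$, and then left-linearity of the inner product together with the principality identity $\langle s,s\rangle=\spp_X(s)\in Q_0$ (Lemma~\ref{principalsections}) yields $\spp_X(w)=\langle w,s\rangle\le\langle ux,s\rangle=u\langle x,s\rangle=u\spp_X(x)$. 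Finally $\spp_X(w)\in Q_0$ lies below the partial unit $u\spp_X(x)$, hence below $(u\spp_X(x))^*(u\spp_X(x))\le\spp_X(x)$ (an element of $Q_0$ below a partial unit $p$ is always below $p^*p$), so $w=\spp_X(w)s\le\spp_X(x)s=x$. I expect this last step --- converting the $Q_\ipi$-valuedness of $\langle s,s\rangle$ into the support inequality $\spp_X(w)\le\spp_X(x)$ --- to be the main obstacle; everything else is routine bookkeeping with the cited results.
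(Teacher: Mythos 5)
Your proof is correct. For the equivalence \eqref{thm:prQsh1}$\Leftrightarrow$\eqref{thm:prQsh2} and for the inclusion $\bisections_X\subseteq\psections_X$ it follows the paper's own route exactly: Theorem~\ref{thm:freeness} plus Theorem~\ref{thm:transitivity} (you additionally spell out the standard fact, left implicit in the paper, that a regular monomorphism which is an epimorphism in $\Loc$ is an isomorphism, and that $\pi:X\to X/G$ is an open surjection), and Lemma~\ref{funct2, lemma: covering} together with Lemma~\ref{lem:pisections} for the converse. Where you genuinely diverge is the remaining inclusion $\psections_X\subseteq\bisections_X$, the only real computation in the proof. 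The paper shows the Hilbert-section condition $s\wedge 1_Q(s\wedge x)\le x$ for all $x\in X$, bounding $s\wedge u(s\wedge x)$ by $t\wedge ux'$ with $t=s\wedge us$ and $x'=\spp_X(s)x$, and then invoking the characterization \eqref{free3pre}--\eqref{free3} of Lemma~\ref{principalsections} applied to the element $a=\spp_X(t)u\spp_X(s)\vee\spp_X(s)$, which satisfies $as=s$. You instead reduce, via the local-section criterion \eqref{localsection} for the $I(X)$-sheaf structure (legitimate here, since under the standing hypotheses $X$ is an $I(X)$-sheaf with $\tspp(x)=1_Qx$), to elements $x\le s$, and use the characterization \eqref{free1}--\eqref{free2}: with $w:=s\wedge ux$ you get $\spp_X(w)=\langle w,s\rangle\le u\langle x,s\rangle=u\spp_X(x)$, and since $\spp_X(w)\in Q_0$ it satisfies $\spp_X(w)=\spp_X(w)^*\spp_X(w)\le(u\spp_X(x))^*(u\spp_X(x))\le\spp_X(x)$, whence $w=\spp_X(w)s\le\spp_X(x)s=x$. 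The two reductions are equivalent (apply your criterion to $s\wedge x$ to recover the paper's statement), and your version trades the paper's ad hoc element $a$ for a short support estimate squarely within the Hilbert-module formalism, which is arguably cleaner; the only small points worth making explicit are that $\langle w,s\rangle=\spp_X(w)\langle s,s\rangle=\spp_X(w)$ uses $\langle s,s\rangle=\spp_X(s)$ together with $\spp_X(w)\le\spp_X(s)$ and the fact that multiplication on $Q_0$ is meet, and that $\V\bisections_X=1_X$ comes from the join-density established in (the proof of) Lemma~\ref{lem:bisheafbasis}.
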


\begin{proof}
Let us prove the implication \eqref{thm:prQsh1}$\Rightarrow$\eqref{thm:prQsh2}. Assume that $X$ is a principally covered bisheaf, and let $\act:G_1\otimes_{G_0} X\to X$ be the action. By Theorem~\ref{thm:freeness} the pairing map
$\langle\act,\pi_2\rangle:G_1\otimes_{G_0} X\to X\otimes_{X/G} X$
is a regular monomorphism, so by Theorem~\ref{thm:transitivity} it is an isomorphism.

Now we prove the implication \eqref{thm:prQsh2}$\Rightarrow$\eqref{thm:prQsh1}. Assume that $X$ is a principal $G$-sheaf. Then it is a bisheaf, by Lemma~\ref{funct2, lemma: covering}. Also, Lemma~\ref{lem:pisections} implies that $\bisections_X\subset\psections_X$, and therefore $X$ is principally covered.

In order to conclude the proof we need to prove the inclusion $\psections_X\subset\bisections_X$. We shall do this by showing that $\psections_X$ is a Hilbert basis for the inner product $[-,-]:X\times X\to I(X)$. Let $x\in X$. Since $X$ is principally covered we have $\V_{s\in\psections_X} s=1_X$, and thus
\[
\V_{s\in\psections_X} s[s,x] = \V_{s\in\psections_X} s\tspp(s\wedge x)
=\V_{s\in\psections_X} s\wedge 1_Q(s\wedge x)\ge \V_{s\in\psections_X} s\wedge x=x\;.
\]
So we have $\V_{s\in\psections_X} s[s,x]\ge x$. In order to prove that this is in fact an equality (and thus $\psections_X$ is a Hilbert basis for the $I(X)$-valued inner product), let us show that $s[s,x]\le x$ or, equivalently, $s\wedge 1_Q(s\wedge x)\le x$ for all $s\in \psections_X$. For this it suffices to prove that
\begin{equation}\label{sufficesequality}
s\wedge u(s\wedge x)\le x
\end{equation}
for all $s\in\psections_X$ and $u\in Q_\ipi$. First let us notice that
\[
s\wedge u(s\wedge x)=s\wedge u(s\wedge \spp_X(s)x)\le s\wedge us\wedge u\spp_X(s)x = t\wedge ux'
\]
where $t:=s\wedge us$ and $x':=\spp_X(s)x$. Since $s$ and $us$ are local sections and both $t\le s$ and $t\le us$, the section $t$ equals both $\spp_X(t)s$ and $\spp_X(t)us$, so
$\spp_X(t)us\le s$, and thus
\[
\bigl(\overbrace{\spp_X(t)u\spp_X(s)\vee\spp_X(s)}^{a}\bigr) s = \spp_X(t)us\vee s=s\;.
\]
Notice that $\spp_X(a^*)=\spp_X(s)$. Therefore $a\in Q_0$ because $s$ is principal (\cf\ Lemma~\ref{principalsections}), and it follows that
\[
t\wedge ux'=t\wedge\spp_X(t)u\spp_X(s)x\le t\wedge ax\le ax\le x\;.
\]
This proves \eqref{sufficesequality}, so we conclude that every principal section is a local bisection.
\end{proof}

From here on if $Q$ is an inverse quantal frame we shall refer to a principally covered $Q$-sheaf that is also a $Q$-bisheaf as a \emph{principal $Q$-sheaf}. 
We conclude this section by proving additional properties of principal sheaves. In particular the following one will be needed later on:

\begin{theorem}\label{funct2, theo: principalsheaves2}
Let $Q$ be an inverse quantal frame and $X$ be a principal $Q$-sheaf. For all $s,t\in\bisections_X$ such that $ \tspp(s)=  \tspp(t)$ there is one and only one $u\in Q_\ipi$ such that $s=ut$ with $\spp_Q(u^*)=\spp_X(t)$, and moreover $u=\langle s,t\rangle$.
\end{theorem}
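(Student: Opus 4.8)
The plan is to exhibit $u=\langle s,t\rangle$ as the unique element with the stated properties. By Theorem~\ref{thm:prQsh} a principal $Q$-sheaf is principally covered and satisfies $\psections_X=\bisections_X$, so $s$ and $t$ are principal sections and hence, by Lemma~\ref{pairsofprincipal}\eqref{pairsofprincipal1}, $u=\langle s,t\rangle\in Q_\ipi$. The real work is to check that $s=ut$ and that $\spp_Q(u^*)=\spp_X(t)$; once these are in place, both uniqueness and the identification $u=\langle s,t\rangle$ follow formally from Lemma~\ref{pairsofprincipal}\eqref{pairsofprincipal2}, since any $u'\in Q_\ipi$ with $s=u't$ and $\spp_Q(u'^*)=\spp_X(t)$ must satisfy $\langle s,t\rangle=u'$ because $t\in\psections_X$.

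To prove $s=ut$ I would expand $u$ by the formula of Theorem~\ref{innerproduct2}, $u=\V_{v\in Q_\ipi}v\spp_X(v^*s\wedge t)$, multiply on the right by $t$, and use that $t$ is a local section to rewrite $\spp_X(v^*s\wedge t)\,t=v^*s\wedge t$; then \eqref{partialunitaction} turns $v(v^*s\wedge t)$ into $vt\wedge s$, so that $ut=\bigl(\V_{v\in Q_\ipi}vt\bigr)\wedge s=(\V Q_\ipi)\,t\wedge s=1_Qt\wedge s$, using that the action preserves joins, frame distributivity in $X$, and $\V Q_\ipi=1_Q$. Here is exactly where the hypothesis $\tspp(s)=\tspp(t)$ enters: since $\tspp(x)=1_Qx$ and $s\le 1_Qs$, we get $1_Qt\wedge s=1_Qs\wedge s=s$, hence $s=ut$.

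The inequality $\spp_Q(u^*)=\spp_Q(\langle t,s\rangle)\le\spp_X(t)$ is immediate from \eqref{eq:sppXsppQinner}, so the remaining point — which I expect to be the main obstacle — is the reverse inequality $\spp_X(t)\le\spp_Q(u^*)$. Since $u\in Q_\ipi$ gives $\spp_Q(u^*)=u^*u$, it suffices to show $u^*ut=t$: then $\spp_X(t)=\spp_X(u^*ut)=u^*u\wedge\spp_X(t)\le u^*u$, using that a $Q$-sheaf is stably supported. The inclusion $u^*ut\le t$ holds because $u^*u\le e$; for the reverse I would compute $\tspp(u^*ut)$. Applying the involution to the identity $a1_Q=\spp_Q(a)1_Q$ yields $1_Qa=1_Q\spp_Q(a^*)$, whence $1_Qu=1_Qu^*u$ and therefore $\tspp(u^*ut)=1_Qu^*ut=1_Qut=\tspp(ut)=\tspp(s)=\tspp(t)=1_Qt$. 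Now the right $I(X)$-sheaf structure is brought in: as $t\in\tsections_X$ is a local section for that structure and $u^*ut\le t$, the right-hand analogue of \eqref{localsection} gives $u^*ut=\tspp(u^*ut)\wedge t=1_Qt\wedge t=t$, as required.

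The delicate feature of the argument is the way the two module structures on $X$ cooperate: the left $Q$-action by itself only delivers $u^*ut\le t$, and it is the right $I(X)$-sheaf structure of $t$, together with the orbit invariance $\tspp(ut)=\tspp(t)$ (which itself rests on $s=ut$ and $\tspp(s)=\tspp(t)$), that upgrades this to an equality. Everything else — checking $u\in Q_\ipi$, the computations with Theorem~\ref{innerproduct2} and \eqref{partialunitaction}, and the concluding appeal to Lemma~\ref{pairsofprincipal}\eqref{pairsofprincipal2} for uniqueness — is routine once these structural points are secured.
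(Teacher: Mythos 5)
Your proposal is correct, and it reaches the conclusion by a genuinely different route from the paper's. The paper first establishes the support identities $\spp_Q(\langle s,t\rangle)=\spp_X(s)$ and $\spp_Q(\langle s,t\rangle^*)=\spp_X(t)$ by exploiting the principality isomorphism $\langle\act,\pi_2\rangle\colon Q\otimes_{Q_0}X\to X\otimes_{I(X)}X$: it transports the Hilbert basis of $X\otimes_{I(X)}X$ provided by Theorem~\ref{XOX}, computes $[\act^*,\pi_2^*](s\otimes t)=\langle s,t\rangle\otimes t$, and chases a diagram of direct images relating $d_!\circ(\pi_1)_!$ and $p_!\circ(\pi_1)_!$; only afterwards does it derive $\langle t,s\rangle s=t$ (via Lemma~\ref{tut} and the right local-section property of $t$) and hence $s=ut$. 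You reverse the order and avoid the principality isomorphism and the direct-image diagram altogether: you obtain $s=ut$ by the direct computation $\langle s,t\rangle t=\V_{v\in Q_\ipi}v(v^*s\wedge t)=1_Qt\wedge s=s$, using Theorem~\ref{innerproduct2}, \eqref{partialunitaction}, frame distributivity, $\V Q_\ipi=1_Q$ and the hypothesis $\tspp(s)=\tspp(t)$, and you then extract $\spp_X(t)\le\spp_Q(u^*)$ from $u^*ut=t$, proved with the same key ingredient as the paper, namely the right $I(X)$-section property of $t$ applied to $u^*ut\le t$ together with $\tspp(u^*ut)=\tspp(ut)$; your identity $1_Qa=1_Q\spp_Q(a^*)$ is precisely the instance of Lemma~\ref{tut} that is needed (modulo the standard fact that elements below $e$ are self-adjoint, which you should mention when involuting $a^*1_Q=\spp_Q(a^*)1_Q$). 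Both arguments use principality of the bisections only to get $u\in Q_\ipi$ and, via Lemma~\ref{pairsofprincipal}\eqref{pairsofprincipal2}, uniqueness. What your route buys is a purely module-theoretic and more elementary argument that makes transparent exactly where each hypothesis enters; what the paper's route buys is the companion identity $\spp_Q(u)=\spp_X(s)$ obtained along the way and a formulation tied to the groupoid-level principality data --- though that identity also follows from your conclusions by Lemma~\ref{pairsofprincipal}\eqref{pairsofprincipal2}, so nothing is lost.
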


\begin{proof}
Notice that $X\otimes_{I(X)} X$ is an $I(X)$-sheaf for which, by Theorem~\ref{XOX}, the following set is a Hilbert basis:
\[
\sections=\{ s\otimes t\st\ s\in\bisections_X,\, t\in\bisections_X \}\;.
\]
Let $G$ be an \'etale groupoid such that $Q\cong\opens(G)$ and let $\act$ be the groupoid action corresponding to this $Q$-sheaf.
By principality we have an isomorphism
\[
\langle\act,\pi_2\rangle:Q\otimes_{Q_0} X\stackrel{\cong}\longrightarrow X\otimes_{I(X)} X\;,
\]
so $Q\otimes_{Q_0} X$ an $I(X)$-sheaf for which a Hilbert basis is given by $\sections' = [\act^*,\pi^*_2](\sections)$. Hence, we have
\begin{align*}
\sections' 
&= \{ [\act^*,\pi^*_2](s\otimes t) \st s,t\in\bisections_X\}\\
&=\{ \act^*(s)\wedge \pi^*_2(t) \st s,t\in\bisections_X\}\\
&=\bigl\{ \bigl(\V_{v\in Q_\ipi} v\otimes v^*s\bigr) \wedge (1_Q\otimes t) \st s,t\in\bisections_X\bigr\}\\
&=\bigl\{ \V_{v\in Q_\ipi} v\otimes (v^*s \wedge t) \st s,t\in\bisections_X\bigr\}\\
&=\bigl\{ \V_{v\in Q_\ipi} v\otimes \spp_X(v^*s \wedge t)t \st s,t\in\bisections_X\bigr\}& \text{($v^*s\wedge t$ is a subsection of $t$)}\\
&=\bigl\{ \V_{v\in Q_\ipi} v\spp_X(v^*s \wedge t)\otimes t \st s,t\in\bisections_X\bigr\}\\
&=\{ \langle s,t \rangle\otimes t \st s,t\in\bisections_X\}\;.& \text{(by Theorem~\ref{innerproduct2})}
\end{align*}
Recall that, since $X$ is a $G$-locale, the following diagram is commutative:
\[
\vcenter{\xymatrix{
 Q\otimes_{Q_0} X  \ar[d]_{\pi_1} \ar[r]^-{\act} & X \ar[d]^{p}  \\
 Q  \ar[r]_-{d} & Q_0 }
}
\]
Then the principality of $X$ gives us the commutativity of
\[
\vcenter{\xymatrix{
& X\otimes_{I(X)} X \ar[dr]^{\pi_1} \\
 Q\otimes_{Q_0} X \ar[ur]_{\cong}^{\langle \act, \pi_2 \rangle}  \ar[d]_{\pi_1} \ar[rr]^-{\act} && X \ar[d]^{p}  \\
 Q  \ar[rr]_-{d} && Q_0 }
}
\]
and thus we obtain a commutative diagram of direct image homomorphisms:
\[
\vcenter{\xymatrix{
& X\otimes_{I(X)} X \ar[dr]^{(\pi_1)_!} \\
 Q\otimes_{Q_0} X \ar[ur]_{\cong}^{\langle \act, \pi_2 \rangle_!}  \ar[d]_{(\pi_1)_!} \ar[rr]^-{\act_!} && X \ar[d]^{p_!}  \\
 G_1  \ar[rr]_-{d_!} && Q_0 }
}
\]
Furthermore, since $[a^*,\pi^*_2]$ is the inverse of $\langle \act, \pi_2 \rangle_!$, we conclude that
\[  d_!\circ (\pi_1)_!\circ [a^*,\pi^*_2] = p_!\circ (\pi_1)_!\;,   \]
so given $s,t\in\bisections_X$ such that $\tspp(s)=\tspp(t)$ we have $\spp_X(s)=\spp_Q(\langle s,t\rangle)$, as the following derivation shows:
\begin{align*}
\spp_X(s) &= \spp_X(s\tspp(s))\\
&= \spp_X(s\tspp(t))& \text{(because $\tspp(s)=\tspp(t)$)}\\
&= \spp_X((\pi_1)_!(s\otimes t))& \text{(by Lemma~\ref{directimage})}\\
&= p_!((\pi_1)_!(s\otimes t))\\
&=d_!\circ (\pi_1)_!\circ [a^*,\pi^*_2](s\otimes t)\\
&=d_!\circ (\pi_1)_!(\langle s,t\rangle\otimes t)\\
&=d_!(\langle s,t\rangle\spp_X(t))=d_!(\langle s,\spp_X(t)t\rangle)\\
&=\spp_Q(\langle s,t\rangle)\;.
\end{align*}  
Similarly we can prove $\spp_Q(\langle s,t\rangle^*)=\spp_X(t)$. Finally, due to Lemma~\ref{tut} we have $\tspp(\langle t,s\rangle s)=\tspp(s)=\tspp(t)$, and thus we obtain  
\[
\langle t,s\rangle s\le t\iff\langle t,s\rangle s = t\tspp(\langle t,s\rangle s)=t\tspp(t)=t\;,
\]
which implies that $ut=s$, as intended, where $u=\langle s,t \rangle\in Q_\ipi$. The uniqueness of $u$ is due to the principality of the local bisections, see Lemma~\ref{pairsofprincipal}. 
\end{proof}

\begin{corollary}\label{funct2, cor: principal-properties}
Let $Q$ be an inverse quantal frame and $X$ a principal $Q$-sheaf. The following property holds for all $s,s',t\in \bisections_X$ such that $\tspp(s)=\tspp(s')=\tspp(t)$:
\begin{equation}\label{eq: innermeet1}
{\langle s\wedge s',t \rangle}={\langle s,t \rangle}\wedge {\langle s',t \rangle}\;.
\end{equation}
\end{corollary}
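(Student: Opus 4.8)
The plan is to establish the two halves of the identity \eqref{eq: innermeet1} separately. The inequality $\langle s\wedge s',t\rangle\le\langle s,t\rangle\wedge\langle s',t\rangle$ is immediate, since the inner product of a $Q$-sheaf preserves joins in its first argument and is therefore monotone there, while $s\wedge s'\le s$ and $s\wedge s'\le s'$.

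For the reverse inequality I would exploit Theorem~\ref{funct2, theo: principalsheaves2}. Write $u:=\langle s,t\rangle$ and $u':=\langle s',t\rangle$; since $s,s',t\in\bisections_X$ and $\tspp(s)=\tspp(s')=\tspp(t)$, that theorem applies to both pairs $(s,t)$ and $(s',t)$ and yields $u,u'\in Q_\ipi$ with $s=ut$, $s'=u't$, and $\spp_Q(u^*)=\spp_X(t)=\spp_Q(u'^*)$, equivalently $u^*u=\spp_X(t)=u'^*u'$. Now put $w:=u\wedge u'$. Then $w\in Q_\ipi$ (partial units are downwards closed), and $w\le u$, $w\le u'$ give $wt\le ut=s$ and $wt\le u't=s'$, hence $wt\le s\wedge s'$, so by monotonicity of the inner product in the first variable $\langle wt,t\rangle\le\langle s\wedge s',t\rangle$. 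It then remains to identify $\langle wt,t\rangle$ with $w=\langle s,t\rangle\wedge\langle s',t\rangle$: using that $t\in\bisections_X=\psections_X$ (Theorem~\ref{thm:prQsh}) is a principal section we have $\langle t,t\rangle=\spp_X(t)$ (Lemma~\ref{principalsections}), so $\langle wt,t\rangle=w\langle t,t\rangle=w\spp_X(t)$, and since $w^*w\le u^*u=\spp_X(t)$ and $ww^*w=w$ (inverse quantal frames satisfy $a\le aa^*a$, while $ww^*\le e$ forces $ww^*w\le w$) one gets $w=ww^*w\le w\spp_X(t)\le we=w$, i.e.\ $\langle wt,t\rangle=w$. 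Combining the two inequalities gives \eqref{eq: innermeet1}.

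The one point that needs a little care is precisely this last step: one cannot apply Theorem~\ref{funct2, theo: principalsheaves2} directly to $s\wedge s'$, because there is no reason for $\tspp(s\wedge s')$ to equal $\tspp(t)$, and correspondingly $w=u\wedge u'$ only satisfies $\spp_Q(w^*)\le\spp_X(t)$ rather than equality; the relation $ww^*w=w$ is what lets us absorb $\spp_X(t)$ back into $w$. Beyond this bit of support bookkeeping everything is routine monotonicity of the action, the multiplication, and the inner product, so I do not anticipate a real obstacle.
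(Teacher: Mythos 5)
Your proof is correct, and it closes the hard half of the identity by a route different from the paper's. Both arguments begin the same way: the inequality $\langle s\wedge s',t\rangle\le\langle s,t\rangle\wedge\langle s',t\rangle$ is immediate from monotonicity, and Theorem~\ref{funct2, theo: principalsheaves2} is used to write $u=\langle s,t\rangle$, $v=\langle s',t\rangle$ with $s=ut$, $s'=vt$, $\spp_Q(u^*)=\spp_Q(v^*)=\spp_X(t)$. The difference lies in how one shows $w:=u\wedge v\le\langle s\wedge s',t\rangle$. The paper invokes the explicit join formula for the inner product of two local sections recalled as \eqref{innerproduct} in the proof of Theorem~\ref{innerproduct2} (from~\cite{GSQS}), namely $\langle s\wedge s',t\rangle=\V\{b\in Q_\ipi\st\spp_Q(b)\le\spp_X(s\wedge s'),\ \spp_Q(b^*)\le\spp_X(t),\ bt\le s\wedge s'\}$, and checks that $w$ satisfies the three membership conditions, including a short stability computation giving $\spp_Q(w)\le\spp_X(s\wedge s')$. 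You avoid that formula entirely: from $wt\le s\wedge s'$ and monotonicity you reduce to computing $\langle wt,t\rangle$, and then use left $Q$-linearity, $\langle t,t\rangle=\spp_X(t)$ (principality of $t$ via Theorem~\ref{thm:prQsh} and Lemma~\ref{principalsections}), and the absorption $w\spp_X(t)=w$, which you justify correctly from $w^*w\le u^*u=\spp_X(t)$ together with $ww^*w=w$ (using $a\le aa^*a$ in an inverse quantal frame and $ww^*\le e$). What your version buys is a more self-contained, Hilbert-module-style argument that needs neither the quoted formula from~\cite{GSQS} nor the condition on $\spp_Q(w)$; what the paper's version buys is uniformity with the formula-based computations of section~\ref{sec:shqu}. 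Your cautionary remark is also apt: Theorem~\ref{funct2, theo: principalsheaves2} cannot be applied to the pair $(s\wedge s',t)$ since $\tspp(s\wedge s')$ need not equal $\tspp(t)$, and indeed the paper does not attempt that either.
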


\begin{proof}
By Theorem~\ref{funct2, theo: principalsheaves2}, we can write $u={\langle s,t \rangle}\in Q_\ipi$ (where $\spp_Q(u)=\spp_X(s)$, $\spp_Q(u^*)=\spp_X(t)$ and $ut=s$) and $v={\langle s',t \rangle}\in Q_\ipi$ (where $\spp_Q(u)=\spp_X(s')$, $\spp_Q(u^*)=\spp_X(t)$ and $vt=s'$). Our goal is to show that $u\wedge v={\langle s\wedge s',t\rangle} $. Clearly, ${\langle s\wedge s',t\rangle}\leq u\wedge v$. To show the converse, let us consider:
\begin{equation}\label{innermeet}
{\langle s\wedge s',t\rangle} = \V\{ b\in Q_\ipi\st \spp_Q(b)\leq \spp_X(s\wedge s'),\, \spp_Q(b^*)\leq \spp_X(t),\, bt\leq s\wedge s'     \}\;,
\end{equation}
and let us prove that $u\wedge v$ satisfies the conditions of \eqref{innermeet}, as follows:
\begin{itemize}
\item $\spp_Q((u\wedge v)^*)\leq \spp_X(t)$ --- because $\spp_Q((u\wedge v)^*)\leq \spp_Q(u^*)=\spp_X(t)$.
\item $(u\wedge v)t\leq s\wedge s'$ --- because $(u\wedge v)t\leq ut\wedge vt=s\wedge s'$. 
\item $\spp_Q(u\wedge v)\leq \spp_X(s\wedge s')$ --- Indeed, by applying the stability of $\spp_Q$ we have
\begin{align*}
\spp_Q(u\wedge v)&=(u\wedge v)(u\wedge v)^*\\
&= \spp_Q((u\wedge v)(u\wedge v)^*)\\
&= \spp_Q((u\wedge v)\spp_Q((u\wedge v)^*))\\
&\leq \spp_Q((u\wedge v)\spp_X(t))\\
&= \spp_X((u\wedge v)t)\\
&\leq \spp_X(s\wedge s')\;.
\end{align*}
\end{itemize}
Hence, $u\wedge v \leq \langle s\wedge s',t\rangle$ and the equality holds. 
\end{proof}

\subsection{Principal bibundles revisited}

Let $Q$ and $R$ be inverse quantal frames. By a \emph{$Q$-$R$-bisheaf} will be meant a $Q$-$R$-bimodule $X$ that is both a $Q$-sheaf and a (right) $R$-sheaf.
Equivalently, a $Q$-$R$-bisheaf is a $Q$-$R$-bimodule $X$ that satisfies the following conditions (\cf\ Lemma~\ref{lem:bisheafbasis}):
\begin{description}
\item[Inner products:] $X$ is equipped with two inner products
\[
\langle-,-\rangle: X\times X\to Q\quad\quad\text{and}\quad\quad [-,-]:X\times X\to R
\]
(with $\langle-,-\rangle$ being left $Q$-linear and $[-,-]$ being right $R$-linear);
\item[Bisections:] There is $\sections\subset X$ such that for all $x\in X$
\[
\V_{s\in\sections} \langle x,s\rangle s = \V_{s\in\sections} s[s,x]=x\;.
\]
\end{description}
The largest such set $\sections$ will be denoted by $\bisections_X$, and its elements are called \emph{local bisections} of $X$ (\cf\ section~\ref{sec:transitivity}).
We shall usually denote the corresponding support operators by
\[
\spp_X :X\to Q_0\quad\quad\text{and}\quad\quad\tspp :X\to R_0\;,
\]
so for all $x\in X$ we have
\[
\spp_X(x) = \langle x,x\rangle\wedge e_Q\quad\quad\text{and}\quad\quad\tspp(x)=[x,x]\wedge e_R\;.
\]

Analogously, for \'etale groupoids $G$ and $H$, a $G$-$H$-bisheaf is a $G$-$H$-bilocale $X$ whose left and right projections $G_0\stackrel p\longleftarrow X\stackrel q\longrightarrow H_0$ are local homeomorphisms. Clearly, $X$ is a $G$-$H$-bisheaf if and only if it is an $\opens(G)$-$\opens(H)$-bisheaf, in which case we have, writing $u$ for the unit inclusion maps of both $G$ and $H$,
\[
\spp_X=u_!\circ p_!\quad\quad\text{and}\quad\quad \tspp=u_!\circ q_!\;.
\]
In addition, by a \emph{principal $G$-$H$-bisheaf} will be meant a principal $G$-$H$-bibundle which is also a $G$-sheaf (necessarily a $G$-$H$-bisheaf, by Lemma~\ref{funct2, lemma: covering}).

\begin{theorem}\label{GHbisheaves}
Let $G$ and $H$ be \'etale groupoids with quantales $Q:=\opens(G)$ and $R:=\opens(H)$, and let $X$ be a $G$-$H$-bisheaf (equivalently, a $Q$-$R$-bisheaf). Then $X$ is a principal $G$-$H$-bisheaf if and only if the following conditions hold:
\begin{enumerate}
\item\label{GHbisheaves1} $\langle s,s\rangle\le e_Q$ for all $s\in\bisections_X$;
\item\label{GHbisheaves2} $[1_X,1_X]\ge e_R$;
\item\label{GHbisheaves3} $1_X\tspp(s)\le 1_Q s$ for all $s\in\bisections_X$.
\end{enumerate}
Moreover, the left projection $p:X\to G_0$ is a surjection if and only if
\begin{enumerate}
\setcounter{enumi}{3}
\item\label{GHbisheaves4} $\langle 1_X,1_X\rangle\ge e_Q$,
\end{enumerate}
and in this case $\langle-,-\rangle:X\times X\to Q$ is surjective.
\end{theorem}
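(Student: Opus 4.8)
The plan is to reduce everything to Theorem~\ref{thm:prQsh} together with Lemma~\ref{McongX/G}. A principal $G$-$H$-bisheaf is by definition a principal $G$-bundle over $H_0$ that is a $G$-sheaf, and by Lemma~\ref{McongX/G} (and its proof via coequalizers) any principal $G$-bundle over $H_0$ has $H_0\cong X/G$ in a way that identifies the bundle projection $q\colon X\to H_0$ with the orbit map $X\to X/G$; conversely, a principal $G$-bundle over $X/G$ which is a $G$-sheaf is a principal $G$-$H$-bibundle as soon as the canonical frame map $\iota\colon R_0\to I(X)=X/G$, $c\mapsto 1_Xc$ (the corestriction of $q^*$, landing in $I(X)$ because $q$ is $G$-invariant), is an isomorphism — for then $X\otimes_{H_0}X=X\otimes_{X/G}X$, $q$ is an open surjection, and the pairing map lands isomorphically in $X\otimes_{H_0}X$. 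So the theorem comes down to expressing, via the three conditions, the two facts: (i)~$X$ is a principal $G$-sheaf, and (ii)~$\iota$ is a frame isomorphism. As a preliminary one checks that a $Q$-$R$-bisheaf is automatically a $Q$-bisheaf in the sense of section~\ref{sec:transitivity}: condition \eqref{opennessIX} holds for every left $Q$-locale by \eqref{partialunitaction} (for $v,w\in Q_\ipi$ one has $vx\wedge wx'=v(x\wedge v^*wx')\le v(x\wedge 1_Qx')$), and every $s\in\tsections_X$ is also a local section of $X\to I(X)$ because for $x\le s$ we have $x\le 1_X\tspp(x)\in I(X)$, hence $1_Qx\wedge s\le 1_X\tspp(x)\wedge s=x$; so Theorem~\ref{thm:prQsh} applies to $X$.

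For (i): by Lemma~\ref{principalsections}, condition \eqref{GHbisheaves1} says exactly $\bisections_X\subseteq\psections_X$, and since $\bisections_X$ is a Hilbert basis this is equivalent to $\V\psections_X=1_X$, i.e.\ to $X$ being principally covered, which by Theorem~\ref{thm:prQsh} is equivalent to $X$ being a principal $G$-sheaf; moreover in that case $\psections_X=\bisections_X$. For (ii): $\iota$ is injective iff $q^*$ is, iff $q$ is a surjection, and $q$ surjective $\iff$ $\tspp(1_X)=e_R$ $\iff$ $[1_X,1_X]\wedge e_R=e_R$ $\iff$ \eqref{GHbisheaves2}. For surjectivity of $\iota$ note first the automatic inequality $1_Qs\le 1_X\tspp(s)$ (since $s=s\tspp(s)\le 1_X\tspp(s)\in I(X)$), so \eqref{GHbisheaves3} is equivalent to the equality $1_X\tspp(s)=1_Qs$ for all $s\in\bisections_X$; granting this equality, $1_Qs=\iota(\tspp(s))$ lies in the image of $\iota$, and since (by principal coverage) every $y\in I(X)$ equals $\V_{s\in\bisections_X}1_Q(y\wedge s)$ and $\bisections_X$ is downward closed, $\iota$ is onto. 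Hence, given \eqref{GHbisheaves1}, conditions \eqref{GHbisheaves2} and \eqref{GHbisheaves3} together are equivalent to $\iota$ being a frame isomorphism.

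Both directions of the main equivalence now assemble from (i) and (ii). The only nonformal point in the direction ``principal $\Rightarrow$ \eqref{GHbisheaves1}--\eqref{GHbisheaves3}'' is the inequality $1_X\tspp(s)\le 1_Qs$; I would derive it from Theorem~\ref{funct2, theo: principalsheaves2}: using $t\wedge 1_X\tspp(s)=t\tspp(s)$ write $1_X\tspp(s)=\V_{t\in\bisections_X}t\tspp(s)$, and for each $t$ put $t_0:=t\tspp(s)\in\bisections_X$ and $s_0:=s\tspp(t_0)$, so $\tspp(s_0)=\tspp(t_0)$; Theorem~\ref{funct2, theo: principalsheaves2} gives $u\in Q_\ipi$ with $t_0=us_0=us\tspp(t_0)\le us\le 1_Qs$. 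For the final sentence of the statement: $p\colon X\to G_0$ is surjective iff $\spp_X(1_X)=e_Q$, iff $\langle 1_X,1_X\rangle\wedge e_Q=e_Q$ by \eqref{supportformulamod}, iff \eqref{GHbisheaves4}; and when $p$ is surjective, Lemma~\ref{funct2, lemma: b_!=inner} shows $\theta$ is a surjection with $\theta_!(x\otimes y)=\langle x,y\rangle$, so $\langle 1_X,1_X\rangle=\theta_!(1_X\otimes 1_X)=\theta_!(1)=1_Q$, whence $a=a\langle 1_X,1_X\rangle=\langle a1_X,1_X\rangle$ for every $a\in Q$, i.e.\ $\langle-,-\rangle$ is surjective.

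The main obstacle is the analysis of condition \eqref{GHbisheaves3}: recognizing that, modulo the automatic inequality $1_Qs\le 1_X\tspp(s)$, it says exactly that $\iota$ is surjective, and then — in the forward direction — producing the reverse inequality $1_X\tspp(s)\le 1_Qs$ from the quantale-level transitivity packaged in Theorem~\ref{funct2, theo: principalsheaves2}. The remaining ingredients (injectivity of $\iota$, the preliminary bisheaf remark, and the fullness statement) are bookkeeping on top of the earlier theorems.
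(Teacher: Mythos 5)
Your overall route is essentially the paper's: you use \eqref{GHbisheaves2} and \eqref{GHbisheaves3} to show that $c\mapsto 1_Xc$ is an isomorphism $R_0\cong I(X)$ identifying $q$ with the orbit projection, \eqref{GHbisheaves1} to get principal coverage, and Theorem~\ref{thm:prQsh} together with Lemma~\ref{McongX/G} to assemble the equivalence, with \eqref{GHbisheaves4} handled exactly as in the paper via $\spp_X(1_X)=\langle 1_X,1_X\rangle\wedge e_Q$. Your explicit preliminary that a $Q$-$R$-bisheaf is automatically an $I(X)$-bisheaf (openness of the $I(X)$-action via \eqref{partialunitaction}, and $R_0$-sections being $I(X)$-sections) is a worthwhile addition that the paper leaves implicit in its ordering of the argument.

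Two points need repair. First, in the direction ``principal $\Rightarrow$~\eqref{GHbisheaves3}'' you apply Theorem~\ref{funct2, theo: principalsheaves2} after checking $\tspp(s_0)=\tspp(t_0)$ for the $R_0$-valued support, but in that theorem $\tspp$ is the $I(X)$-valued support $x\mapsto 1_Qx$; the $R_0$-valued equality does not by itself yield $1_Qs_0=1_Qt_0$, and the comparison between the two supports is precisely the inequality $1_X\tspp(x)\le 1_Qx$ you are trying to prove, so read on its own this step is circular. It is rescued by the identification you invoke at the outset: by Lemma~\ref{McongX/G} the projection $q$ is identified with the orbit projection, hence $q^*\circ q_!=\pi^*\circ\pi_!$, i.e.\ $1_X\tspp(x)=1_Qx$ for \emph{all} $x\in X$ --- but then \eqref{GHbisheaves3} is immediate (with equality) and the detour through Theorem~\ref{funct2, theo: principalsheaves2} is unnecessary; this is what the paper means by ``straightforward reversal''. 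Second, your argument for the final fullness assertion is wrong as written: from $\langle 1_X,1_X\rangle=1_Q$ you cannot conclude $a=a\langle 1_X,1_X\rangle$, since $a1_Q\ge a$ is in general strict (e.g.\ $a=e_Q$). The correct argument is the paper's: by Lemma~\ref{funct2, lemma: b_!=inner}, $\theta$ is an open surjection with $\theta_!(x\otimes y)=\langle x,y\rangle$, so Frobenius reciprocity gives $\theta_!\circ\theta^*=\ident_Q$ and hence $\theta_!$ is onto, which is the sense in which $\langle-,-\rangle$ is surjective here.
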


\begin{proof}
Since $p$ is an open map, being a surjection is equivalent to the condition $p_!(1_X)=1_{G_0}$, which is equivalent to \eqref{GHbisheaves4} because
$u_!$ is injective and $u_!(1_{G_0})=e_Q$, and $\langle 1_X,1_X\rangle\wedge e_Q=\spp_X(1_X)=u_!(p_!(1_X))$. In this case the surjectivity of $\langle-,-\rangle$ follows from Lemma~\ref{funct2, lemma: b_!=inner}. Now let us assume that \eqref{GHbisheaves1}--\eqref{GHbisheaves3} hold. Then \eqref{GHbisheaves2} implies that $q:X\to H_0$ is a surjection, so $q_!\circ q^*=\ident_{H_0}$. Moreover, $q^*(c)$ equals $1_X u_!(c)$ (where now $u:H_0\to H_1$), so $q^*$ is valued in $I(X)$: $1_Q q^*(c) = 1_Q (1_X u_!(c)) = (1_Q 1_X) u_!(c)=1_X u_!(c)=q^*(c)$. And \eqref{GHbisheaves3} implies $1_X\tspp(1_Qx)\le 1_Q x$ for all $x\in X$, which is equivalent to $q^*(q_!(x))\le x$ for all $x\in I(X)$, and thus to $q^*\circ q_!=\ident_{I(X)}$ because $q^*$ is right adjoint to $q_!$. Hence, \eqref{GHbisheaves2} and \eqref{GHbisheaves3} imply that $1_X(-):R_0\to I(X)$ is an order isomorphism and that the following diagram commutes:
\[
\xymatrix{
&&X\\
R_0\ar[urr]|{1_X(-)}\ar[rrrr]_{1_X(-)}^\cong&&&& I(X)\ar[ull]|{\text{inclusion}}
}
\]
Writing $\pi:X\to I(X)$ for the map of locales defined by the inclusion $\pi^*:I(X)\to X$, we obtain a commutative diagram in $\Loc$:
\[
\xymatrix{
&&X\ar[dll]_q\ar[drr]^\pi\\
H_0\ar[rrrr]^\cong&&&& I(X)
}
\]
Since moreover $X$ is principally covered due to \eqref{GHbisheaves1} it follows that $X$ is a principal $G$-$H$-bisheaf. Showing that being a principal $G$-$H$-bisheaf implies \eqref{GHbisheaves1}--\eqref{GHbisheaves3} is, using Lemma~\ref{McongX/G}, a straightforward reversal of the previous arguments.
\end{proof}

\begin{remark}\label{rem:GHbishesves}
Notice that due to \eqref{eq:sppXsppQinner} some of the conditions in the statement of the above theorem can be replaced as follows:
\begin{itemize}
\item Condition~\eqref{GHbisheaves2} is equivalent to $\V_{s\in\bisections_X}[s,s]\ge e_R$;
\item Condition~\eqref{GHbisheaves4} is equivalent to $\V_{s\in\bisections_X}\langle s,s\rangle\ge e_Q$;
\item In the case that $p:X\to G_0$ is a surjection, \eqref{GHbisheaves1} and \eqref{GHbisheaves4} can be replaced by the single condition $\V_{s\in\bisections_X} \langle s,s\rangle = e_Q$.
\end{itemize}
\end{remark}

Let $Q$ and $R$ be inverse quantal frames. A \emph{Hilsum--Skandalis sheaf} from $R$ to $Q$ is the isomorphism class of a principal $Q$-$R$-bisheaf. The \emph{category $\HSQ$} is that whose objects are the inverse quantal frames and whose morphisms are the Hilsum--Skandalis sheaves, with composition defined in terms of tensor products, as follows: if $Q$, $R$, and $S$ are inverse quantal frames and $\psi:S\to R$ and $\varphi:R\to Q$ are Hilsum--Skandalis sheaves represented by a principal $R$-$S$-bisheaf $Y$ and a principal $Q$-$R$-bisheaf $X$, respectively, then the composition $\varphi\circ\psi$ is represented by the tensor product $X\otimes_R Y$, which is a principal $Q$-$S$-bisheaf.

Defining $\HSGsh$ to be the subcategory of $\HSG$ obtained by restricting to Hilsum--Skandalis maps represented by principal bibundles that are also sheaves, we immediately obtain:

\begin{theorem}
The bi-equivalence of \cite{Re15}*{Corollary 4.12} yields an equivalence between the categories $\HSGsh$ and $\HSQ$.
\end{theorem}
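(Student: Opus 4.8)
The plan is to read the statement off the bi-equivalence $\mathcal{F}\colon\bigrpd\to\biIQLoc$ of \cite{Re15}*{Corollary 4.12} by checking that it restricts appropriately. Recall that $\mathcal{F}$ sends an \'etale groupoid $G$ to $\opens(G)$ (hence is essentially surjective on objects, since every inverse quantal frame is of this form), that on each hom-category it is the isomorphism $G$-$H$-$\Loc\cong\opens(G)$-$\opens(H)$-$\Loc$ recalled in section~\ref{sec:prelim}, and that it carries the tensor product of bilocales to the tensor product of bimodules and the unit $1$-cell $G_1$ to $\opens(G)$ viewed as a bimodule over itself. Both $\HSGsh$ and $\HSQ$ arise from a sub-bicategory by passing to isomorphism classes of $1$-cells: $\HSGsh$ from the sub-bicategory of $\bigrpd$ having the \'etale groupoids as objects and the principal $G$-$H$-bibundles that are sheaves as $1$-cells, and $\HSQ$ from the sub-bicategory of $\biIQLoc$ having the inverse quantal frames as objects and the principal $Q$-$R$-bisheaves as $1$-cells. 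So I would first show that $\mathcal{F}$ restricts to a bi-equivalence between these two sub-bicategories, and then invoke the general fact that the construction sending a bicategory to the category with the same objects and with isomorphism classes of $1$-cells as morphisms turns bi-equivalences into equivalences of categories.

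The crux is the matching of principal bisheaves under $\mathcal{F}$. Put $Q=\opens(G)$ and $R=\opens(H)$. A $G$-$H$-bilocale $X$ is a $G$-$H$-bisheaf exactly when both of its projections are local homeomorphisms, which is the same as requiring the induced $Q_0$- and $R_0$-module structures on $X$ to be sheaves; hence the isomorphism $G$-$H$-$\Loc\cong Q$-$R$-$\Loc$ restricts to an identification of $G$-$H$-bisheaves with $Q$-$R$-bisheaves. Within bisheaves, Theorem~\ref{GHbisheaves} characterizes principality of a $G$-$H$-bisheaf by the conditions \eqref{GHbisheaves1}--\eqref{GHbisheaves3}, which are phrased entirely in terms of the two inner products, the top element $1_X$, and the support operators of $X$ --- all data that $\mathcal{F}$ leaves intact. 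So $X$ is a principal $G$-$H$-bisheaf if and only if $\mathcal{F}(X)$ is a principal $Q$-$R$-bisheaf, and $\mathcal{F}$ restricts to an isomorphism between the full subcategory of $G$-$H$-$\Loc$ on the principal $G$-$H$-bisheaves and the full subcategory of $Q$-$R$-$\Loc$ on the principal $Q$-$R$-bisheaves; in particular it induces a bijection on isomorphism classes of such objects, that is, on the hom-sets of $\HSGsh$ and $\HSQ$. (If one prefers to avoid a direct appeal to Theorem~\ref{GHbisheaves}, the same conclusion follows from Theorem~\ref{thm:prQsh} together with Theorem~\ref{thm:freeness}, Theorem~\ref{thm:transitivity} and Lemma~\ref{funct2, lemma: covering}, which already recast freeness and transitivity of the $G$-action in module terms.)

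The remaining verifications are formal. The identity $1$-cell $G_1$ is itself a principal $G$-$G$-bisheaf --- its structure maps $d$ and $r$ are local homeomorphisms --- and $\mathcal{F}$ sends it to the identity $1$-cell $\opens(G)$; the tensor product $X\otimes_H Y$ of a principal $G$-$H$-bisheaf with a principal $H$-$K$-bisheaf is again a principal bisheaf, as recalled just before the statement of the theorem, and $\mathcal{F}(X\otimes_H Y)\cong\mathcal{F}(X)\otimes_R\mathcal{F}(Y)$ because the isomorphisms of hom-categories are compatible with tensor products (section~\ref{sec:prelim}, following \cite{Re15}). Hence the two sub-bicategories are closed under the bicategorical operations and contain the identities, $\mathcal{F}$ restricts to a bi-equivalence between them, and truncating to isomorphism classes of $1$-cells yields $\HSGsh\simeq\HSQ$.

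I expect the only genuinely non-routine point to be the bisheaf-matching of the second paragraph, and even that reduces to combining Theorem~\ref{GHbisheaves} with the identification of groupoid sheaves and quantale sheaves; everything else is bookkeeping about how a bi-equivalence descends to the associated $1$-categories. One subtlety worth making explicit in a full writeup is that $\HSGsh$ really is a subcategory of $\HSG$ --- i.e.\ that its classes of $1$-cells contain the identity bibundles and are closed under tensor product --- which is exactly what the formal verifications above supply.
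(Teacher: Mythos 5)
Your proposal is correct and follows essentially the same route as the paper, which states the result as an immediate consequence of the bi-equivalence of \cite{Re15} together with the identification of $G$-$H$-bisheaves with $\opens(G)$-$\opens(H)$-bisheaves and the quantale-theoretic characterization of principality (Theorem~\ref{GHbisheaves}); you have simply made explicit the bookkeeping about restricting to the sub-bicategories and truncating to isomorphism classes of $1$-cells, which is exactly what the paper leaves implicit.
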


\begin{remark}
Let $Q$ and $R$ be inverse quantal frames. By Lemma~\ref{lem:globalsection}, a Hilsum--Skandalis sheaf $R\to Q$ is the isomorphism class of $\langle\varphi\rangle$ for a groupoid functor $\varphi:H\to G$, where $Q\cong\opens(G)$ and $R\cong\opens(H)$, if and only if there is $s\in\tsections_X$ such that $\tspp(s)=e_R$.
\end{remark}

\subsection{Biprincipal bisheaves}

Let $Q$ and $R$ be inverse quantal frames. A principal $Q$-$R$-bisheaf $X$ will be said to be \emph{biprincipal} if it is also principal as an $R$-sheaf; equivalently, if $X^*$ is a principal $R$-$Q$-bisheaf.
From Theorem~\ref{GHbisheaves} we immediately obtain (\cf\ Remark~\ref{rem:GHbishesves}):

\begin{corollary}\label{cor:GHbisheaves}
Let $Q$ and $R$ be inverse quantal frames and $X$ a $Q$-$R$-bisheaf. Then $X$ is a biprincipal $Q$-$R$-bisheaf if and only if the following conditions hold:
\begin{enumerate}
\item\label{cor:GHbisheaves1} $\V_{s\in\bisections_X} \langle s,s\rangle = e_Q$;
\item\label{cor:GHbisheaves2} $\V_{s\in\bisections_X}[s,s]= e_R$;
\item\label{cor:GHbisheaves3} $1_X\tspp(s)\le 1_Q s$ for all $s\in\bisections_X$;
\item\label{cor:GHbisheaves4} $\spp_X(s)1_X\le s1_R$ for all $s\in\bisections_X$.
\end{enumerate}
Furthermore, if these conditions hold then $\langle-,-\rangle$ and $[-,-]$ are surjective maps: $\langle X,X\rangle=Q$ and $[X,X]=R$.
\end{corollary}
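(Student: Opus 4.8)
The plan is to read the corollary off Theorem~\ref{GHbisheaves} by applying it twice. By definition $X$ is biprincipal exactly when $X$ is a principal $Q$-$R$-bisheaf and, at the same time, the dual $X^*$ is a principal $R$-$Q$-bisheaf; so I would first invoke Theorem~\ref{GHbisheaves} for $X$, then invoke it again for $X^*$, and finally merge the two resulting lists of conditions using Remark~\ref{rem:GHbishesves}.

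Applied to $X$, Theorem~\ref{GHbisheaves} says that $X$ is a principal $Q$-$R$-bisheaf iff $\langle s,s\rangle\le e_Q$ for all $s\in\bisections_X$, $[1_X,1_X]\ge e_R$, and $1_X\tspp(s)\le 1_Q s$ for all $s\in\bisections_X$ [\eqref{GHbisheaves1}--\eqref{GHbisheaves3}], and that in that case the left projection $p$ is a surjection iff $\langle 1_X,1_X\rangle\ge e_Q$ [\eqref{GHbisheaves4}], in which event $\langle-,-\rangle$ is onto; the third of these is verbatim condition~\eqref{cor:GHbisheaves3}. Next I would record the structure carried by $X^*$: it is the same locale, its set of local bisections is again $\bisections_X$, its left $R$-valued inner product is $[-,-]$ and its right $Q$-valued inner product is $\langle-,-\rangle$, its $R_0$-valued support is $\tspp$ and its $Q_0$-valued support is $\spp_X$, while (the involution fixing the elements at issue, since members of $Q_0$ and top elements are self-adjoint) the left $R$-action of $1_R$ on $X^*$ carries $x$ to $x1_R$ and the right $Q$-action of $\spp_X(s)\in Q_0$ carries $1_X$ to $\spp_X(s)1_X$. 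Feeding this into Theorem~\ref{GHbisheaves} yields: $X^*$ is a principal $R$-$Q$-bisheaf iff $[s,s]\le e_R$ for all $s\in\bisections_X$, $\langle 1_X,1_X\rangle\ge e_Q$, and $\spp_X(s)1_X\le s1_R$ for all $s\in\bisections_X$ --- the last being verbatim condition~\eqref{cor:GHbisheaves4} --- and the extra clause now reads: the right projection $q$ of $X$ is a surjection iff $[1_X,1_X]\ge e_R$, in which event $[-,-]$ is onto.

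It then remains to merge. Conditions~\eqref{cor:GHbisheaves3} and~\eqref{cor:GHbisheaves4} have just been matched with conditions coming from $X$ and from $X^*$ respectively. For the other two I would use Remark~\ref{rem:GHbishesves}: the conjunction of ``$\langle s,s\rangle\le e_Q$ for all $s$'' (from $X$) with ``$\langle 1_X,1_X\rangle\ge e_Q$'' (from $X^*$) is equivalent to $\V_{s\in\bisections_X}\langle s,s\rangle=e_Q$, \ie\ \eqref{cor:GHbisheaves1}; and symmetrically the conjunction of ``$[1_X,1_X]\ge e_R$'' (from $X$) with ``$[s,s]\le e_R$ for all $s$'' (from $X^*$) is equivalent to $\V_{s\in\bisections_X}[s,s]=e_R$, \ie\ \eqref{cor:GHbisheaves2}. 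This gives the stated equivalence, and the final assertion is then immediate: if \eqref{cor:GHbisheaves1}--\eqref{cor:GHbisheaves4} hold then in particular $\langle 1_X,1_X\rangle\ge e_Q$ and $[1_X,1_X]\ge e_R$, so the surjectivity clauses of Theorem~\ref{GHbisheaves}, for $X$ and for $X^*$, yield $\langle X,X\rangle=Q$ and $[X,X]=R$. The one place where care is needed --- and the only genuine obstacle I foresee --- is the middle step, namely getting the transported structure on $X^*$ right, so that condition~\eqref{GHbisheaves3} of Theorem~\ref{GHbisheaves} applied to $X^*$ really does come out as~\eqref{cor:GHbisheaves4}; once that bookkeeping is verified the corollary is indeed immediate.
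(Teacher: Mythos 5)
Your proposal is correct and is exactly the argument the paper intends: the corollary is stated as an immediate consequence of Theorem~\ref{GHbisheaves} (applied to $X$ and, via the definitional equivalence of biprincipality with $X^*$ being a principal $R$-$Q$-bisheaf, to $X^*$), merged using Remark~\ref{rem:GHbishesves}. Your explicit bookkeeping of the transported structure on $X^*$ (same bisections, supports and inner products, with self-adjointness of $1_R$ and of elements of $Q_0$ turning condition~\eqref{GHbisheaves3} for $X^*$ into condition~\eqref{cor:GHbisheaves4}) is precisely the verification the paper leaves implicit.
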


In the above description of biprincipal bisheaves conditions \eqref{cor:GHbisheaves3} and \eqref{cor:GHbisheaves4} can be replaced by the (much easier to remember) \emph{interchange rule} $\langle x,y\rangle z=x[y,z]$ that is common in definitions of equivalence (or imprimitivity) bimodules for C*-algebras and inverse semigroups:

\begin{theorem}\label{funct2, lemma: biprincipal-properties}
Let $Q$ and $R$ be inverse quantal frames and $X$ a $Q$-$R$-bisheaf satisfying
\[
\V_{s\in\bisections_X} \langle s,s\rangle = e_Q\quad\quad\text{and}\quad\quad
\V_{s\in\bisections_X}[s,s]= e_R\;.
\]
Then $X$ is a biprincipal $Q$-$R$-bisheaf if and only if for all $s,t,u\in\bisections_X$ we have
\begin{equation}\label{bipr}
\langle s,t\rangle u= s[t,u]\;.
\end{equation}
\end{theorem}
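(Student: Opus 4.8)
The plan is to reduce everything to Corollary~\ref{cor:GHbisheaves}. The two displayed hypotheses of the theorem are exactly conditions~\eqref{cor:GHbisheaves1} and~\eqref{cor:GHbisheaves2} of that corollary, so $X$ is biprincipal if and only if it moreover satisfies~\eqref{cor:GHbisheaves3}, namely $1_X\tspp(s)\le 1_Qs$, and~\eqref{cor:GHbisheaves4}, namely $\spp_X(s)1_X\le s1_R$, for all $s\in\bisections_X$. Thus it suffices to show that, under the standing hypotheses, the conjunction of~\eqref{cor:GHbisheaves3}--\eqref{cor:GHbisheaves4} is equivalent to the interchange rule~\eqref{bipr}.

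Before either implication I would record a few ``truncation'' identities. First, for any $t\in\sections_X$ and any $s\in X$,
\[
\langle s,t\rangle t=s\wedge 1_Qt\;;
\]
the inequality $\le$ is clear since $t$ is a Hilbert section and $\langle s,t\rangle\le 1_Q$, and for $\ge$ one expands $\langle s,t\rangle$ by Theorem~\ref{innerproduct2}, uses that $t$ is a local section together with the distributivity of partial units over meets \eqref{partialunitaction} and the easy inequality $v1_X\le vv^*1_X$ for $v\in Q_\ipi$, obtaining $\langle s,t\rangle t=\V_{v\in Q_\ipi}(vv^*s\wedge vt)=\V_{v\in Q_\ipi}(s\wedge vt)=s\wedge 1_Qt$. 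Dually, $t[t,u]=u\wedge t1_R$ for $t\in\tsections_X$ and $u\in X$. Second, applying the anchor conditions to $t=\spp_X(t)t=t\tspp(t)$ gives $t\le 1_X\tspp(t)$ and $t\le\spp_X(t)1_X$, hence (using $1_Q1_X=1_X=1_X1_R$) also $1_Qt\le 1_X\tspp(t)$ and $t1_R\le\spp_X(t)1_X$ for all $t\in\bisections_X$; consequently condition~\eqref{cor:GHbisheaves3} is equivalent to $1_X\tspp(t)=1_Qt$ for all $t\in\bisections_X$, and~\eqref{cor:GHbisheaves4} to $\spp_X(t)1_X=t1_R$. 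Finally, for $t,u\in\bisections_X$ one has $[t,u]=\tspp(t)[t,u]$, because $\spp_R([t,u])=[t,u][u,t]\wedge e_R\le[t,t]\wedge e_R=\tspp(t)$ by the Hilbert section inequality for the $R$-structure; and $\langle s,t\rangle=\langle s,t\rangle\spp_X(t)$ by right skew-linearity of $\langle-,-\rangle$ together with $\spp_X(t)t=t$.

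For the implication ``biprincipal $\Rightarrow$ \eqref{bipr}'', fix $s,t,u\in\bisections_X$. Using the right anchor condition, then the equality $1_X\tspp(t)=1_Qt$ coming from~\eqref{cor:GHbisheaves3}, then the first truncation identity, then bimodule associativity $(ax)r=a(xr)$,
\[
s[t,u]=s\bigl(\tspp(t)[t,u]\bigr)=(s\tspp(t))[t,u]=(s\wedge 1_X\tspp(t))[t,u]=(s\wedge 1_Qt)[t,u]=(\langle s,t\rangle t)[t,u]=\langle s,t\rangle(t[t,u])\;;
\]
and symmetrically, using the left anchor condition, the equality $\spp_X(t)1_X=t1_R$ from~\eqref{cor:GHbisheaves4}, and the dual truncation identity,
\[
\langle s,t\rangle u=(\langle s,t\rangle\spp_X(t))u=\langle s,t\rangle(\spp_X(t)u)=\langle s,t\rangle(\spp_X(t)1_X\wedge u)=\langle s,t\rangle(t1_R\wedge u)=\langle s,t\rangle(t[t,u])\;.
\]
Both sides equal $\langle s,t\rangle(t[t,u])$, so~\eqref{bipr} holds. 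Conversely, assume~\eqref{bipr}. For $s\in\bisections_X$, since $\bisections_X$ is join-dense in $X$ and $\langle-,-\rangle$ is join-continuous and left $Q$-linear, $1_X[s,s]=\V_{t\in\bisections_X}t[s,s]=\V_{t\in\bisections_X}\langle t,s\rangle s=\langle 1_X,s\rangle s\le 1_Qs$, and since $\tspp(s)\le[s,s]$ this gives~\eqref{cor:GHbisheaves3}; symmetrically $\spp_X(s)1_X\le\langle s,s\rangle 1_X=\V_{t\in\bisections_X}s[s,t]=s[s,1_X]\le s1_R$ gives~\eqref{cor:GHbisheaves4}. By Corollary~\ref{cor:GHbisheaves}, $X$ is biprincipal.

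The only genuinely technical point is the truncation identity $\langle s,t\rangle t=s\wedge 1_Qt$ and its dual, where Theorem~\ref{innerproduct2} and the partial-unit calculus enter; everything else is bookkeeping with the two anchor conditions, bimodule associativity, and Corollary~\ref{cor:GHbisheaves}.
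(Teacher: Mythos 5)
Your proof is correct, and its ``only if'' direction is a genuinely different route from the paper's, while the ``if'' direction coincides with it. The paper first normalizes supports using Theorem~\ref{axyxay} and then invokes Theorem~\ref{funct2, theo: principalsheaves2}, applied to $X$ and (by biprincipality) to its dual, to identify $\langle s,t\rangle$ and $[t,u]$ as the unique partial units $v\in Q_\ipi$, $w\in R_\ipi$ with $vt=s$ and $tw=u$, whence $\langle s,t\rangle u=vu=vtw=sw=s[t,u]$. You instead stay entirely inside the module calculus: the truncation identities $\langle s,t\rangle t=s\wedge 1_Qt$ and $t[t,u]=u\wedge t1_R$ (which do follow from Theorem~\ref{innerproduct2}; in fact \eqref{partialunitaction} gives $v(v^*s\wedge t)=vt\wedge s$ directly, so your detour through $vv^*s\wedge vt$ is unnecessary), combined with the equalities $1_X\tspp(t)=1_Qt$ and $\spp_X(t)1_X=t1_R$ that encode conditions \eqref{cor:GHbisheaves3}--\eqref{cor:GHbisheaves4} of Corollary~\ref{cor:GHbisheaves}, reduce both $\langle s,t\rangle u$ and $s[t,u]$ to the common value $\langle s,t\rangle\bigl(t[t,u]\bigr)$, using only the anchor conditions and bimodule associativity. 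This bypasses the machinery behind Theorem~\ref{funct2, theo: principalsheaves2} (whose proof goes back to the groupoid-level principality isomorphism and direct-image diagram chases), and it even yields the interchange rule for arbitrary $s,u\in X$ as long as $t\in\bisections_X$; what the paper's route buys is the conceptual reading of the two inner products as the unique ``translating'' partial units between bisections. Your converse is essentially the paper's argument: apply \eqref{bipr} with $1_X=\V_{t\in\bisections_X}t$ to obtain $1_X\tspp(s)\le 1_Qs$ and $\spp_X(s)1_X\le s1_R$, and conclude by Corollary~\ref{cor:GHbisheaves}; your explicit use of $\tspp(s)\le[s,s]$ and $\spp_X(s)\le\langle s,s\rangle$ makes the inequalities cleaner than the equalities written in the paper, which are not needed there either.
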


\begin{proof}
First let us assume that $X$ is biprincipal and prove that \eqref{bipr} holds.
Let $s,t,u\in \bisections_X$. Note that
\[
\langle s,t\rangle u=\langle s,\spp_X(t)t\rangle\spp_X(u)u
=\langle s,\spp_X(u)t\rangle\spp_X(t)u\;,
\]
so we may assume without loss of generality that $\spp_X(t)=\spp_X(u)$ in the expression $\langle s,t\rangle u$. In addition, using Theorem~\ref{axyxay} we obtain
\[
\langle s,t\rangle u=\langle s\tspp(s),t\tspp(t)\rangle u = \langle s\tspp(t),t\tspp(s)\rangle u\;,
\]
so again without loss of generality we shall assume that in the expression $\langle s,t\rangle u$ we have $\tspp(s)=\tspp(t)$. We note that in the expression $s[t,u]$ exactly the same assumptions can be made, for similar reasons: $\spp_X(t)=\spp_X(u)$ and $\tspp(s)=\tspp(t)$.
Then, since $\langle s,t\rangle\in Q_\ipi$,
by Theorem~\ref{funct2, theo: principalsheaves2} we conclude that $\langle s,t\rangle$ is the unique $v\in Q_\ipi$ such that $\spp_Q(v^*)=\spp_X(t)$ and $vt=s$. Similarly, $[ t,u ]$ is the unique $w\in R_\ipi$ such that $\spp_R(w)=\tspp(t)$ and $tw=u$. Therefore
\[ \langle s,t\rangle u=vu=vtw=sw=s[t,u]\;,\]
so \eqref{bipr} holds.
Now let us assume that \eqref{bipr} holds and prove that $X$ is a biprincipal $Q$-$R$-bisheaf. This means that we must prove the conditions of Corollary~\ref{cor:GHbisheaves}\eqref{cor:GHbisheaves3} and Corollary~\ref{GHbisheaves}\eqref{GHbisheaves4}. Let $s\in\bisections_X$. Then
\[
1_Q s\ge \langle 1_X,s\rangle s=1_X[s,s]=1_X\tspp(s)\;,
\]
so Corollary~\ref{cor:GHbisheaves}\eqref{cor:GHbisheaves3} holds. Similarly,
\[
s1_R\ge s[s,1_X]=\langle s,s\rangle 1_X=\spp_X(s) 1_X\;,
\]
so Corollary~\ref{cor:GHbisheaves}\eqref{cor:GHbisheaves4} holds.
\end{proof}

Using the above result another useful description of biprincipal bisheaves is obtained:

\begin{theorem}\label{funct2, theo: ME}
Let $Q$ and $R$ be inverse quantal frames and $X$ a $Q$-$R$-bisheaf. The following conditions are equivalent:
\begin{enumerate}
\item\label{ME2} $X$ is a biprincipal $Q$-$R$-bisheaf;
\item\label{ME3} $X$ is a principal $Q$-$R$-bisheaf with inverse given by $X^*$ in $\HSQ$.
\end{enumerate}
\end{theorem}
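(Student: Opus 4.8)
The plan is to prove the two implications separately; \eqref{ME3}$\Rightarrow$\eqref{ME2} is essentially definitional while \eqref{ME2}$\Rightarrow$\eqref{ME3} carries the content. For \eqref{ME3}$\Rightarrow$\eqref{ME2}, recall that by construction the morphisms of $\HSQ$ are isomorphism classes of principal bisheaves; hence the very assertion that $X^*$ is the inverse of $X$ in $\HSQ$ presupposes that $X^*$ is a principal $R$-$Q$-bisheaf, and together with the standing hypothesis that $X$ is a principal $Q$-$R$-bisheaf this is verbatim the definition of $X$ being biprincipal. For the converse, suppose $X$ is biprincipal. Then $X^*$ is a principal $R$-$Q$-bisheaf, so it represents a Hilsum--Skandalis sheaf $Q\to R$, and what remains is to produce isomorphisms $X\otimes_R X^*\cong Q$ and $X^*\otimes_Q X\cong R$ of principal bisheaves, identifying the two composites with the identity morphisms $\ident_Q$ and $\ident_R$ of $\HSQ$ (the identity on $Q$ being represented by $Q$ itself, with multiplication for its two actions and $Q_\ipi$ for its local bisections). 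It suffices to construct the first isomorphism: applying it with $X^*$ in place of $X$ yields the second, which is legitimate because biprincipality is self-dual and $X^{**}=X$ (\cf\ Lemma~\ref{commutativityoftensor}).

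The isomorphism $\beta\colon X\otimes_R X^*\to Q$ will be the one induced by the $Q$-valued inner product of $X$, namely $\beta(x\otimes y)=\langle x,y\rangle$. Since $\langle-,-\rangle$ preserves joins in each variable it factors through the sup-lattice tensor $X\otimes X^*$; and since by Theorem~\ref{axyxay} (applied with $r$ replaced by $r^*$) one has $\langle xr,y\rangle=\langle x,yr^*\rangle$ for all $r\in R$, while the left $R$-action on the dual is $r\cdot y=yr^*$, the homomorphism descends to the balanced tensor $X\otimes_R X^*$. Left $Q$-linearity of the inner product and the skew identity $\langle x,ay\rangle=\langle x,y\rangle a^*$, combined with the right $Q$-action $y\cdot a=a^*y$ on $X^*$, show that $\beta$ is a homomorphism of $Q$-$Q$-bimodules; and it is surjective because biprincipality forces $\langle X,X\rangle=Q$ (Corollary~\ref{cor:GHbisheaves}). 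Note also that $X\otimes_R X^*$ is a principal $Q$-$Q$-bisheaf by the definition of composition in $\HSQ$.

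For the inverse I propose the plainly join-preserving, left $Q$-linear map $\beta^{\sharp}\colon Q\to X\otimes_R X^*$ given by $\beta^{\sharp}(q)=\bigvee_{t\in\bisections_X}qt\otimes t$. The relation $\beta\circ\beta^{\sharp}=\ident$ is the easy half: a biprincipal bisheaf is in particular a principal $Q$-sheaf, so $\bisections_X=\psections_X$ (Theorem~\ref{thm:prQsh}), whence $\langle t,t\rangle=\spp_X(t)$ for $t\in\bisections_X$ and $\bigvee_{t\in\bisections_X}\spp_X(t)=\bigvee_{t\in\bisections_X}\langle t,t\rangle=e_Q$ by Corollary~\ref{cor:GHbisheaves}, giving $\beta(\beta^{\sharp}(q))=\bigvee_t q\langle t,t\rangle=qe_Q=q$. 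The relation $\beta^{\sharp}\circ\beta=\ident$ is the crux. One first observes that $\{x\otimes t\mid x\in X,\ t\in\bisections_X\}$ is join-dense in $X\otimes_R X^*$ (expand the second coordinate by the right $R$-module Hilbert basis of $X$ and pull the resulting $R$-scalar across the tensor), and then that expanding the first coordinate by the left $Q$-module Hilbert basis reduces the verification to elements $s\otimes t$ with $s,t\in\bisections_X$. There one computes
\[
\beta^{\sharp}(\langle s,t\rangle)=\bigvee_{t'\in\bisections_X}\langle s,t\rangle\,t'\otimes t'=\bigvee_{t'}s[t,t']\otimes t'=\bigvee_{t'}s\otimes t'[t',t]=s\otimes\bigvee_{t'}t'[t',t]=s\otimes t,
\]
where the second equality is the interchange rule $\langle s,t\rangle u=s[t,u]$ of Theorem~\ref{funct2, lemma: biprincipal-properties} (valid precisely because $X$ is biprincipal), the third is the defining relation $xr\otimes y=x\otimes yr^*$ of the tensor over $R$, and the last is the right $R$-module Hilbert basis identity $t=\bigvee_{t'\in\bisections_X}t'[t',t]$. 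Consequently $\beta$ is a bijective homomorphism of $Q$-$Q$-bimodules with join-preserving inverse, hence a frame isomorphism; so $X\otimes_R X^*$ and $Q$ are isomorphic as $Q$-$Q$-bilocales, and --- the inner products of a bisheaf being determined by its bilocale structure --- as principal $Q$-$Q$-bisheaves, i.e.\ $[X\otimes_R X^*]=\ident_Q$ in $\HSQ$. The dual argument applied to $X^*$ gives $[X^*\otimes_Q X]=\ident_R$, so $X^*$ is a two-sided inverse of $X$, completing \eqref{ME2}$\Rightarrow$\eqref{ME3}.

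The step I expect to be delicate is the identity $\beta^{\sharp}\circ\beta=\ident$: keeping careful track of the dual left and right $R$- and $Q$-actions on $X^*$ while sliding scalars through the balanced tensor, and recognising that the interchange rule is exactly what converts $\langle s,t\rangle t'$ into $s[t,t']$ so that the $R$-valued Hilbert basis of $X$ can collapse the join. Everything else is either purely definitional (the first implication) or a bookkeeping assembly of Theorems~\ref{axyxay}, \ref{thm:prQsh} and \ref{funct2, lemma: biprincipal-properties} together with Corollary~\ref{cor:GHbisheaves}.
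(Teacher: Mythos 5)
Your proposal is correct and follows essentially the same route as the paper: the map $x\otimes y\mapsto\langle x,y\rangle$ on $X\otimes_R X^*$ (well defined by Theorem~\ref{axyxay}, surjective by Corollary~\ref{cor:GHbisheaves}), inverted on generators $s\otimes t$ with $s,t\in\bisections_X$ via the candidate $q\mapsto\V_{t}qt\otimes t$ using the interchange rule of Theorem~\ref{funct2, lemma: biprincipal-properties} and the $R$-valued Hilbert basis identity. The only cosmetic differences are that you verify both composites and obtain the $R$-side by duality, whereas the paper only checks $\eta\circ\varphi=\ident$ (surjectivity already giving bijectivity) and constructs the homomorphism $\psi:X^*\otimes_Q X\to R$ directly.
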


\begin{proof}
The implication \eqref{ME3}$\Rightarrow$\eqref{ME2} is immediate, so let us prove \eqref{ME2}$\Rightarrow$\eqref{ME3}. Let $X$ be a principal $Q$-$R$-bisheaf. The inner product $\langle-,-\rangle:X\times X\to Q$ defines a sup-lattice homomorphism
\[
\varphi:X\otimes_R X^*\to Q
\]
by
$\varphi(x\otimes y)=\langle x,y\rangle$
because by Theorem~\ref{axyxay} we have, for all $x,y\in X$ and $r\in R$,
\[
\varphi(xr\otimes y)=\langle xr,y\rangle = \langle x,yr^*\rangle
=\varphi(x\otimes(r\cdot y))\;.
\]
In addition, $\varphi$ is surjective because by Corollary~\ref{cor:GHbisheaves} $\langle-,-\rangle$ is surjective. And it is both left and right $Q$-equivariant, hence being a homomorphism of $Q$-$Q$-bimodules, as the following derivations with $x,y\in X$ and $a\in Q$ show:
\begin{eqnarray*}
\varphi(ax\otimes y) &=& \langle ax,y\rangle = a\langle x,y\rangle =a\varphi(x\otimes y)\;;\\
\varphi(x\otimes(y\cdot a)) &=& \langle x,a^*y\rangle = \langle x,y\rangle a=\varphi(x\otimes y)a\;.
\end{eqnarray*}
Similarly, there is a surjective homomorphism of $R$-$R$-bimodules
\[
\psi:X^*\otimes_Q X\to R
\]
defined by $\psi(x\otimes y)=[x,y]$ for all $x,y\in X$.
Now let us prove that $\varphi$ is injective. In order to do that we shall find a mapping $\eta:Q\to X\otimes_R X^*$ such that $\eta\circ\varphi=\ident$. Let us define $\eta$ as follows for all $a\in Q$: 
\[
\eta(a):=\V_{s\in \bisections_X} as\otimes s\;.
\]
Let $t,u\in \bisections_X$. Then, using \eqref{bipr}, we have
\begin{align*}
\eta(\varphi(t\otimes u)) &= \V_{s\in\bisections_X} \langle t,u\rangle s\otimes s= \V_{s\in\bisections_X} t[u,s]\otimes s\\
&= \V_{s\in\bisections_X} t\otimes ([u,s]\cdot s) = t\otimes\V_{s\in\bisections_X} s[s,u]=t\otimes u\;.
\end{align*}
Then, since any $\xi\in X\otimes_R X^*$ must be of the form $\V_i t_i\otimes u_i$ for some $t_i,u_i\in\bisections_X$, and noticing that $\eta$ is obviously a sup-lattice homomorphism, we obtain $\eta(\varphi(\xi))=\xi$ for all $\xi\in X\otimes_R X^*$, so $\varphi$ is injective. Similarly one shows that $\psi$ is injective.
\end{proof}

\subsection{Morita equivalence}

Let us define two inverse quantal frames to be \emph{Morita equivalent} if their corresponding \'etale groupoids are Morita equivalent. The following facts are immediate and show that biprincipal bisheaves provide a good notion of ``Morita equivalence bimodule'' in the context of inverse quantal frames:

\begin{corollary}\label{mmain}
Let $Q$ and $R$ be inverse quantal frames. The following conditions are equivalent:
\begin{enumerate}
\item\label{mmain1} $Q$ and $R$ are Morita equivalent;
\item\label{mmain2} There exists a principal $Q$-$R$-bisheaf $X$, a principal $R$-$Q$-bisheaf $Y$, and isomorphisms of $Q$-$Q$-bilocales and $R$-$R$-bilocales such that
$X\otimes_Q Y\cong R$ and $Y\otimes_R X\cong Q$;
\item\label{mmain3} There exists a principal $Q$-$R$-bisheaf $X$ such that $X^*$ is a principal $R$-$Q$-bisheaf, and isomorphisms of $Q$-$Q$-bilocales and $R$-$R$-bilocales such that $X\otimes_Q X^*\cong R$ and $X^*\otimes_R X\cong Q$;
\item\label{mmain4} There exists a biprincipal $Q$-$R$-bisheaf.
\item\label{mmain5} The categories $Q$-$\Sh$ and $R$-$\Sh$, respectively of $Q$-sheaves and $R$-sheaves, are equivalent.
\end{enumerate}   
\end{corollary}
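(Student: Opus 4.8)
The plan is to run the cycle $\eqref{mmain1}\Rightarrow\eqref{mmain4}\Rightarrow\eqref{mmain3}\Rightarrow\eqref{mmain2}\Rightarrow\eqref{mmain1}$ and then treat $\eqref{mmain1}\Leftrightarrow\eqref{mmain5}$ separately, using as ingredients the groupoid-level statement Lemma~\ref{lem:moritaequivdefs}, the equivalence $\HSGsh\simeq\HSQ$, Theorem~\ref{funct2, theo: ME}, and --- for the last equivalence --- the identification of $Q$-$\Sh$ with the classifying topos of the associated groupoid. Throughout, let $G$ and $H$ be \'etale groupoids with $Q\cong\opens(G)$ and $R\cong\opens(H)$, so that by definition $Q$ and $R$ are Morita equivalent exactly when $G$ and $H$ are. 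For $\eqref{mmain1}\Rightarrow\eqref{mmain4}$ I would start from Lemma~\ref{lem:moritaequivdefs}\eqref{lem:moritaequivdefs3}, which supplies a principal $G$-$H$-bibundle $X$ whose inverse in $\HSG$ is $X^*$, and then observe that $X$ is automatically a bisheaf: $X$ is a principal $G$-bundle over $H_0$, so by Lemma~\ref{funct2, lemma: covering} its right projection $X\to H_0$ is a local homeomorphism; and $X^*$ is a principal $H$-bundle over $G_0$, so by the same lemma the projection $X^*\to G_0$ --- which, as a map of locales, is the left projection $X\to G_0$ of $X$ --- is a local homeomorphism. Hence $X$ is a principal $G$-$H$-bisheaf whose dual $X^*$ is a principal $H$-$G$-bisheaf, and transporting along the identification of $G$-sheaves with $Q$-sheaves turns $X$ into a biprincipal $Q$-$R$-bisheaf, which is \eqref{mmain4}.

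The rest of the cycle should be short. If $X$ is a biprincipal $Q$-$R$-bisheaf, then by Theorem~\ref{funct2, theo: ME} it is a principal $Q$-$R$-bisheaf whose inverse in $\HSQ$ is $X^*$; since composition in $\HSQ$ is the tensor product and the identity morphisms in $\HSQ$ are the inverse quantal frames themselves (each regarded as a bisheaf over itself, which is indeed biprincipal), ``inverse in $\HSQ$'' unwinds to exactly the bilocale isomorphisms asserted in \eqref{mmain3}; and $\eqref{mmain3}\Rightarrow\eqref{mmain2}$ is immediate by taking $Y:=X^*$. For $\eqref{mmain2}\Rightarrow\eqref{mmain1}$, the tensor isomorphisms say $X$ and $Y$ are mutually inverse morphisms of $\HSQ$, so transporting along $\HSQ\simeq\HSGsh$ (a subcategory of $\HSG$) makes $G$ and $H$ isomorphic in $\HSG$, hence Morita equivalent by Lemma~\ref{lem:moritaequivdefs}, hence $Q$ and $R$ are Morita equivalent.

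For $\eqref{mmain1}\Leftrightarrow\eqref{mmain5}$ I would invoke that $Q$-$\Sh$ is equivalent to the category of $G$-sheaves, i.e.\ to the classifying topos of $G$, and likewise $R$-$\Sh$ for $H$, together with the classical fact that \'etale groupoids are Morita equivalent if and only if their classifying toposes are equivalent. (One could instead produce the equivalence $Q$-$\Sh\simeq R$-$\Sh$ by tensoring against the biprincipal bisheaf furnished by \eqref{mmain4}, but then one must check that these functors preserve sheaves, which is cleanest through the topos identification.) The step I expect to be the main obstacle is the bisheaf argument in the first paragraph: it requires applying Lemma~\ref{funct2, lemma: covering} to both $X$ and $X^*$ and tracking that the two anchor maps of $X$ swap roles in passing to $X^*$; the only other delicate point is making sure that the abstract identities and composition in $\HSQ$ really match the concrete tensor-product isomorphisms in \eqref{mmain2} and \eqref{mmain3}.
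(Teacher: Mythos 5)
Your proposal is correct and follows essentially the same route as the paper: it derives the equivalence of \eqref{mmain1}--\eqref{mmain3} from Lemma~\ref{lem:moritaequivdefs} together with Lemma~\ref{funct2, lemma: covering} (applied to both projections to see that principal bibundles between \'etale groupoids are automatically bisheaves), identifies \eqref{mmain4} via Theorem~\ref{funct2, theo: ME}, and handles \eqref{mmain5} through the classifying-topos facts that $BG\simeq\opens(G)$-$\Sh$ and that Morita equivalence of \'etale groupoids corresponds to equivalence of classifying toposes. The only difference is the bookkeeping (you run an explicit cycle where the paper clusters the conditions), which does not change the substance.
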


\begin{proof}
The equivalence of conditions \eqref{mmain1}--\eqref{mmain3} follows from Lemma~\ref{lem:moritaequivdefs} and Lemma~\ref{funct2, lemma: covering}, and these are equivalent to \eqref{mmain4} due to Lemma~\ref{funct2, theo: ME}. The equivalence to \eqref{mmain5} is a consequence of the fact that two \'etale groupoids $G$ and $H$ are Morita equivalent if and only if their classifying toposes $BG$ and $BH$ are equivalent~\cite{Moer90}, together with the fact that the classifying topos $BG$ of an \'etale groupoid $G$ is equivalent to $\opens(G)$-$\Sh$~\cite{GSQS}.
\end{proof}

These results yield yet another equivalent definition of Morita equivalence of \'etale groupoids (adding to those of Lemma~\ref{lem:moritaequivdefs}), agreeing with the fact that the existence of a biprincipal bibundle (or equivalent variants of this) is often taken as a definition of Morita equivalence for topological groupoids~\cites{Meyer97,MRW87}:

\begin{corollary}
Let $G$ and $H$ be \'etale groupoids. The following conditions are equivalent:
\begin{enumerate}
\item $G$ and $H$ are Morita equivalent;
\item There exists a \emph{biprincipal $G$-$H$-bibundle} (a principal $G$-$H$-bibundle which is also principal as an $H$-bundle).
\end{enumerate}
\end{corollary}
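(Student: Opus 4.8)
The plan is to reduce the corollary to Corollary~\ref{mmain} via the dictionary between \'etale groupoids and inverse quantal frames. Put $Q:=\opens(G)$ and $R:=\opens(H)$. By definition, $G$ and $H$ are Morita equivalent precisely when $Q$ and $R$ are, and by Corollary~\ref{mmain} (the equivalence \eqref{mmain1}$\Leftrightarrow$\eqref{mmain4}) this happens precisely when there exists a biprincipal $Q$-$R$-bisheaf. So everything comes down to showing that biprincipal $Q$-$R$-bisheaves are, up to the groupoid/quantale translation, the same data as biprincipal $G$-$H$-bibundles in the sense of condition~(2).

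First I would invoke the isomorphism $G$-$H$-$\Loc\cong Q$-$R$-$\Loc$: it restricts to a bijective correspondence between $G$-$H$-bisheaves and $Q$-$R$-bisheaves, and, by the module-theoretic characterisations of principality (Theorem~\ref{thm:prQsh}, together with its bilocale version Theorem~\ref{GHbisheaves}, and the isomorphism of categories), it matches principal $G$-$H$-bisheaves with principal $Q$-$R$-bisheaves; applying the same on the right-hand ($H$-, resp.\ $R$-) side, it matches biprincipal $G$-$H$-bisheaves (principal $G$-$H$-bisheaves that are also principal as $H$-sheaves) with biprincipal $Q$-$R$-bisheaves. Thus it remains only to check that ``biprincipal $G$-$H$-bisheaf'' and ``biprincipal $G$-$H$-bibundle'' denote the same objects.

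This last identification is where the one genuine (though small) point lies. A biprincipal $G$-$H$-bibundle is a $G$-$H$-bilocale $X$ that is a principal $G$-bundle over $H_0$ and, symmetrically, a principal $H$-bundle over $G_0$. Applying Lemma~\ref{funct2, lemma: covering} to the principal $G$-bundle $q:X\to H_0$ (recall $H_0\cong X/G$ by Lemma~\ref{McongX/G}) shows that $q$ is a local homeomorphism; applying it symmetrically to the principal $H$-bundle $p:X\to G_0$ shows that $p$ is a local homeomorphism. Hence $X$ is a $G$-$H$-bisheaf, and by the definitions it is then a principal $G$-$H$-bisheaf that is moreover principal as an $H$-sheaf, i.e.\ a biprincipal $G$-$H$-bisheaf. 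Conversely, a biprincipal $G$-$H$-bisheaf is a $G$-$H$-bilocale with both anchor maps local homeomorphisms which is principal for both the $G$- and the $H$-action, hence a biprincipal $G$-$H$-bibundle. Chaining the three equivalences yields the statement.

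The main obstacle is exactly this last bookkeeping step: one must notice that the word ``bibundle'' in condition~(2) carries no sheaf hypothesis a priori, so one has to observe that biprincipality already forces both anchor maps to be local homeomorphisms. But that is precisely the content of Lemma~\ref{funct2, lemma: covering}, so no real difficulty arises; everything else is a direct application of Corollary~\ref{mmain} and the groupoid/quantale dictionary.
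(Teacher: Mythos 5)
Your proposal is correct and follows essentially the same route the paper intends: the corollary is stated there without an explicit proof precisely because it is meant to follow from Corollary~\ref{mmain} (the equivalence \eqref{mmain1}$\Leftrightarrow$\eqref{mmain4}) via the groupoid/quantale dictionary, together with the observation you make that Lemma~\ref{funct2, lemma: covering} (applied on both sides, with Lemma~\ref{McongX/G}) forces both anchor maps of a biprincipal bibundle to be local homeomorphisms, so that biprincipal $G$-$H$-bibundles and biprincipal $\opens(G)$-$\opens(H)$-bisheaves are the same objects. Your only interpretive choice --- reading ``principal as an $H$-bundle'' as principality over $G_0$ via the left anchor $p$ --- is the intended one (it is the reading under which the statement is true and matches Corollary~\ref{cor:GHbisheaves}), so there is no gap.
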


Biprincipal bisheaves provide a bimodule based definition of Morita equivalence for structures to which inverse quantal frames are naturally associated. One incarnation of this idea is, of course, that two \'etale groupoids $G$ and $H$ are Morita equivalent if and only if there is a biprincipal $\opens(G)$-$\opens(H)$-bisheaf. In the same spirit, our results may have applications to pseudogroups via the equivalence of categories that to each pseudogroup $S$ associates its inverse quantal frame $\lcc(S)$~\cite{Re07}. The Morita theory of inverse semigroups has been extensively studied~\cite{FLS11}, but pseudogroups carry more topological information than general inverse semigroups, in particular because their idempotents form locales, and this should be taken in account. It is natural to define two pseudogroups $S$ and $T$ to be Morita equivalent if the inverse quantal frames $\lcc(S)$ and $\lcc(T)$ are. We shall not pursue this in this paper, except to note that some simplifications of presentation arise from the fact that the inner products of local bisections of a biprincipal $\lcc(S)$-$\lcc(T)$-bisheaf are partial units of the quantales, which means that we can take the inner products to be valued in the pseudogroups themselves, leading to a definition that is closely related to the equivalence bimodules of~\cite{Steinberg-morita}.

\begin{bibdiv}

\begin{biblist}

\bib{Benabou}{article}{
  author={B{\'e}nabou, Jean},
  title={Introduction to bicategories},
  conference={ title={Reports of the Midwest Category Seminar}, },
  book={ publisher={Springer}, place={Berlin}, },
  date={1967},
  pages={1--77},
  review={\MR {0220789 (36 \#3841)}},
}

\bib{BorVit}{article}{
  author={Borceux, Francis},
  author={Vitale, Enrico M.},
  title={A Morita theorem in topology},
  note={V International Meeting on Topology in Italy (Italian) (Lecce, 1990/Otranto, 1990)},
  journal={Rend. Circ. Mat. Palermo (2) Suppl.},
  number={29},
  date={1992},
  pages={353--362},
  review={\MR {1197179}},
}

\bib{Bun08}{article}{
  author={Buneci, M{\u {a}}d{\u {a}}lina Roxana},
  title={Groupoid categories},
  conference={ title={Perspectives in operator algebras and mathematical physics}, },
  book={ series={Theta Ser. Adv. Math.}, volume={8}, publisher={Theta, Bucharest}, },
  date={2008},
  pages={27--40},
  review={\MR {2433025 (2010b:22007)}},
}

\bib{BS05}{article}{
  author={Buneci, M{\u {a}}d{\u {a}}lina Roxana},
  author={Stachura, Piotr},
  title={Morphisms of locally compact groupoids endowed with Haar systems},
  eprint={arXiv:math.OA/0511613v1},
  date={2005},
}

\bib{Bunge}{article}{
  author={Bunge, Marta},
  title={An application of descent to a classification theorem for toposes},
  journal={Math. Proc. Cambridge Philos. Soc.},
  volume={107},
  date={1990},
  number={1},
  pages={59--79},
  issn={0305-0041},
  review={\MR {1021873 (90k:18002)}},
  doi={10.1017/S0305004100068365},
}

\bib{FLS11}{article}{
  author={Funk, J.},
  author={Lawson, M. V.},
  author={Steinberg, B.},
  title={Characterizations of Morita equivalent inverse semigroups},
  journal={J. Pure Appl. Algebra},
  volume={215},
  date={2011},
  number={9},
  pages={2262--2279},
  issn={0022-4049},
  review={\MR {2786616}},
}

\bib{HS87}{article}{
  author={Hilsum, Michel},
  author={Skandalis, Georges},
  title={Morphismes $K$-orient\'es d'espaces de feuilles et fonctorialit\'e en th\'eorie de Kasparov (d'apr\`es une conjecture d'A. Connes)},
  language={French, with English summary},
  journal={Ann. Sci. \'Ecole Norm. Sup. (4)},
  volume={20},
  date={1987},
  number={3},
  pages={325--390},
  issn={0012-9593},
  review={\MR {925720 (90a:58169)}},
}

\bib{stonespaces}{book}{
  author={Johnstone, Peter T.},
  title={Stone Spaces},
  series={Cambridge Studies in Advanced Mathematics},
  volume={3},
  note={Reprint of the 1982 edition},
  publisher={Cambridge University Press},
  place={Cambridge},
  date={1986},
  pages={xxii+370},
  isbn={0-521-33779-8},
  review={\MR {861951 (87m:54001)}},
}

\bib{elephant}{book}{
  author={Johnstone, Peter T.},
  title={Sketches of an Elephant: A Topos Theory Compendium. Vol. 2},
  series={Oxford Logic Guides},
  volume={44},
  publisher={The Clarendon Press Oxford University Press},
  place={Oxford},
  date={2002},
  pages={i--xxii, 469--1089 and I1--I71},
  isbn={0-19-851598-7},
  review={\MR {2063092 (2005g:18007)}},
}

\bib{JT}{article}{
  author={Joyal, Andr{\'e}},
  author={Tierney, Myles},
  title={An extension of the Galois theory of Grothendieck},
  journal={Mem. Amer. Math. Soc.},
  volume={51},
  date={1984},
  number={309},
  pages={vii+71},
  issn={0065-9266},
  review={\MR {756176 (86d:18002)}},
}

\bib{La01}{article}{
  author={Landsman, N. P.},
  title={Operator algebras and Poisson manifolds associated to groupoids},
  journal={Comm. Math. Phys.},
  volume={222},
  date={2001},
  number={1},
  pages={97--116},
  issn={0010-3616},
  review={\MR {1853865 (2002f:46142)}},
  doi={10.1007/s002200100496},
}

\bib{LL}{article}{
  author={Lawson, Mark V.},
  author={Lenz, Daniel H.},
  title={Pseudogroups and their \'etale groupoids},
  journal={Adv. Math.},
  volume={244},
  date={2013},
  pages={117--170},
  issn={0001-8708},
  review={\MR {3077869}},
  doi={10.1016/j.aim.2013.04.022},
}

\bib{maclane}{book}{
  author={Mac Lane, Saunders},
  title={Categories for the working mathematician},
  series={Graduate Texts in Mathematics},
  volume={5},
  edition={2},
  publisher={Springer-Verlag, New York},
  date={1998},
  pages={xii+314},
  isbn={0-387-98403-8},
  review={\MR {1712872 (2001j:18001)}},
}

\bib{MarcR}{article}{
   author={Marcelino, S{\'e}rgio},
   author={Resende, Pedro},
   title={An algebraic generalization of Kripke structures},
   journal={Math. Proc. Cambridge Philos. Soc.},
   volume={145},
   date={2008},
   number={3},
   pages={549--577},
   issn={0305-0041},
   review={\MR{2464775}},
}

\bib{Meyer97}{unpublished}{
  author={Meyer, Ralf},
  title={Morita equivalence in algebra and geometry},
  date={1997},
  note={Available at nLab: https://ncatlab.org/nlab/files/MeyerMoritaEquivalence.pdf},
}

\bib{MeyerZhu}{article}{
  author={Meyer, Ralf},
  author={Zhu, Chenchang},
  title={Groupoids in categories with pretopology},
  journal={Theory Appl. Categ.},
  volume={30},
  date={2015},
  pages={Paper No. 55, 1906--1998},
  issn={1201-561X},
  review={\MR {3438234}},
}

\bib{Moer90}{article}{
  author={Moerdijk, Ieke},
  title={The classifying topos of a continuous groupoid. II},
  language={English, with French summary},
  journal={Cahiers Topologie G\'eom. Diff\'erentielle Cat\'eg.},
  volume={31},
  date={1990},
  number={2},
  pages={137--168},
  issn={0008-0004},
  review={\MR {1080241 (92c:18003)}},
}

\bib{Moer87}{article}{
  author={Moerdijk, Ieke},
  title={Toposes and groupoids},
  conference={ title={Categorical algebra and its applications}, address={Louvain-La-Neuve}, date={1987}, },
  book={ series={Lecture Notes in Math.}, volume={1348}, publisher={Springer}, place={Berlin}, },
  date={1988},
  pages={280--298},
  review={\MR {975977 (89m:18003)}},
  doi={10.1007/BFb0081366},
}

\bib{Mr99}{article}{
  author={Mr{\v {c}}un, Janez},
  title={Functoriality of the bimodule associated to a Hilsum--Skandalis map},
  journal={$K$-Theory},
  volume={18},
  date={1999},
  number={3},
  pages={235--253},
  issn={0920-3036},
  review={\MR {1722796 (2001k:22004)}},
  doi={10.1023/A:1007773511327},
}

\bib{MrcunPhD}{thesis}{
  author={Mr{\v {c}}un, Janez},
  title={Stability and Invariants of Hilsum--Skandalis Maps},
  institution={Universiteit Utrecht, Faculteit Wiskunde en Informatica},
  year={1996},
  type={PhD Thesis},
  eprint={arXiv:math.DG/0506484v1},
}

\bib{MRW87}{article}{
  author={Muhly, Paul S.},
  author={Renault, Jean N.},
  author={Williams, Dana P.},
  title={Equivalence and isomorphism for groupoid $C^\ast $-algebras},
  journal={J. Operator Theory},
  volume={17},
  date={1987},
  number={1},
  pages={3--22},
  issn={0379-4024},
  review={\MR {873460 (88h:46123)}},
}

\bib{Paseka}{article}{
  author={Paseka, Jan},
  title={Hilbert $Q$-modules and nuclear ideals in the category of $\bigvee $-semilattices with a duality},
  conference={ title={CTCS '99: Conference on Category Theory and Computer Science (Edinburgh)}, },
  book={ series={Electron. Notes Theor. Comput. Sci.}, volume={29}, publisher={Elsevier}, place={Amsterdam}, },
  date={1999},
  pages={Paper No. 29019, 19 pp. (electronic)},
  review={\MR {1782793 (2001k:03143)}},
}

\bib{Paseka3}{article}{
  author={Paseka, Jan},
  title={Morita equivalence in the context of Hilbert modules},
  conference={ title={Simon, Petr (ed.), Proceedings of the 9th Prague topological symposium}, address={Prague, Czech Republic}, date={2001}, },
  pages={223--251},
}

\bib{Re07}{article}{
  author={Resende, Pedro},
  title={\'Etale groupoids and their quantales},
  journal={Adv. Math.},
  volume={208},
  date={2007},
  number={1},
  pages={147--209},
  issn={0001-8708},
  review={\MR {2304314 (2008c:22002)}},
}

\bib{GSQS}{article}{
  author={Resende, Pedro},
  title={Groupoid sheaves as quantale sheaves},
  journal={J. Pure Appl. Algebra},
  volume={216},
  date={2012},
  number={1},
  pages={41--70},
  issn={0022-4049},
  review={\MR {2826418}},
  doi={10.1016/j.jpaa.2011.05.002},
}

\bib{Re15}{article}{
  author={Resende, Pedro},
  title={Functoriality of groupoid quantales. I},
  journal={J. Pure Appl. Algebra},
  volume={219},
  date={2015},
  number={8},
  pages={3089--3109},
  issn={0022-4049},
  review={\MR {3320209}},
  doi={10.1016/j.jpaa.2014.10.004},
}

\bib{SGQ}{article}{
  author={Resende, Pedro},
  title={The many groupoids of a stably Gelfand quantale},
  journal={J. Algebra},
  volume={498},
  date={2018},
  pages={197--210},
  doi={10.1016/j.jalgebra.2017.11.042},
}

\bib{RR}{article}{
  author={Resende, Pedro},
  author={Rodrigues, Elias},
  title={Sheaves as modules},
  journal={Appl. Categ. Structures},
  volume={18},
  date={2010},
  number={2},
  pages={199--217},
  issn={0927-2852},
  review={\MR {2601963}},
  doi={10.1007/s10485-008-9131-x (on-line 2008)},
}

\bib{Steinberg-morita}{article}{
  author={Steinberg, Benjamin},
  title={Strong Morita equivalence of inverse semigroups},
  journal={Houston J. Math.},
  volume={37},
  date={2011},
  number={3},
  pages={895--927},
  issn={0362-1588},
  review={\MR {2844456}},
}

\bib{Vi07}{article}{
  author={Vickers, Steven},
  title={Locales and toposes as spaces},
  conference={ title={Handbook of spatial logics}, },
  book={ publisher={Springer, Dordrecht}, },
  date={2007},
  pages={429--496},
  review={\MR {2393892}},
}

\end{biblist}

\end{bibdiv}

\vspace*{5mm}
\noindent {\sc
Centro de An\'alise Matem\'atica, Geometria e Sistemas Din\^amicos
Departamento de Matem\'{a}tica, Instituto Superior T\'{e}cnico\\
Universidade T\'{e}cnica de Lisboa\\
Av.\ Rovisco Pais 1, 1049-001 Lisboa, Portugal}\\
{\it E-mail:} {\sf quijano.juanpablo@gmail.com}, {\sf pmr@math.tecnico.ulisboa.pt}

\end{document}